\newtheorem{theorem}{Theorem}[section]
\newtheorem{corollary}[theorem]{Corollary}
\newtheorem{proposition}[theorem]{Proposition}
\newtheorem{lemma}[theorem]{Lemma}
\newtheorem*{theorem*}{Theorem}
\theoremstyle{definition}
\newtheorem{definition}[theorem]{Definition}
\newtheorem{remark}[theorem]{Remark}
\renewenvironment{proof}[1][Proof]{\noindent\textit{#1.} }{\hfill 
	\rule{0.5em}{0.5em}}
\newcommand{\fop}{{\hfill 
		\rule{0.5em}{0.5em}}}
\numberwithin{equation}{section}
\newcommand{\cA}{\mathcal{A}}
\newcommand{\cM}{\mathcal{M}}
\newcommand{\cN}{\mathcal{N}}
\newcommand{\fN}{\mathfrak N}
\newcommand{\cW}{\mathcal{W}}
\newcommand{\cE}{\mathcal{E}}
\newcommand{\cF}{\mathcal{F}}
\newcommand{\cG}{\mathcal{G}}
\newcommand{\cH}{\mathcal{H}}
\newcommand{\N}{\mathbb{N}}
\newcommand{\R}{\mathbb{R}}
\newcommand{\Rd}{{\mathbb{R}^d}}
\newcommand{\Z}{\mathbb Z}
\newcommand{\E}{\mathbb E}
\newcommand{\Pp}{\mathbb P}
\newcommand{\1}{\mathbbm 1}
\newcommand{\I}{{\rm I}}
\newcommand{\II}{{\rm II}}
\def\nn{\nonumber}
\newcommand{\switch}{\epsilon}
\newcommand{\HHq}[2]{\big(\mathbf H_{#1}^{#2}\big)}
\newcommand{\Tj}[2]{\sum_{j={#1}}^{#2} \lengthR_j}
\newcommand{\TT}[2]{\mathcal N({#1},{#2})}
\newcommand{\gj}[2]{\sum_{j={#1}}^{#2} \gamma^j}
\newcommand{\gam}[2]{\Upsilon({#1},{#2})}
\newcommand{\lengthR}{n}
\newcommand\EE{\mathcal {E}}
\newcommand\FF{\mathcal {F}}
\newcommand{\lC}{{\underline{c}}}
\newcommand{\uC}{{\overline{C}}}
\newcommand{\la}{{\underline{\alpha}}}
\newcommand{\ua}{{\overline{\alpha}}}
\newcommand{\newa}{\la}%%estimates_of_the_borderline_q
\newcommand{\newb}{\frac{\newa}{\newa+1}}%%\trac{1}{1+\newa}+\newb=1
\newcommand{\newA}{\frac{\la+1}{\ua+1}}%%
\author{Kyung-Youn Kim}
\email{kykim@gate.sinica.edu.tw}
\address{Institute of Mathematics, Academia Sinica, Taiwan}
\author{Lidan Wang}
\email{lidanw.math@gmail.com}
\address{School of Statistics and Data Science, Nankai University, Tianjin, P.R. China}
\title[Heat kernel bounds for systems of nonlocal equations]
{Heat kernel bounds for a large class of Markov process with singular jump}
\thanks{LW's Research partially supported by NNSF of China(Grant No. 11801283) and Key Laboratory for Medical Data Analysis and Statistical Research of Tianjin (KLMDASR)} 
\subjclass[2020]{Primary 60J76; Secondary 35K08}
\keywords{Markov jump process, heat kernel, integro-differential operator}
\begin{document}

	\begin{abstract}
	Let $Z=(Z^{1}, \ldots, Z^{d})$  be the $d$-dimensional L\'evy processes where $Z^{i}$'s are independent $1$-dimensional L\'evy processes with jump kernel $J^{\phi, 1}(u,w) =|u-w|^{-1}\phi(|u-w|)^{-1}$ for $u, w\in \R$. Here $\phi$ is an increasing function with weak scaling condition of order $\underline \alpha, \overline \alpha\in (0, 2)$. 
	Let $J(x,y) \asymp  J^\phi (x,y)$ be the symmetric measurable function where
\begin{align*}
J^\phi(x,y):=\begin{cases}
J^{\phi, 1}(x^i, y^i)\qquad&\text{ if  $x^i \ne y^i$ for some $i$ and $x^j = y^j$ for all $j \ne i$}\\
0\qquad&\text{ if $x^i \ne y^i$ for more than one index $i$.}
\end{cases}
\end{align*}
Corresponding to the jump kernel $J$, we show the existence of non-isotropic Markov processes $X:=(X^{1}, \ldots, X^{d})$ and obtain  sharp two-sided heat kernel estimates for the transition density functions.

	\end{abstract}
	
	\maketitle
\section{Introduction}
Suppose that $X$ is a symmetric Markov process in $\R^d$, with transition density $p(t,x,y)$ and generator $\mathcal L$. It is known that $p(t,x,y)$ is the fundamental solution to $\partial_t u=\mathcal L u$, hence it is also called the heat kernel of $\mathcal L$. Since most heat kernels do not have explicit expressions, establishing sharp two-sided heat kernel estimate is a fundamental problem and it has received  intensive attention in the theory of analysis as well as that of probability. Correspondingly, there are studies of regularities and potential properties  for diverse types of Markov processes.
In this paper, we consider a large class of non-isotropic pure jump Markov processes and investigate the extended version of the conjecture formulated in \cite{KKKpre} as follows:
\medskip

{\bf Conjecture:} \emph{Let $L_t$ be a L\'evy process(a non-degenerate $\alpha$-stable process) in $\mathbb{R}^d$ with L\'{e}vy measure $\mu$. Let $M_t$ be a symmetric Markov process whose Dirichlet form has a symmetric jump intensity $j(x, d y)$ that is comparable to the one of $L_t$, i.e., $j(x, d y) \asymp \mu(x-d y)$.  Then the heat kernel of $M_t$ is comparable to the one of $L_t$.}
\medskip

We use the notation $f\asymp g$ if the quotient $f/g$ is comparable to some positive constants.

\medskip
There is a long history of studies on the above conjecture. It is shown in \cite{Aro68} that the uniformly elliptic operator  in divergence form, $\cA:=\sum\partial_i(a_{i, j}(x) \partial_j)$, is related to  the Laplacian $-\Delta$,  that is, the fundamental solutions of $\partial_t u=\cA u$ and $\partial_t u=-\Delta u$ are comparable.
In \cite{BaLe02} and \cite{ChKu03},  analogous results are obtained for  non-local operators related to $\alpha$-stable processes and non-degenerate jump processes with the jump kernel $j(x, y)\asymp |x-y|^{-d-\alpha}$ for $\alpha\in (0, 2)$.
Very recently, there are researches on the non-isotropic case by \cite{Xu13, KKKpre} with the L\'evy measure
$\mu(dh):=\sum\limits_{i=1}^d |h^i|^{-1-\alpha} dh^i \prod\limits_{j \ne i} \delta_{\{0\}}(dh^j)$
and the jump intensity $j(x, dy)\asymp \mu(x-dy)$. Here, $\delta_{\{0\}}$ is the Dirac measure at $\{0\}$. 

In \cite{ChKu08},  Chen-Kumagai  introduce a large class of non-local symmetric Markov processes of variable order using an increasing function with weak scaling conditions. 
Motivated by their research, we consider the following non-isotropic Markov processes.

For any $0<\la\le \ua<2$, let $\phi:[0, \infty)\rightarrow[0, \infty)$ be an increasing function with the following condition:
there exist positive constants $\lC\le 1$ and $\uC\ge 1$ such that 
\begin{align*}
{\bf(WS)}
\qquad\qquad	
\lC\left(\frac{R}{r}\right)^{\la}\le \frac{\phi(R)}{\phi(r)}\le \uC \left(\frac{R}{r}\right)^{\ua} \qquad \mbox{ for }\,\, 0<r\, \le  R.
\end{align*}
Using this $\phi$,  define
${\nu^1}(r):={(r \phi(r))}^{-1}\mbox{ for}\,\, r>0$.
Then {\bf(WS)} implies 
$$\int_{\R}(1\wedge |s|^2){\nu^1}(|s|)d s\le c \left(\int_0^{1}r^{-\ua+1} d r+ \int_{1}^{\infty} r^{-\la-1} \right)d r<\infty,$$
so ${\nu^1}(d s):={\nu^1}(|s|)d s$ is a L\'{e}vy measure.
Consider a non-isotropic L\'evy process $Z:=(Z^{1}, \ldots, Z^{d})$ where each coordinate process $Z^{i}$ is an independent $1$-dimensional symmetric L\'evy process with L\'evy measure ${\nu^1}(d s)$.
Then the corresponding L\'evy measure $\nu$ of $Z$ is represented by
\begin{align*}
\nu(d h) = \sum\limits_{i=1}^d {\nu^1}(|h^i|) d h^i \prod\limits_{j \ne i} \delta_{\{0\}}(d h^j).
\end{align*}
Intuitively, $\nu$ only measures  the sets containing the line which is parallel to one of the coordinate axes.
The corresponding Dirichlet form $(\mathcal{E}^\phi,  \mathcal F^\phi)$ on $L^2(\R^d)$ is given by 
\begin{align}
\label{Diriphi}
\mathcal{E}^\phi (u,v) &=\int_{\Rd}\Big(\sum_{i=1}^d\int_{\R}\big(u(x+e^i \tau) - u(x)\big)\big(v(x+e^i \tau) -  v(x)\big) J^\phi (x, x+e^i \tau) d \tau \Big)d x,\notag \\
\mathcal F^\phi &=\{u\in L^2(\Rd)| \mathcal{E}^\phi (u,u)<\infty\},
\end{align}
with the jump kernel
\begin{align*}
J^\phi (x, y):=
\begin{cases}
|x^i-y^i|^{-1}\phi(|x^i-y^i|)^{-1} \ &
\text {if $x^i \ne y^i$ for some $i$; $x^j = y^j$ for all $j \ne i$,}\\
0\,&\text{otherwise}.
\end{cases}
\end{align*}
By \cite[Theorem 1.2]{ChKu08},  the transition density $p^{Z^{i}}(t, x^i, y^i)$ of $Z^{i}$ has the following estimates
$$p^{Z^{i}}(t, x^i, y^i)\asymp \left( [\phi^{-1}(t)]^{-1}\wedge t{\nu^1}(|x^i-y^i|)\right),$$
where $a\wedge b:=\min\{a,b\}$. 
Since $Z^{i}$'s are independent, it is easy to obtain the upper and lower bounds for the transition density $p^Z(t, x, y)$ of $Z$, that is,
\begin{align}\label{eq:hkez}
p^Z(t, x, y)\asymp \prod_{i=1}^d\left( [\phi^{-1}(t)]^{-1}\wedge t{\nu^1}(|x^i-y^i|)\right).
\end{align}
Now we consider a symmetric measurable function $J$ on $\R^d \times \R^d \setminus \operatorname{diag}$ such that for every $(x, y)\in \R^d \times \R^d \setminus \operatorname{diag}$,
\begin{align}\label{eq:J-ellipticity-assum}
\Lambda^{-1} J^\phi (x,y) \leq J(x,y) \leq \Lambda J^\phi (x,y),
\end{align}
for some $\Lambda>1$, and define $(\mathcal{E}, \FF)$ on $L^2(\Rd)$ as follows: 
\begin{align}\label{eq:df}
\mathcal{E} (u,v)&
:=\int_{\Rd}\Big(\sum_{i=1}^d\int_{\R}\big(u(x+e^i \tau) - u(x)\big)\big(v(x+e^i \tau) -  v(x)\big) J (x,x+e^i \tau) d \tau \Big) d x \,,\nn\\
\mathcal F&:=\{u\in L^2(\Rd)| \; \mathcal{E}(u,u)<\infty\}.
\end{align}
Then we  state our main result of this paper.
\begin{theorem}
	\label{mainthm}
	Assume the condition \eqref{eq:J-ellipticity-assum} holds. Then there exists a conservative Feller process $X=(X^{1}, \ldots, X^{d})$ associated with $(\cE, \cF)$ that starts from every point in $\R^d$. Moreover, $X$ has a jointly continuous transition density function $p(t,x,y)$ on $\R_+\times\R^d\times\R^d$, which enjoys the following estimates: there exist positive constants $c_1, c_2>0$ such that for any $t>0, x,y\in\R^d$, 
	\begin{align}\label{eq:hke}
c_1 [\phi^{-1}(t)]^{-d}\prod_{i=1}^d&\left(1\wedge \frac{t\phi^{-1}(t)}{|x^i-y^i|\phi(|x^i-y^i|)}\right)\nn\\
	&
	\le p(t,x,y)\le c_2	[\phi^{-1}(t)]^{-d}\prod_{i=1}^d\left(1\wedge \frac{t\phi^{-1}(t)}{|x^i-y^i|\phi(|x^i-y^i|)}\right).
	\end{align}
\end{theorem}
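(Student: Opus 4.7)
By \eqref{eq:J-ellipticity-assum}, $\mathcal{E}(u,u) \asymp \mathcal{E}^\phi(u,u)$, so $\mathcal{F} = \mathcal{F}^\phi$; since $(\mathcal{E}^\phi, \mathcal{F}^\phi)$ is a regular Dirichlet form on $L^2(\mathbb{R}^d)$ (being that of the product L\'evy process $Z$, with $C_c^\infty(\mathbb{R}^d)$ a core), regularity transfers to $(\mathcal{E}, \mathcal{F})$ and Fukushima's theorem produces a symmetric Hunt process $X$, with conservativeness following from the standard integrability of $\nu$ together with \eqref{eq:J-ellipticity-assum}. The estimate \eqref{eq:hkez} for $Z$ is equivalent, via the Carlen--Kusuoka--Stroock correspondence, to a Nash-type inequality for $\mathcal{E}^\phi$ whose rate is determined by $\phi$; comparability of the two forms transports this inequality to $\mathcal{E}$ and yields the on-diagonal upper bound $p(t,x,y) \leq C[\phi^{-1}(t)]^{-d}$.

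\textbf{Off-diagonal upper bound.} For the off-diagonal direction I would apply Meyer's decomposition $J = J_\rho + (J - J_\rho)$, truncating jumps at a coordinatewise scale $\rho$ chosen in terms of $t$ and $\max_i |x^i - y^i|$, and control the truncated semigroup by Davies' exponential perturbation with a test function of product form $\psi(x) = \sum_{i=1}^d \psi_i(x^i)$. Because jumps occur only along coordinate axes, the Davies distance decomposes as a sum of coordinatewise contributions, so optimizing each $\psi_i$ separately against $|x^i - y^i|$ produces the factor $\prod_i(1 \wedge t\phi^{-1}(t)/(|x^i - y^i|\phi(|x^i - y^i|)))$ for the small-jump heat kernel $p_\rho$. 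The contribution of the untruncated part is then handled by iterating the L\'evy-system inequality
\begin{align*}
p(t,x,y) \leq p_\rho(t,x,y) + \int_0^t\!\!\int\!\!\int p_\rho(s,x,z)(J - J_\rho)(z,w)\,p(t-s,w,y)\,dw\,dz\,ds,
\end{align*}
whose right-hand side again respects the axes since $J - J_\rho$ is itself axis-supported.

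\textbf{Lower bound and main obstacle.} The on-diagonal upper bound, combined with H\"older regularity of caloric functions (obtained from a weak parabolic Harnack inequality adapted to the coordinate-singular jump kernel), yields the near-diagonal bound $p(t,x,y) \geq c[\phi^{-1}(t)]^{-d}$ whenever $\max_i |x^i - y^i| \leq \eta\phi^{-1}(t)$. For arbitrary $x$ and $y$ I would then chain through $d$ intermediate points $x = z_0, z_1, \ldots, z_d = y$, where $z_k$ differs from $z_{k-1}$ only in its $k$-th coordinate by $y^k - x^k$; on each of $d$ sub-intervals of length $t/d$ the process is required to remain near $z_{k-1}$, perform one axis-parallel jump of size $|y^k - x^k|$ with intensity bounded below by $\nu^1(|y^k - x^k|)$ via \eqref{eq:J-ellipticity-assum}, and then remain near $z_k$; the L\'evy system formula and the near-diagonal lower bound on each leg yield the product lower bound in \eqref{eq:hke}. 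The principal difficulty throughout is that $J$ is not separable, so $X$ is not a product of one-dimensional processes, yet its heat kernel must factorize coordinatewise. This factorization is forced by the axis-supported nature of $\nu$, and care is required to ensure that every intermediate state $z_k$ lies in the near-diagonal regime with respect to \emph{all} coordinates simultaneously, not only the one being advanced in that leg.
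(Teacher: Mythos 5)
Your construction/Dirichlet-form step and on-diagonal bound via Nash agree in substance with \autoref{subsec:Dirichlet}--\ref{subsec:Nash}, and your lower-bound chaining through $d$ axis-parallel intermediate points is essentially the paper's argument (the points $\xi_{(0)},\ldots,\xi_{(d)}$ together with the support theorem and near-diagonal estimate). The problem is the off-diagonal upper bound.

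You propose Meyer's decomposition combined with Davies' exponential perturbation, but the paper states explicitly that Meyer's decomposition does not work here, and for good reason. The difficulty is that the target bound is a \emph{product} over $d$ coordinates of factors $\big(1 \wedge t\phi^{-1}(t)/(|x^i-y^i|\phi(|x^i-y^i|))\big)$. Since the jump measure is supported on the coordinate axes, a single large jump advances only one coordinate; to move all $d$ coordinates far from the diagonal, the process must perform at least $d$ separate big jumps, one per direction. The first Duhamel term $\int_0^t\!\int\!\int p_\rho(s,x,z)(J-J_\rho)(z,w)p(t-s,w,y)$ accounts for only one such jump, and its natural bound has a single factor $t\,\nu^1(|x^{i}-y^{i}|)$ in one coordinate; the remaining $d-1$ factors would have to come from the recursively appearing $p(t-s,w,y)$, which is exactly the unknown quantity being estimated. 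Iterating once per coordinate does not close because one must simultaneously control the uniform constant after $d$ iterations and the combinatorics of which coordinate is advanced when, and one has no a priori factorized input. Likewise, a Davies bound for $p_\rho$ with a coordinatewise-sum test function $\psi(x)=\sum_i\psi_i(x^i)$ yields a \emph{product of exponentials} $\exp(-\sum_i(\psi_i(x^i)-\psi_i(y^i))+ct\Gamma(\psi))$, not a product of polynomial tails; in the isotropic setting one converts the exponential bound for $p_\rho$ into the single polynomial tail of $p$ by a one-jump Duhamel step, but there is no analogous conversion here to a $d$-fold polynomial product.

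The paper's replacement is a bootstrap argument rather than a one-shot decomposition. It introduces a family of intermediate hypotheses $\HHq{q}{l}$ (a polynomial bound with exponent $q\in[0,1]$ in $d-l$ coordinates and exponent $1$ in the remaining $l$), proves $\HHq{0}{0}$ from the on-diagonal bound, and then iteratively increases $q$ by a fixed $\theta_l>0$ (\autoref{lem:Gl}), promotes $l\to l+1$ once $q$ crosses $1/(1+\underline\alpha)$ (\autoref{lem:Fl}), and finishes at $\HHq{1}{d-1}$. The engine of each step is the Barlow--Grigor'yan--Kumagai exit-time lemma (\autoref{lem:LBGK}) together with the technical \autoref{prop:main}, which bounds $\E^x[\1_{\{\tau\le t/2\}}P_{t-\tau}f(X_\tau)]$ by decomposing the exit position $X_\tau$ into dyadic annuli shifted to $y_0$ and tracking which coordinate the exiting jump occurred in. This is a genuinely different mechanism: it never estimates a truncated heat kernel directly and never uses Davies' method, and the self-improvement of the exponent $q$ is what produces the coordinatewise product at the end. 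You should replace the Meyer/Davies sketch with such an induction (or at least justify why your iteration terminates with the correct product), as the direct route you describe does not yield the factorized bound.
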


Comparing \eqref{eq:hke} with \eqref{eq:hkez}, we give the answer that the {\bf Conjecture}  holds for a large class of non-isotropic Markov processes.
%The value of our research is that we cover diverse types of Markov processes, even including processes of which each coordinate $M^{i}$ in $M=(M^{1}\ldots, M^{d})$  is not the same. 

\begin{remark}	 
We give some examples of $L:=(L^{1},\ldots, L^{d})$ (in our discussion, $Z=(Z^1,\ldots,Z^d)$)
% and $M:=(M^{1},\ldots, M^{d})$ 
in the {\bf Conjecture} which our study covers.
\begin{itemize}
%\item [1.] 
%Let $M^{i}:=M^{\alpha_i}$ be a $1$-dimensional $\alpha_i$-stable-like process having the jump kernel $j^i(h)\asymp |h|^{-1-\alpha_i}, h\in \R$ with possibly different $\alpha_i\in (0, 2)$. 
%Consider an  increasing function $\phi(r), r>0$ satisfying ({\bf WS}) with $\ua:=\min\limits_{i\in \{1, \ldots, d\}}\alpha_i$ and $\la:=\max\limits_{i\in \{1, \ldots, d\}}\alpha_i$.
%Then,
%\begin{align*}
%\lC |h|^{\alpha_i}\le  \lC|h|^{\la}\le
%&\ \phi(|h|)/\phi(1)\le \uC|h|^{\ua}\le\uC |h|^{\alpha_i}&\text{for } |h|\ge 1,\\
%\lC |h|^{-\alpha_i}\le  \lC|h|^{-\la}\le
%&\ \phi(1)/\phi(|h|)\le \uC|h|^{-\ua}\le\uC |h|^{-\alpha_i}&\text{for } |h|\le 1,
%\end{align*}
%and so that  $\mu^1(h):=(|h|\phi(|h|))^{-1}\asymp j^i(h)$. Our result \autoref{mainthm} offers a type of two-sided heat kernel estimates for $M=(M^{\alpha_1},\cdots,M^{\alpha_d})$.
\item[1.]
Let $ m>0$ and $\alpha\in (0, 2)$.
Let $L^{i}$ be a $1$-dimensional $m$-relativistic $\alpha$-stable process  with the characteristic function
$\E^0\big[\exp\big(i \xi\cdot L^{i}_t\big)\big]
=\exp\big(t\big(m^{\alpha}-(\xi^2+m^2)^{\alpha/2}\big)\big)$.
By \cite[(3.9)--(3.10)]{ChSo03}, the corresponding L\'evy kernel $\mu^{1}(h)$ of $ L^{i}$ satisfies that 
$$\mu^{1}(h)\asymp {\Phi(m|h|)}/{|h|^{1+\alpha}},$$
where $\Phi(r)\asymp e^{-r}(1+r^{\alpha/2})$ near $r=\infty$ and $\Phi(r)=1+(\Phi)^{''}(0)r^2/2+o(r^4)$ near $r=0$.
%Therefore we consider $d$-dimensional Markov process $M$ with the kernel $j^i(h)\asymp \mu^1(h)$ where $\mu^1(h)\asymp $|h|^{-1-\alpha}$.
\item[2.]
 For $\alpha_{ k}\in(0, 2),  m>0$, the independent sum of $1$-dimensional $\alpha_{k}$-stable processes $L^{ i}:=L^{\alpha_1}+\ldots+ L^{\alpha_n}$ and of $1$-dimensional relativistic $\alpha_{k}$-stable processes $L^{ i}:=L^{m, \alpha_1}+\ldots+L^{m, \alpha_n}$ are typical examples of $1$-dimensional L\'evy process $L^{i}$ having the kernel $\mu^1(h)=(h\phi(|h|))^{-1}$ with {\bf (WS)} for $\phi$.
For more details, see \cite[page 279]{ChKu08}.

\item[3.]
For more examples, one can refer to \cite[Example 2.3]{ChKu08}.
\end{itemize}
\end{remark}

The  paper is organized as follows: In \autoref{sec:pre}, we prove the existence of a conservative Hunt process $X$ associated to Dirichlet form $(\EE, \FF)$ defined in \eqref{eq:df}, and obtain on-diagonal upper heat kernel estimates for $p(t, x, y)$ with the help of Nash-type inequality. Also, we discuss the scaled process $Y^{(\kappa)}_t=\kappa^{-1}X_{\phi(\kappa)t}$, $\kappa>0$ which gives the clue of the exit time estimates for $X$ displayed in \autoref{cor:exit_new}. 
In \autoref{sec:uphke}, we aim to construct off-diagonal upper bound estimates. However, unlike isotropic jump processes,  Meyer's decomposition method does not work  for our non-isotropic case. Therefore, we present a strategy of the proof for off-diagonal upper bound estimates for $p(t, x, y)$ with the main technical result \autoref{prop:main}. The proof of \autoref{prop:main} is established in Appendix.
We  adopt the main idea shown in \cite{KKKpre} for the upper bound estimates in \autoref{sec:uphke} and Appendix. 
 However, we give a new  refined exponent $\{\theta_l:  l\in\{0, 1, \ldots, d-1\}\}$ of $\HHq{q+\theta_l}{l}$ (see, \eqref{eq:Hi}) to obtain the estimates in an accurate way, and introduce an efficient notation $\mathfrak N(\delta), \delta\in \Z$ in \eqref{eq:newmm} 
to deal with the diverse orders induced by $\phi$.
The H\"older continuity for the bounded harmonic function is shown in \autoref{sec:low}, and it implies our process $X$ is Feller. 
Moreover, we obtain the H\"older continuity of $p(t,x,y)$, with which we are able to refine the process $X$ to start from everywhere. The matching lower bound heat kernel estimate for $p(t, x, y)$ is then obtained in \autoref{sec:low}.

\medskip
{\bf Notations.}
For $k\ge 0$, let $C_c^k(\Rd)$ and  $C_0^k(\Rd)$ denote spaces of  $C^k$-functions on $\Rd$ with compact support, and
with vanishing at infinity, respectively.
We use $\langle \cdot, \cdot\rangle$  to denote the inner product in $\Rd$, $\|\cdot\|_r$ to denote the $L^r(\Rd)$ norm, and $|A|$ to denote the Lebesgue measure of $A\subset \Rd$.
For $x\in\R^d, r>0$, we use $B(x, r)$ to denote  a ball centered at $x$ with  radius $r$, and $Q(x,r)$ to denote  a cube centered at $x$ with  side length $r$.
 The letter $c=c(a, b, \ldots)$ will denote a positive constant depending on $a,b,\ldots$, and it may change at each appearance.
The labeling of the constants $c_1, c_2, \ldots$ begins anew in the proof of each statement.
 The notation $:=$ is to be read as ``is defined to be".

\section{Preliminaries}\label{sec:pre}
In this section, we will discuss the existence of a process $X$  corresponding to the jump kernel $J$ defined in \eqref{eq:J-ellipticity-assum}, the transition density $p(t, x, y)$, as well as the distribution of exit times for $X$. In \autoref{subsec:Dirichlet}, we argue that $(\cE,\cF)$ is a regular Dirichlet form, hence , according to Theorem 7.2.1 in \cite{MR2778606}, there exists a Hunt process that starts from almost every point. In \autoref{subsec:Nash}, we discuss the Nash-type inequality and obtain on-diagonal upper heat kernel estimates of $p(t,x,y)$. In \autoref{subsec:upall}, we introduce scaled and truncated processes to study the distributions of exit times.
\subsection{The Dirichlet form}
\label{subsec:Dirichlet}
\begin{definition}
	\label{def:Dirichlet}
	We say that $ (\mathcal G, \mathcal H)$ is a \textit{symmetric Dirichlet form }on $L^2(\Rd)$  if $\cG$ is a non-negative definite symmetric bilinear form on $L^2(\Rd)$ which is closed  and Markovian, that is, 
	\begin{itemize}
		\item [($\cG$.1)] ({\it $\cG$ is closed}): if $ \mathcal H$ is complete with respect to $\cG_1:=\cG+\|\cdot\|_2^2$ metric.
		\item [($\cG$.2)] ({\it $\cG$ is Markovian}):
		for each $\varepsilon>0$, there exists a function $\eta_{\varepsilon}(t)\in[-\varepsilon,1+\varepsilon]$ for $t\in \R$ such that
		\begin{align*}
		\eta_{\varepsilon}(t)=t\ \text{ for } &t\in [0, 1];\quad
		0\leq\eta_\varepsilon(t)-\eta_\varepsilon(s)\leq t-s\  \text{ for } s<t; \\
		\text{ for any }u\in  { \mathcal H}, &\quad { \eta_\varepsilon\circ u}\in {\mathcal H}\ \text{ and } \ \cG( {\eta_\varepsilon\circ u},  {\eta_\varepsilon\circ u})\le \cG(u, u).
		\end{align*}
	\end{itemize} 
	A subset $\mathcal{C}$ of $\cH\cap C_c(\Rd)$  is called a {\it core} of a symmetric form $\cG$ if it is dense in $\cH$ with $\cG_1$-norm and dense in $C_c(\Rd)$ with uniform norm, and 
	$\cG$ is called {\it regular} if
	\begin{itemize}
		\item [($\cG$.3)] $\cG$ possesses a core. 
	\end{itemize}
\end{definition}

\begin{theorem}
	\label{thm:regular}
	Let $J$ be the symmetric measurable function defined in \eqref{eq:J-ellipticity-assum}. 
	Then  $(\mathcal E, \mathcal F)$  defined in \eqref{eq:df} is a regular Dirichlet form {on $L^2(\Rd)$}.
\end{theorem}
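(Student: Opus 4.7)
The plan is to exploit the comparability \eqref{eq:J-ellipticity-assum}, which says $J \asymp J^\phi$, and reduce the statement for $(\mathcal{E}, \mathcal{F})$ to the corresponding statement for the Lévy process Dirichlet form $(\mathcal{E}^\phi, \mathcal{F}^\phi)$ attached to $Z$ in \eqref{Diriphi}. Since $Z^1,\ldots,Z^d$ are independent one-dimensional symmetric Lévy processes with Lévy measure $\nu^1$, the product Dirichlet form $(\mathcal{E}^\phi,\mathcal{F}^\phi)$ on $L^2(\R^d)$ is regular with $C_c^\infty(\R^d)$ as a core; this is a standard consequence of Examples 1.4.1 and the discussion in Section 4.5 of Fukushima--Oshima--Takeda. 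The ellipticity bound
\[
\Lambda^{-1}\mathcal{E}^\phi(u,u)\le \mathcal{E}(u,u)\le \Lambda\,\mathcal{E}^\phi(u,u)
\]
holds for every measurable $u$ and therefore $\mathcal{F}=\mathcal{F}^\phi$ with the two norms $\mathcal{E}_1$ and $\mathcal{E}^\phi_1$ equivalent.

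First I would check symmetry and nonnegativity, which are immediate from the symmetry of $J$. Closedness then transfers: for any $\mathcal{E}_1$-Cauchy sequence $\{u_n\}\subset \mathcal{F}$, equivalence of norms makes it $\mathcal{E}^\phi_1$-Cauchy, hence convergent in the closed form $(\mathcal{E}^\phi,\mathcal{F}^\phi)$ to some $u\in\mathcal{F}^\phi=\mathcal{F}$, and the comparison then yields $\mathcal{E}(u_n-u,u_n-u)\to 0$. For the Markov property ($\mathcal{G}.2$), I would take any normal contraction $\eta_\varepsilon$ as in Definition~\ref{def:Dirichlet} and use the pointwise estimate
\[
\bigl|(\eta_\varepsilon\!\circ\! u)(x+e^i\tau)-(\eta_\varepsilon\!\circ\! u)(x)\bigr|\le \bigl|u(x+e^i\tau)-u(x)\bigr|,
\]
which, combined with $J\ge 0$, gives $\mathcal{E}(\eta_\varepsilon\!\circ\! u,\eta_\varepsilon\!\circ\! u)\le \mathcal{E}(u,u)$.

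For regularity I would take $\mathcal{C}:=C_c^\infty(\R^d)$ as the candidate core. To see $\mathcal{C}\subset \mathcal{F}$, for $u\in C_c^\infty(\R^d)$ use $|u(x+e^i\tau)-u(x)|^2\le \min(\|\partial_i u\|_\infty^2\tau^2,4\|u\|_\infty^2)$ together with
\[
\int_{\R}(1\wedge \tau^2)\,\nu^1(|\tau|)\,d\tau<\infty,
\]
already noted right before \eqref{Diriphi}, to conclude $\mathcal{E}^\phi(u,u)<\infty$, whence $\mathcal{E}(u,u)<\infty$. Density of $\mathcal{C}$ in $C_c(\R^d)$ under the uniform norm is classical. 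Density of $\mathcal{C}$ in $(\mathcal{F},\mathcal{E}_1^{1/2})$ is equivalent, via norm comparison, to density in $(\mathcal{F}^\phi,(\mathcal{E}^\phi_1)^{1/2})$, which holds because $(\mathcal{E}^\phi,\mathcal{F}^\phi)$ is the regular Dirichlet form of the Lévy process $Z$.

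The only mildly delicate step is ensuring that $C_c^\infty(\R^d)$ really is a core for the tensor-product Dirichlet form $(\mathcal{E}^\phi,\mathcal{F}^\phi)$, since $\nu$ is supported on the coordinate axes and the corresponding generator acts in each coordinate separately. I would handle this by noting that translation by $e^i\tau$ is strongly continuous in $L^2$, so $\mathcal{F}^\phi$ coincides with the Bessel-type space $\{u\in L^2:\ \sum_i\int_{\R}\|u(\cdot+e^i\tau)-u(\cdot)\|_2^2\,\nu^1(|\tau|)\,d\tau<\infty\}$, then smooth $u\in\mathcal{F}^\phi$ by first truncating with a smooth cutoff $\chi_R$ and then convolving with a standard mollifier; dominated convergence (with the uniform bound from weak scaling of $\phi$) yields $\mathcal{E}_1^\phi$-approximation by elements of $C_c^\infty(\R^d)$. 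Once this core property is in place, the theorem follows.
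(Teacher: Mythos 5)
Your approach is correct, and while it shares the same overall skeleton as the paper (symmetry is immediate; the Markov property is a one-line normal-contraction estimate; closedness and regularity are the real work, handled by reducing to the L\'evy form $(\mathcal{E}^\phi,\mathcal{F}^\phi)$), it diverges meaningfully on the regularity step. The paper proves $\mathcal{F}^\phi\subset \overline{C_c^1(\R^d)}^{\mathcal{E}_1^\phi}$ by a resolvent argument: it takes $f=U_\lambda^Z u$ with $u\in C_c(\R^d)$, shows $f\in L^2\cap C_0$ using the heat kernel estimate \eqref{eq:hkez}, performs a \emph{vertical} truncation $f^\delta=f-[(-\delta)\vee f]\wedge\delta$ (which is automatically compactly supported since $f\in C_0$), mollifies, and establishes $\mathcal{E}_1^\phi$-convergence through duality against $U_1^Z(L^2)$ together with \cite[Theorem 1.4.2(iv)]{MR2778606}. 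You instead go the classical route for symmetric L\'evy forms: identify $\mathcal{F}^\phi$ with the Bessel-type difference space, use a \emph{spatial} smooth cutoff $\chi_R$ followed by mollification, and control the cutoff error by the Jensen/Fubini contraction together with the bound $\int(1\wedge |h|^2/R^2)\,\nu^1(|h|)\,dh\lesssim 1/\phi(R)\to 0$ coming from {\bf (WS)}. Both routes are valid. Your approach is shorter and leans on the well-known regularity of L\'evy Dirichlet forms (FOT, Example 1.4.1), while the paper's is more self-contained and has the bonus of avoiding the spatial-cutoff energy estimate. If you were to write yours in full you should spell out two things you currently compress: (i) the $1/\phi(R)$ bound for the cutoff error term $\int u(x)^2(\chi_R(x+e^ih)-\chi_R(x))^2\,\nu^1(|h|)\,dh\,dx$, and (ii) that the DCT for the mollification uses the dominating function $4\|u(\cdot+e^i\tau)-u(\cdot)\|_2^2\,\nu^1(|\tau|)$ via Minkowski's integral inequality applied to $\psi_\varepsilon * u$. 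For the Markov property, the paper's Definition~\ref{def:Dirichlet} asks for an \emph{explicit} $\eta_\varepsilon$, which it constructs by mollifying the clip function; your ``take any normal contraction'' should be replaced by that construction, but this is cosmetic.
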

\begin{proof}
Since $C_c^1(\R^d)\subset  \mathcal F$ and $C_c^1(\R^d)$ is a dense subspace in $L^2(\R^d)$, $\mathcal F$ is clearly dense in $L^2(\R^d)$.
The linearity and symmetricity of $\cE$ are clear, and we will show that $(\cE, \cF)$ is a regular Dirichlet form on $L^2(\Rd)$ by the following $3$-steps.

(1) {\it  $\mathcal E$ is closed.}
	Let $\{u_n\}_{n\geq1}$ be a $\mathcal E_1$-Cauchy sequence in $\mathcal F$. Since $\mathcal F$ is dense in $L^2(\R^d)$, there exists $u\in L^2(\R^d)$ with $\|u_n-u\|_2\to0$ as $n\to \infty$. It remains to show $u\in \mathcal F$ and $\mathcal E(u_n-u,u_n-u)\to0$ as $n\to \infty$. 
	For any $n\in \N$, denote $v_n=u_n-u$. By $\bf (WS)$, we have that  for any $\varepsilon>0$
		\begin{align}\label{eq:e-closed}
		&\int_{\Rd}\Big(\sum_{i=1}^d\int_{|\tau|>\varepsilon}\big(v_n(x+e^i \tau) - v_n(x)\big)^2 J (x,x+e^i \tau) d \tau \Big) d x\nn\\
		\leq&\  \frac{\Lambda\uC}{\phi(1)}\int_{\Rd}\left(\sum_{i=1}^d\int_{\varepsilon<|\tau|\leq 1}\frac{\big(v_n(x+e^i \tau) - v_n(x)\big)^2}{|\tau|^{1+\ua}} d\tau\right)d x\nn\\
		&\qquad+\frac{\Lambda}{\lC \phi(1)} \int_{\Rd}\left(\sum_{i=1}^d\int_{|\tau|> 1}\frac{\big(v_n(x+e^i \tau) - v_n(x)\big)^2}{|\tau|^{1+\la}}d\tau\right)d x\nn\\
		\leq&\  \frac{\Lambda\uC}{\phi(1)}\sum_{i=1}^d\int_{\varepsilon<|\tau|\leq 1}\int_{\Rd}\frac{\big(v_n(x+e^i \tau) - v_n(x)\big)^2}{|\tau|^{1+\ua}}d xd\tau+\frac{4d\Lambda}{\lC\phi(1)\la}\|v_n\|_2^2,
		\end{align}
and the right-hand side of \eqref{eq:e-closed} converges to $0$ as $n\to \infty$.
		Since the left-hand side of  \eqref{eq:e-closed} converges to $0$ as $n\to \infty$ for any arbitrary $\varepsilon>0$,   $(\mathcal E, \mathcal F)$ is closed.
\medskip
	
(2) {\it  $\mathcal E$ is Markovian.}
		For any $\varepsilon>0$, let
		 \begin{equation}	\label{psieps}
		 \tilde\eta_\varepsilon(x):=\begin{cases}
	1+\varepsilon& \text{if }x\in(1+\varepsilon,\infty);\\
	x& \text{if }x\in[-\varepsilon,1+\varepsilon];\\
	-\varepsilon& \text{if }x\in(-\infty,-\varepsilon), 
	\end{cases}\text{ and }\
	\psi_\varepsilon(x):=\begin{cases}
	ce^{-1/(\varepsilon^2-x^2)}& \text{if }x\in[-\varepsilon,\varepsilon];\\
	0& \text{otherwise,}
	\end{cases}
	\end{equation}
where $c$ is a constant such that $\int_\R \psi_\varepsilon d x=1$.
Then $\eta_\varepsilon:=\tilde{\eta}_\varepsilon*\psi_\varepsilon$ has the following properties:
\begin{itemize}
\item$\eta_\varepsilon(t)=t$ for all $t\in[0,1]$;
\item $\eta_\varepsilon(t)\in[-\varepsilon,1+\varepsilon]$ for all $t\in\R$;
\item $0\leq\eta_\varepsilon(t)-\eta_\varepsilon(s)\leq t-s$ if $s<t$.
\end{itemize}
Thus for any $u\in\mathcal F$, 
\begin{align*}
\mathcal E(\eta_\varepsilon(u),\eta_\varepsilon(u))
=&\int_{\Rd}\Big(\sum_{i=1}^d\int_{\R}\big(\eta_\varepsilon(u(x+e^i\tau))-\eta_\varepsilon(u(x))\big)^2J(x,x+e^i\tau)d \tau\Big)d x\\
\leq&\int_{\Rd}\Big(\sum_{i=1}^d\int_{\R}\big(u(x+e^i\tau)-u(x)\big)^2J(x,x+e^i\tau)d \tau\Big)d x=\mathcal E(u,u),
\end{align*}
 and hence $\mathcal E$ is Markovian.
\medskip

(3) {\it $\mathcal E$ is regular}. 
 Our claim is to prove  $\mathcal F=\overline{C_c^1(\Rd)}^{\cE_1}$.  Clearly $C_c^1(\Rd)\subset\mathcal F$, and using the similar arguments  as in (1), we have $\overline{C_c^1(\Rd)}^{\cE_1}\subset \mathcal F$. On the other hand, to obtain $ \mathcal F\subset \overline{C_c^1(\Rd)}^{\cE_1}$, since
$(\mathcal E, \mathcal F)$ is comparable to $(\mathcal E^\phi,\mathcal F^\phi)$ defined in \eqref{Diriphi} with $\mathcal F=\mathcal F^\phi$, it suffices to show that $\mathcal F^\phi\subset \overline{C_c^1(\Rd)}^{\cE_1^\phi}$ where $\mathcal{E}_1^\phi(u, u):=\mathcal{E}^\phi(u, u)+\|u\|_2^2$.
Let ${U_\lambda^Z} u(x)$ be the $\lambda$-resolvent for $Z$ defined as
     $${U_\lambda^Z} u(x):=\mathbb E^x\int_0^\infty e^{-\lambda t}u(Z_t)d t=\int_0^\infty\int_{\R^d}e^{-\lambda t}u(y){ p^Z}(t,x,y)d yd t,$$
     where $p^Z (t,x,y)$ is the transition density of $Z$.
%    Since $Z^{1}, Z^{2},\dots, Z^{d}$ are independent, the transition density ${p^Z}(t,x,y)$ of $Z$ has the following estimates {as in \eqref{eq:hkez}}:     
%for any $t>0$, $x,y\in\Rd$,
%     \begin{equation*}
%     { p^Z}(t,x,y)\asymp\prod_{i=1}^d\left([\phi^{-1}(t)]^{-1}\wedge \frac{t}{\phi(|x^i-y^i|)|x^i-y^i|}\right).
%     \end{equation*}
     Since ${U_\lambda^Z}(C_c(\Rd))$ is dense in $\mathcal F^\phi$ with respect to $\mathcal E^\phi_1$-metric,  it remains to show ${U_\lambda^Z}(C_c(\Rd))\subset \overline{C_c^1(\Rd)}^{\mathcal{E}_1^\phi}$.
     For any $f\in {U_\lambda^Z}(C_c(\Rd))$, there exists $u\in C_c(\Rd)$ such that supp$[u]\subset B(0,M)$ for some $M>0$ and $f={U_\lambda^Z} u$. Then  for any $t_0>0$,
     \begin{align*}
     |f(x)|=|{U_\lambda^Z} u(x)|&\leq \int_0^\infty \mathbb E^x[e^{-\lambda t}|u(Z_t)|]d t\\
     &=\int_0^{t_0}e^{-\lambda t}\mathbb E^x[|u(Z_t)|]d t+\int_{t_0}^\infty \mathbb E^x[e^{-\lambda t}|u(Z_t)|]d t\\
     &\leq \frac{1-e^{-\lambda t_0}}{\lambda}\|u\|_{\infty}+\int_{t_0}^\infty \int_{B(0,M)}e^{-\lambda t}|u(y)| {p^Z}(t,x,y)d yd t.
     \end{align*}
    For any $\varrho>0$, we choose $t_0$ small enough so that
    $$\frac{1-e^{-\lambda t_0}}{\lambda}\|u\|_{\infty}<\frac{\varrho}{2}.$$
    Also by \eqref{eq:hkez}, there exists $M_1$ large enough such that for all $x$ with $|x|>M_1$, 
     $$\int_{t_0}^\infty \int_{B(0,M)}e^{-\lambda t}|u(y)| { p^Z}(t, x, y)d yd t<\frac{\varrho}{2}.$$
     Therefore, we have $f\in L^2(\Rd)\cap C_0(\Rd)$.
    Let $a\vee b:=\max\{a, b\}$.
    For any $\delta>0$ and $\varepsilon>0$, let
     $$f^\delta:=f-[(-\delta)\vee f]\wedge \delta \quad \text{ and }\quad f^\delta_\varepsilon(x):=\int_{B(0,\varepsilon)}\psi_{\varepsilon}(|y|)f^\delta(x-y)d y,$$ where $\psi_\varepsilon$ is given in \eqref{psieps}. By this mollification, we see that
     $$f^\delta_\varepsilon\in C_c^1(\Rd)\qquad \text{ and }\qquad
      \|f_\varepsilon^\delta\|_2\to\|f^\delta\|_2.$$
		Moreover, we have
		\begin{align*}
		&\mathcal E^\phi(f_\varepsilon^\delta,f_\varepsilon^\delta)=\int_{\R^d}\sum_{i=1}^d\int_\R\big(f_\varepsilon^\delta(x+e^i\tau)-f_\varepsilon^\delta(x)\big)^2J^\phi(x,x+e^i\tau)d \tau d x\\
		&\leq \int_{\R^d}\sum_{i=1}^d\int_\R\int_{B(0,\varepsilon)}\big(f^\delta(x+e^i\tau-z)-f^\delta(x-z)\big)^2J^\phi(x,x+e^i\tau)\psi_{\varepsilon}(|z|)d zd \tau d x\\
		&=\int_{B(0,\varepsilon)}\psi_{\varepsilon}(|z|)\int_{\R^d}\sum_{i=1}^d\int_\R\big( f^\delta(x+e^i\tau)-f^\delta(x)\big)^2J^\phi(x,x+e^i\tau)d \tau d xd z
		=\mathcal E^\phi(f^\delta,f^\delta).
		\end{align*}
		We now fix $\delta$.  Note that for any $v\in L^2(\Rd)$,
		$$\cE_1^\phi(f_\varepsilon^\delta, {U^Z_1}v)= \langle f_\varepsilon^\delta,v\rangle\to \langle f^\delta,v\rangle =\cE_1^\phi(f^\delta,{ U^Z_1}v)\qquad\text{ as }\varepsilon\to0. $$  
		Since $U^Z_1(L^2(\Rd))$ is dense in $\cF^\phi$ with respect to $\cE_1^\phi$ and $f^\delta\in \cF^\phi$ by \cite[Theorem 1.4.2 (iv)]{MR2778606}, we have that $\cE_1^\phi(f_\varepsilon^\delta,f^\delta)\to \mathcal E^\phi_1(f^\delta,f^\delta)$ as $\varepsilon\to0$, therefore,
		\begin{align*}
		\mathcal E_1^\phi(f_\varepsilon^\delta-f^\delta,f_\varepsilon^\delta-f^\delta)
		\leq&\ 2\mathcal E^\phi_1(f^\delta,f^\delta)-2\mathcal E_1^\phi(f_\varepsilon^\delta,f^\delta)
		\xrightarrow{\varepsilon\to0}0.
		\end{align*}
Hence, $f_\varepsilon^\delta\to f^\delta$ with respect to $\mathcal E_1^\phi$ norm as $\varepsilon\downarrow0$. Since $f^\delta\to f$ with respect to $\mathcal E_1^\phi$ norm as $\delta\downarrow0$ by \cite[Theorem 1.4.2 (iv)]{MR2778606}, we conclude that $f\in \overline{C_c^1(\Rd)}^{\mathcal{E}_1}$.
\end{proof}

\medskip
	
	\begin{remark}\label{rem:reg}
		By \autoref{thm:regular},  $(\mathcal E, \mathcal F)$ is a regular Dirichlet form on $L^2(\R^d)$. Therefore, there exists $\mathcal N_0\subset\R^d$ having zero capacity with respect to the Dirichlet form $(\mathcal E, \mathcal F)$ and there exists a Hunt process $(X, \mathbb P^x)$ that can start from any point in $\R^d\setminus\mathcal N_0.$ For more details on this, refer to \cite{MR2778606}.
		Also the following L\'evy system for $X$ holds:
		\begin{equation}\label{eq:LSd}
		\E^x \left[\sum_{s\le S} f(s,X_{s-}, X_s) \right] = \E^x \left[ \int_0^S \big(\sum_{i=1}^{d}\int_{\R} 
		f(s,X_s, X_s+e^ih) 
		J(X_s,X_s+e^ih) dh \big) ds \right],
		\end{equation}
		where $f$ is non-negative, vanishing on the diagonal, and $S$ is a stopping time (with respect to the filtration of $X$). 
		For the L\'evy system, refer to \cite{ChKu03} and \cite{ChKu08} for the proof.
Moreover, we will show $X$ is a strong Feller process in \autoref{rem:sFP}, after obtaining  the H\"older continuity of resolvents.
		\end{remark}

\subsection{ Nash-type  inequality and on-diagonal upper heat kernel estimates}
\label{subsec:Nash}
\begin{proposition}
	\label{propNash}
There exist constants $c_1,c_2$ such that for any $f\in \mathcal F$ with $\|f\|_1=1$, we have
	$$\|f\|_2^2\leq c_1\mathcal E(f,f)\phi(c_2\|f\|_2^{-2/d}).$$
\end{proposition}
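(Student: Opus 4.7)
My plan is to reduce to the translation-invariant Dirichlet form $(\cE^\phi,\cF^\phi)$ associated with $Z$ and then apply a standard Fourier-analytic Nash argument, being careful with the non-isotropic characteristic exponent. The ellipticity \eqref{eq:J-ellipticity-assum} gives $\cE(f,f)\asymp \cE^\phi(f,f)$ with constants depending only on $\Lambda$, so it suffices to prove the inequality for $\cE^\phi$. For $f\in\cF\cap L^1(\Rd)$, the Fourier transform satisfies $\|\hat f\|_\infty\le\|f\|_1=1$ and $\|\hat f\|_2=\|f\|_2$. Applying Plancherel coordinate by coordinate to \eqref{Diriphi} yields
\begin{equation*}
\cE^\phi(f,f)\;=\;2\int_{\Rd}|\hat f(\xi)|^2\sum_{i=1}^d\psi^1(\xi_i)\,d\xi,\qquad \psi^1(\eta):=\int_{\R}\bigl(1-\cos(\eta\tau)\bigr)\,\bigl(|\tau|\phi(|\tau|)\bigr)^{-1}d\tau,
\end{equation*}
since $\psi^1$ is exactly the characteristic exponent of the one-dimensional process $Z^i$.

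Next I would establish the pointwise estimate $\psi^1(\eta)\asymp 1/\phi(1/|\eta|)$ for all $\eta\neq 0$. Splitting at $|\tau|=1/|\eta|$, using $1-\cos\le \tfrac12\eta^2\tau^2$ on the inner part and $1-\cos\le 2$ on the outer, and invoking \textbf{(WS)} to compare $\int_0^{1/|\eta|}\tau/\phi(\tau)d\tau$ and $\int_{1/|\eta|}^\infty(\tau\phi(\tau))^{-1}d\tau$ to $\bigl(|\eta|^2\phi(1/|\eta|)\bigr)^{-1}$ and $\phi(1/|\eta|)^{-1}$ respectively produces the claim. Then for any $R>0$, if $\|\xi\|_\infty>R$, at least one coordinate $|\xi_i|>R$ and monotonicity of $\phi$ gives $\sum_j\psi^1(\xi_j)\ge\psi^1(\xi_i)\ge c\,\phi(1/R)^{-1}$. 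Splitting the Plancherel representation of $\|f\|_2^2=\|\hat f\|_2^2$ at the sup-norm cube $\{\|\xi\|_\infty\le R\}$ therefore gives
\begin{equation*}
\|f\|_2^2\;\le\;(2R)^d\,\|\hat f\|_\infty^2+\int_{\|\xi\|_\infty>R}|\hat f(\xi)|^2\,d\xi\;\le\;(2R)^d+c\,\phi(1/R)\,\cE^\phi(f,f).
\end{equation*}

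Finally I optimize by choosing $R=c_0\|f\|_2^{2/d}$ so that the first term equals $\tfrac12\|f\|_2^2$; absorbing it into the left-hand side and using $\cE^\phi\asymp\cE$ yields $\|f\|_2^2\le c_1\cE(f,f)\phi(c_2\|f\|_2^{-2/d})$, which is the target. The main technical point is the characteristic-exponent estimate $\psi^1(\eta)\asymp 1/\phi(1/|\eta|)$, since this is where the weak scaling condition $\textbf{(WS)}$ really enters; the subsequent splitting and optimization are routine. A minor bookkeeping point is justifying the use of the Fourier transform for arbitrary $f\in\cF$ with $\|f\|_1=1$, which follows from $f\in L^1\cap L^2$ and density.
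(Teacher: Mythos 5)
Your proof is correct, but it follows a genuinely different route from the paper's. The paper derives the Nash inequality \emph{from} the known ultracontractivity of the reference semigroup $P^Z_t$: it uses the heat kernel bound \eqref{eq:hkez} (imported from Chen--Kumagai) to get $\|P^Z_t f\|_\infty\le c\|f\|_1[\phi^{-1}(t)]^{-d}$, writes $\|f\|_2^2=\langle f,P^Z_t f\rangle-\int_0^t\langle f,A^Z P^Z_s f\rangle\,ds\le c[\phi^{-1}(t)]^{-d}+t\,\mathcal E^\phi(f,f)$, and optimizes over $t$. You instead run Nash's original Fourier argument: diagonalize $\mathcal E^\phi$ via Plancherel as $2\int|\hat f(\xi)|^2\sum_i\psi^1(\xi_i)\,d\xi$, prove the two-sided characteristic-exponent bound $\psi^1(\eta)\asymp 1/\phi(1/|\eta|)$ from \textbf{(WS)} (which is indeed correct, by splitting at $|\tau|=1/|\eta|$ and using the scaling bounds on the two pieces), split $\|\hat f\|_2^2$ at the cube $\{\|\xi\|_\infty\le R\}$, and optimize over $R$. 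The practical trade-off is that the paper's proof is shorter but not self-contained (it presupposes the sharp heat kernel estimate for $Z$), whereas yours derives the Nash inequality directly from the L\'evy symbol and \textbf{(WS)} without invoking any a priori heat kernel bound; your key technical lemma $\psi^1(\eta)\asymp 1/\phi(1/|\eta|)$ is exactly the kind of estimate that underlies \eqref{eq:hkez} in the first place. One small bookkeeping remark: both $\mathcal E$ and $\mathcal E^\phi$ in this paper carry no factor $\tfrac12$, so keep that convention consistent when you write the Plancherel identity; this only affects harmless constants.
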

\begin{proof}
For $f\in \mathcal F=\mathcal F^\phi$, denote by $f_t=P^Z_tf$ where $P_t^Z$ is the semigroup associated with $Z$.
 By \eqref{eq:hkez}, there exists a constant $c_1>0$ such that
		$$\|f_t\|_\infty\leq c_1\|f\|_1[\phi^{-1}(t)]^{-d}.$$
Let $A^Z$ be the generator of $P_t^Z$ so that $f_t=f +\int_0^tA^Zf_sd s.$
Then for any $t>0$ and  {for $f\in \FF$} with $\|f\|_1=1$, we have that 
\begin{align}
		\label{fL2}
		\|f\|_2^2=&\langle f, f_t\rangle-\int_0^t\langle f,A^Zf_s\rangle d s
	\leq\|f_t\|_\infty\|f\|_1+t\mathcal E^{\phi}(f,f)\nn\\
		\leq&\ c_1[\phi^{-1}(t)]^{-d}+t\mathcal E^{\phi}(f,f).
\end{align}
		We want to minimize the right-hand side with respect to $t$. Since $\phi$ is increasing, there exists  a unique $t_0>0$ such that $ \mathcal E^{\phi}(f,f)=t_0^{-1}[\phi^{-1}(t_0)]^{ -d}$ which implies
		$\|f\|_2^2\leq c_2[\phi^{-1}(t_0)]^{-d}$ and $t_0\leq \phi(c_2^{1/d}\|f\|_2^{-2/d})$.
Therefore, $\|f\|_2^2$ is bounded above at $t=t_0$ in \eqref{fL2} so that 
$$\|f\|_2^2\leq (c_1+1) t_0{\mathcal E^\phi}(f,f)
\le		(c_1+1) \phi(c_2^{1/d}\|f\|_2^{-2/d}){\mathcal E^\phi}(f,f).$$
Since $(\mathcal E, \mathcal F)$ and $(\mathcal E^\phi,\mathcal F^\phi)$ are comparable, we get our assertion.
\end{proof}

\medskip

Let $P_t$ be the transition semigroup for $X$ associated with $(\mathcal E, \mathcal F)$. From the Nash-type inequality shown in \autoref{propNash},  in the following, we conclude that $P_t$ has a kernel $p(t,x,y)$ with the on-diagonal upper bound estimates almost everywhere.
 The exceptional set $\cN_0$ of next proposition was introduced in \autoref{rem:reg}.
 \begin{proposition}
	\label{p:on_upper}
	There  exist a properly exceptional set $\mathcal N\supset\mathcal N_0$ of $X$ and a positive symmetric kernel $p(t,x,y)$ defined on $(0,\infty)\times(\R^d\setminus\mathcal N)\times(\R^d\setminus\mathcal N)$ of $P_t$ satisfying that 
	\begin{equation*}		p(t+s,x,y)=\int_{\R^d}p(t,x,z)p(s,z,y)dz\qquad\text{ for every } x,y\in\R^d\setminus\mathcal N\text{ and }t,s>0.
	\end{equation*}
	Also there exists a positive constant $c>0$ such that 
	$$p(t,x,y)\leq c[\phi^{-1}(t)]^{-d},\qquad\text{ for any }t>0,\  x,y\in\R^d\setminus\mathcal N.$$
	 Moreover, there is an $\cE$-nest $\{F_k, k\ge 1\}$\footnote{ For the definition of $\cE$-nest, see e.g., \cite[p.69]{MR2778606}.} of compact sets so that $\cN=\Rd\backslash(\bigcup_{k=1}^\infty F_k)$ and that for every $t>0$ and $y\in \Rd\backslash \cN$, $x\to p(t, x, y)$ is continuous on each $F_k$.
\end{proposition}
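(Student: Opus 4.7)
The plan is to derive an $L^1 \to L^\infty$ ultracontractivity bound for $P_t$ out of the Nash-type inequality in \autoref{propNash}, and then invoke standard Dirichlet form machinery to realize $P_t$ as an integral kernel that can be refined to a pointwise object off a properly exceptional set. Fix $f \in L^1(\R^d) \cap L^2(\R^d)$ with $f \geq 0$ and $\|f\|_1 = 1$, and set $g(t) := \|P_t f\|_2^2$. Standard Dirichlet-form calculus gives $g'(t) = -2\mathcal{E}(P_t f, P_t f)$, while sub-Markovianity gives $\|P_t f\|_1 \leq 1$. Applying \autoref{propNash} to the normalization $P_t f/\|P_t f\|_1$ and using the monotonicity of $\phi$ together with $\|P_t f\|_1 \leq 1$ yields the differential inequality
\[
-g'(t) \;\geq\; \frac{2\,g(t)}{c_1\,\phi\bigl(c_2\,g(t)^{-1/d}\bigr)}.
\]
Separating variables, integrating in $t$, and inverting using \textbf{(WS)} to control $\phi^{-1}$ yields $\|P_t\|_{1 \to 2}^2 \leq C[\phi^{-1}(t)]^{-d}$. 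Self-adjointness on $L^2$ then gives $\|P_t\|_{2 \to \infty} = \|P_t\|_{1 \to 2}$ by duality, and the semigroup property combined with \textbf{(WS)} produces $\|P_t\|_{1 \to \infty} \leq c\,[\phi^{-1}(t)]^{-d}$.

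By a standard Dunford--Pettis/Kantorovich type argument this $L^1 \to L^\infty$ bound yields a non-negative symmetric measurable kernel $p(t,x,y)$ on $(0,\infty) \times \R^d \times \R^d$ with $P_t f(x) = \int p(t,x,y) f(y)\,dy$ almost everywhere, with $p(t,x,y) \leq c\,[\phi^{-1}(t)]^{-d}$ a.e., and with Chapman--Kolmogorov as an a.e.~identity. To promote these to pointwise statements I would use the Hunt process $(X,\mathbb{P}^x)$ from \autoref{rem:reg}: its transition semigroup restricted to $\R^d \setminus \mathcal{N}_0$ coincides quasi-everywhere with $P_t$, and one enlarges $\mathcal{N}_0$ to a properly exceptional set $\mathcal{N}$ by the standard procedure (see \cite[Theorem 4.2.8]{MR2778606} and the constructions in \cite{ChKu03,ChKu08}), after which $p(t,x,y)$ can be chosen pointwise on $(\R^d \setminus \mathcal{N})^2$ with Chapman--Kolmogorov and the on-diagonal bound holding everywhere, and with $\mathbb{E}^x[f(X_t)] = \int p(t,x,y) f(y)\,dy$ for all $x \in \R^d \setminus \mathcal{N}$.

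For the final assertion on the $\mathcal{E}$-nest, fix $t > 0$ and $y \in \R^d \setminus \mathcal{N}$. The semigroup identity $x \mapsto p(t,x,y) = P_{t/2}\bigl(p(t/2,\cdot,y)\bigr)(x)$ shows, together with $p(t/2,\cdot,y) \in L^2$ (from the on-diagonal bound), that $p(t,\cdot,y) \in \mathcal{F}$. By \cite[Theorem 2.1.3]{MR2778606} it therefore admits an $\mathcal{E}$-quasi-continuous modification, and by the very definition of quasi-continuity this modification is continuous on each member of some $\mathcal{E}$-nest of compact sets. A diagonal argument over rational $t > 0$ and a countable dense collection of $y$'s in $\R^d \setminus \mathcal{N}$, combined with the pointwise refinement chosen above, produces a single $\mathcal{E}$-nest $\{F_k\}$ with $\mathcal{N} = \R^d \setminus \bigcup_k F_k$ on which $x \mapsto p(t,x,y)$ is continuous for every $t>0$ and every $y \in \R^d\setminus\mathcal{N}$.

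The main obstacle will be the pointwise refinement and the compatibility of $\{F_k\}$ with $\mathcal{N}$: one must simultaneously ensure that $\mathcal{N}$ is properly exceptional (never hit by $X$ started outside it), that the common nest exhausts $\R^d \setminus \mathcal{N}$, and that the pointwise kernel agrees both with the transition density of $X$ and with its quasi-continuous modifications in the spatial variable. The ultracontractive step is essentially automatic given \autoref{propNash}; the delicate part is the interplay between the Hunt process from \autoref{rem:reg}, the L\'evy system \eqref{eq:LSd}, and the quasi-continuity theory of regular Dirichlet forms.
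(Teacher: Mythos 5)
Your proposal is correct and follows essentially the same route as the paper: a Nash-type inequality yields $L^1 \to L^\infty$ ultracontractivity with rate $[\phi^{-1}(t)]^{-d}$, which is then upgraded through regular Dirichlet-form machinery to a pointwise kernel off a properly exceptional set, together with the $\mathcal{E}$-nest and partial continuity of $x\mapsto p(t,x,y)$. The paper is simply terser in both steps, citing \cite[Proposition~II.1]{C96} for the ultracontractivity step (your differential-inequality computation with $g(t)=\|P_tf\|_2^2$ is precisely the proof Coulhon's statement packages) and deferring the entire kernel/exceptional-set/$\mathcal{E}$-nest construction to \cite[Theorem~3.1]{MR2465826}, which is exactly the black box you re-derive via Dunford--Pettis, the Hunt process from \autoref{rem:reg}, and the quasi-continuity diagonal argument.
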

\begin{proof}
 For any $f\geq0$, consider the semigroup $P_tf(x):=\mathbb E^x[f(X_t)]$ for $x\in\R\setminus \mathcal N_0$. According to \cite[Proposition II.1]{C96}, the Nash-type inequality in \autoref{propNash} implies 
		$$\|P_tf\|_{\infty}\leq m(t)\|f\|_1,\qquad \text{ for any }t>0\text{ and }f\in { L^1(\Rd)},$$
		where $m(t)$ is the inverse function of  $h(t)$ given in the following equation:
		\begin{align*}
			h(t):=&\int_t^\infty \frac{c_2\phi(c_1^{1/d}x^{-1/d})}{x}d x.
		\end{align*}
		Then by \textbf{(WS)},
		\begin{align*}
		h(t)=&\ c_3\int_0^{c_1t^{-1}}\frac{\phi(y^{1/d})}{y}d y
		\leq c_3\sum_{k=0}^{\infty}\frac{\phi(2^{-k/d}c_1^{ 1/d}t^{-1/d})}{2^{-(k+1)}c_1t^{-1}}\cdot (2^{-k}c_1t^{-1}-2^{-(k+1)}c_1t^{-1})\\
		\leq& \ c_3\lC^{-1}\phi(c_1^{1/d}t^{-1/d})
		\sum_{k=0}^\infty2^{-k\la/d}\leq c_4\phi(c_1^{1/d}t^{-1/d}).
		\end{align*}
		Since $\phi$ is increasing, {the inverse function $m(t)\leq c_1[\phi^{-1}(t/c_4)]^{-d}$ and hence}
		$$\|P_tf\|_\infty\leq \frac{c_1}{[\phi^{-1}(t/c_4)]^d}\|f\|_1,\qquad \text{ for any }t>0 \text{ and }f\in {L^1(\Rd)}.$$
		The rest of proof follows from  \cite[ Theorem 3.1]{MR2465826} with \textbf{(WS)}.
\end{proof}

\medskip

\subsection{Scaled process $Y^{(\kappa)}$.}\label{subsec:upall}

For any $\kappa>0$,
we first introduce a $\kappa$-scaled process 
$Y^{(\kappa)}_t:=\kappa^{-1}X_{\phi(\kappa)t}$
with the transition density 
\begin{align}\label{eq:scail_cut}
 q^{(\kappa)}(t, x, y)=\kappa^d p(\phi(\kappa) t, \kappa x, \kappa y).
\end{align}
Let $Q^{(\kappa)}_t$ be  a semigroup and  
\begin{align}\label{eq:bili_cut}
\mathcal{E}_t^{(\kappa)}(f, f):=t^{-1}\langle
f-Q^{(\kappa)}_tf, f\rangle
\end{align}
be a bilinear form corresponding to $Y^{(\kappa)}_t$ and $q^{(\kappa)}(t, x, y)$.

\begin{lemma}\label{lem:4.2CK08}
	The Dirichlet form $(\cW, \mathcal{D}(\cW))$ of $Y^{(\kappa)}$ in $L^2(\Rd)$ is given by 
	\begin{align*}
	\cW(u, u)&:=\int (u(x)-u(y))^2 J^{(\kappa)}(x, y) dx dy,\\
	\mathcal{D}(\cW)&:=\overline{\{u\in C_c(\Rd):  \cW(u, u)<\infty\}}^{\cW_1},	\end{align*}
	where 
	\begin{align*}
	&J^{(\kappa)}(x, y):=\kappa \phi(\kappa)  J(\kappa x, \kappa y)\qquad\text { for every } x, y\in \Rd, 
	\end{align*}
	and $\cW_1(u, u):=\cW(u, u)+\|u\|_2^2$. The Dirichlet form $(\cW, \mathcal{D}(\cW))$ is regular in $L^2(\Rd)$.
\end{lemma}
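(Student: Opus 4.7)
The plan is to derive $(\cW,\mathcal{D}(\cW))$ directly from $(\cE,\cF)$ by transferring the latter through the explicit scaling relation $Y^{(\kappa)}_t=\kappa^{-1}X_{\phi(\kappa)t}$, and then to deduce regularity from the already established regularity of $(\cE,\cF)$ in \autoref{thm:regular}.

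First I would relate the semigroup $P_t$ of $X$ to the semigroup $Q_t^{(\kappa)}$ of $Y^{(\kappa)}$. For $f\in L^2(\Rd)$, setting $g(x):=f(\kappa^{-1}x)$ and using that $Y^{(\kappa)}_0=y$ corresponds to $X_0=\kappa y$, one obtains $Q_t^{(\kappa)}f(y)=(P_{\phi(\kappa)t}g)(\kappa y)$. A single substitution $x=\kappa y$ turns this into $\langle f-Q_t^{(\kappa)}f,f\rangle=\kappa^{-d}\langle g-P_{\phi(\kappa)t}g,g\rangle$, so that, after dividing by $t$ and sending $t\downarrow 0$, the definition \eqref{eq:bili_cut} yields the key identity
$$\cW(f,f)=\kappa^{-d}\phi(\kappa)\,\cE(g,g).$$
Substituting \eqref{eq:df} for $\cE(g,g)$ and then performing the change of variables $x=\kappa u$, $\tau=\kappa s$ inside each axis integral extracts a Jacobian $\kappa^{d+1}$, which combines with $\kappa^{-d}\phi(\kappa)$ to produce precisely the factor $\kappa\phi(\kappa)$ that defines $J^{(\kappa)}$, while simultaneously converting the increments of $g$ into axis-parallel increments of $f$. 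This yields the claimed expression for $\cW(u,u)$, understood (as in \eqref{eq:df}) as an integral against the measure supported on axis-parallel pairs.

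For the regularity claim, I would observe that the scaled kernel $J^{(\kappa)}$ satisfies $\Lambda^{-1}J^{\phi_\kappa}(x,y)\le J^{(\kappa)}(x,y)\le \Lambda J^{\phi_\kappa}(x,y)$, where $\phi_\kappa(r):=\phi(\kappa r)/\phi(\kappa)$ obeys $\bf(WS)$ with the same constants $\lC,\uC,\la,\ua$ as $\phi$. Therefore the three steps of the proof of \autoref{thm:regular}---closedness via the weak-scaling dominated convergence bound, the Markovian property via the normal contraction $\eta_\varepsilon$, and the existence of a $C_c^1$-core via resolvent approximation and mollification---apply verbatim to $(\cW,\mathcal{D}(\cW))$. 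Alternatively, the unitary dilation $U_\kappa:f\mapsto\kappa^{d/2}f(\kappa\,\cdot\,)$ preserves $C_c^1(\Rd)$ and intertwines the two forms up to scalar factors, so regularity transfers directly. The main obstacle is simply accurate bookkeeping of the two Jacobians arising from the time change (factor $\phi(\kappa)$) and the spatial rescaling (factors $\kappa$ and $\kappa^{-d}$); once these are combined correctly into $\kappa\phi(\kappa)$ and the weak-scaling invariance of $\phi_\kappa$ is recognized, no new probabilistic or analytic ingredient beyond \autoref{subsec:Dirichlet} is required.
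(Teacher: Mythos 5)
Your proposal is correct and follows essentially the same route as the paper: both derive the identity $\cW(f,f)=\kappa^{-d}\phi(\kappa)\,\cE(g,g)$ with $g=f(\kappa^{-1}\cdot)$ by passing the approximating form $\mathcal{E}_t^{(\kappa)}$ through the spatial dilation and then letting $t\downarrow 0$, after which a single change of variables extracts the factor $\kappa\phi(\kappa)$ and identifies $J^{(\kappa)}$. The only cosmetic difference is that the paper writes the intermediate form as a double integral against $q^{(\kappa)}(t,x,y)$ rather than as $t^{-1}\langle f-Q_t^{(\kappa)}f,f\rangle$ directly, and is terser about regularity, which you (correctly) fill in by observing that $J^{(\kappa)}$ satisfies the same two-sided comparison with respect to $\phi^{(\kappa)}$ obeying $\bf(WS)$ with unchanged constants, so \autoref{thm:regular} applies verbatim.
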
 
\begin{proof}
	For any $t>0$, consider a bilinear form
	\begin{align*}
	\mathcal{E}_t(f, f)
	&:=\frac{1}{2t}\int_\Rd\int_\Rd (f(y)-f(x))^2 p(t, x, y) dydx\,.
	\end{align*}
	By \eqref{eq:scail_cut}--\eqref{eq:bili_cut}, using the change of variables $w=\kappa x$ and $v=\kappa y$ with $g(z)=f(\kappa^{-1}z)$, we have that
	\begin{align*}
	\mathcal{E}_t^{(\kappa)}(f, f)&=\frac{1}{2t}\int_\Rd\int_\Rd (f(y)-f(x))^2 \kappa^d p(\phi(\kappa) t, \kappa x, \kappa y) dydx\\
	&=\frac{\kappa^{-d}}{2t}\int_\Rd\int_\Rd (g(v)-g(w))^2 p(\phi(\kappa) t, w, v) dvdw=\phi(\kappa)\kappa^{-d}\mathcal{E}_{\phi(\kappa)t}(g ,g).
	\end{align*}
	Since $\mathcal{E}_{\phi(\kappa)t}(g ,g)\to \mathcal{E}(g,g)$ as $t\to 0$
	where
	\begin{align*}
	\mathcal{E}(g,g)\asymp 	\mathcal{E}^{\phi}(g,g)&=\int_\Rd\sum_{i=1}^d\int_\R \frac{(g(w+e^i \xi)-g(w))^2}{|\xi|\phi(|\xi|)} d\xi dw\nn\\
	&=\int_\Rd\sum_{i=1}^d\int_\R \frac{(f(\kappa^{-1}(w+e^i \xi))-f(\kappa^{-1}w))^2}{|\xi|\phi(|\xi|)} d\xi dw\\
	&=\kappa^{1+d}\int_\Rd\sum_{i=1}^d\int_\R \frac{(f(x+e^iu))-f(x))^2}{|\kappa u|\phi(|\kappa u|)} du dx,
	\end{align*}
	we have that 
	\begin{align*}
	\mathcal{E}_t^{(\kappa)}(f, f)\to \cW(f, f)=
	\int_\Rd\int_\Rd (f(y)-f(x))^2J^{(\kappa)}(x, y)dx dy,
	\quad \text{ as } t\to 0,
	\end{align*}
	with the jump kernel
	$J^{(\kappa)}(x, y):=\kappa \phi(\kappa)  J(\kappa x, \kappa y)$.
	This follows that $(\cW, \mathcal{D}(\cW))$ is in the class of  $(\mathcal{E}, \mathcal{F})$ corresponding to $Y^{(\kappa)}$, and therefore we obtain our assertion.
\end{proof}

\begin{remark}\label{r:kappa}
By the definition of $J^{(\kappa)}(x,y)=\kappa \phi(\kappa)  J(\kappa x, \kappa y)$, 
	\begin{align}\label{J_kappa}
\small J^{(\kappa)}(x, y)\asymp\begin{cases}
	|x^i-y^i|^{-1}\phi^{(\kappa)}(|x^i-y^i|)^{-1}
	&\small\text {if $x^i \ne y^i$ for some $i$; }\text{$x^j = y^j$ for all $j \ne i$,}\\
	0&\text{otherwise},
	\end{cases}
		\end{align}
	where $\phi^{(\kappa)}(r):=\phi(\kappa r)/\phi(\kappa)$ satisfies {\bf(WS)}.
	So	\autoref{p:on_upper} with \eqref{eq:scail_cut} yields 
	\begin{align*}
	q^{(\kappa)}(t, x, y)\le \frac{c\kappa^d}{[\phi^{-1}(\phi(\kappa)t)]^d}= \frac{c}{[(\phi^{(\kappa)})^{-1}(t)]^d}\quad\text { for } t>0,\  x, y\in \Rd\backslash {\kappa^{-1}\cN},
	\end{align*}
	where $(\phi^{(\kappa)})^{-1}$ is the inverse of $\phi^{(\kappa)}$.
\end{remark}
\medskip

We also introduce the truncated processes. Let $\lambda>0$. Consider the  jump kernel  $J_\lambda(x ,y):=J(x ,y)\1_{\{|x-y|\le \lambda\}}$, and the biliniear form 
%$(\mathcal{E}^{\lambda}, \mathcal{F}^{\lambda})$ where  
	\begin{align*}
	\mathcal{E}^{\lambda} (u,v) :=\int_{\Rd}\Big(\sum_{i=1}^d\int_{\R}\big(u(x+e^i h) - u(x)\big)\big(v(x+e^i h) -  v(x)\big) J_{\lambda} (x,x+e^i h) d h \Big) d x .
	\end{align*}
Then {\bf (WS)} implies
\begin{align}\label{eq:E_EL}
0\le \EE(u, u)-\EE^\lambda(u,u)
=&\int_{\Rd}\Big(\sum_{i=1}^d\int_{|h|\ge \lambda}\big(u(x+e^i h) - u(x)\big)^2\frac{1}{|h|\phi(|h|)} dh \Big) d x\nn\\
\le & \ \frac{c_1}{\phi(\lambda)} \|u\|_2^2\int_\lambda^\infty |h|^{-1-\la} dh \le \frac{c_1}{\la \phi(\lambda)}\|u\|_2^2,
\end{align}
therefore,
\begin{align}\label{eq:comE}
 \EE^\lambda_1(u, u)\le \EE_1(u, u)\le(1+c_2) \EE^\lambda_1(u, u)\qquad \text{ for every } u\in \FF.
\end{align}
It follows that $(\EE^\lambda, \FF)$ is a regular Dirichlet form on $L^2(\Rd)$, so there is a Hunt processes $X^\lambda$ corresponding to $(\EE^\lambda, \FF)$.
In addition, \autoref{propNash} with \eqref{eq:comE} implies $X^\lambda$ has a transition density function $p_\lambda(t, x, y)$.
By the similar proof of the on-diagonal upper bounds in \autoref{p:on_upper} and {\bf(WS)},  there exists an exceptional set $\cN$ and a constant $c>0$ such that  
\begin{align}\label{eq:neardia_lam}
	p_\lambda(t, x, y)\le c [\phi^{-1}(t)]^{-d}\quad\text { for } t>0,\  x, y\in \Rd\backslash \cN.
\end{align}
	
\medskip
Now we give the proof of the conservativeness for $X$ in \autoref{mainthm}.
\begin{theorem}\label{thm:cons}
	The process $X$ is conservative, that is, $X$ has infinite lifetime.
\end{theorem}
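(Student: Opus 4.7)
The strategy is to reduce the conservativeness of $X$ to that of the truncated processes $X^\lambda$, whose existence and on-diagonal bound were recorded in \eqref{eq:neardia_lam}, and then to pass to the limit $\lambda\to\infty$ via Meyer's construction.

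First, I would show $X^\lambda$ is conservative for each fixed $\lambda>0$ using a test-function argument. Pick $\chi\in C_c^\infty(\Rd)$ radial with $0\le\chi\le 1$, $\chi\equiv 1$ on $B(0,1)$ and $\supp\chi\subset B(0,2)$, and set $\varphi_n(x):=\chi(x/n)$. Since the jumps of $X^\lambda$ are bounded by $\lambda$, for $n\ge\lambda$ the associated generator satisfies
\[
|\cL^\lambda\varphi_n(x)|\;\le\; C\sum_{i=1}^d\int_{|\tau|\le\lambda} \frac{|\varphi_n(x+e^i\tau)-\varphi_n(x)|}{|\tau|\,\phi(|\tau|)}\,d\tau.
\]
Using the Lipschitz bound $|\varphi_n(x+e^i\tau)-\varphi_n(x)|\le\|\nabla\chi\|_\infty|\tau|/n$, together with {\bf(WS)} and a second-order Taylor cancellation against the even majorant $J^\phi$ when $\ua\ge 1$, one obtains $\|\cL^\lambda\varphi_n\|_\infty\to 0$ as $n\to\infty$. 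The Fukushima--Dynkin identity then gives
\[
\E^x[\varphi_n(X_t^\lambda)] \;=\; \varphi_n(x)+\E^x\!\Bigl[\int_0^t \cL^\lambda\varphi_n(X_s^\lambda)\,ds\Bigr] \;\ge\; \varphi_n(x)-t\,\|\cL^\lambda\varphi_n\|_\infty,
\]
and passing $n\to\infty$ with dominated convergence yields $\Pp^x(X_t^\lambda\in\Rd)=1$, so $X^\lambda$ is conservative.

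Next I would transfer conservativeness from $X^\lambda$ to $X$. The residual jump kernel $J(x,y)\1_{\{|y-x|>\lambda\}}$ has uniformly bounded total intensity: by {\bf(WS)},
\[
\sup_{x\in\Rd}\int_{|z-x|>\lambda} J(x,z)\,dz \;\le\; \Lambda\sum_{i=1}^d\int_{|h|>\lambda}\frac{dh}{|h|\phi(|h|)} \;\le\; \frac{c}{\phi(\lambda)}<\infty.
\]
Meyer's construction then realizes $X$ by superimposing on $X^\lambda$ additional jumps at exponential times whose rate is bounded by $c/\phi(\lambda)$, distributed (after normalization) according to $J(X_{s-},\cdot)\1_{\{|\cdot - X_{s-}|>\lambda\}}$. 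Every such jump lands inside $\Rd$, so no mass escapes to the cemetery and conservativeness is preserved. Uniqueness of the Hunt process associated to the regular Dirichlet form $(\cE,\FF)$ identifies this construction with our $X$, and the theorem follows.

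The main obstacle is the generator estimate $\|\cL^\lambda\varphi_n\|_\infty\to 0$ when $\ua\ge 1$: the component of $J(x,x+e^i\tau)$ that is not even in $\tau$ contributes a first-order drift whose naive bound would diverge. Handling it uses the compact support of $\chi$, the pointwise comparison $J\le\Lambda J^\phi$, and the second-order cancellation inherited from the evenness of $J^\phi$, all tied together by {\bf(WS)}. The truncation to $|\tau|\le\lambda$ is exactly what makes this drift a priori finite, which is why it is advantageous to argue via $X^\lambda$ rather than directly with $X$.
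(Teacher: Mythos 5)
Your overall skeleton matches the paper's proof: establish conservativeness for a truncated process with bounded jump range, then pass to $X$ via Meyer's construction since the residual long-range intensity $\sup_x\int_{|z-x|>\lambda}J(x,z)\,dz\lesssim 1/\phi(\lambda)$ is finite. That second step is exactly what the paper does. The difference is in the first step: the paper simply invokes \cite[Theorem 2.4]{MU11} for conservativeness of the truncated process $X^1$, whereas you try to prove it from scratch with a Dynkin-formula/test-function argument, and there the proposal has a genuine gap.

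The problematic step is the claim that $\|\cL^\lambda\varphi_n\|_\infty\to 0$. You correctly note that when $\ua\ge 1$ the first-order term in the Taylor expansion of $\varphi_n$ is not absolutely integrable against $J^\phi$ on $\{|\tau|\le\lambda\}$: indeed
\[
\int_{0}^{\lambda}\frac{|\tau|}{n}\,\frac{d\tau}{|\tau|\phi(|\tau|)}=\frac1n\int_0^\lambda\frac{d\tau}{\phi(\tau)}
\]
diverges in general once $\ua\ge1$. For the even reference kernel $J^\phi(x,x+e^i\tau)$ this is harmless because the odd first-order term cancels under $\tau\mapsto-\tau$, but the actual kernel $J$ satisfies only the exchange symmetry $J(x,y)=J(y,x)$ and the two-sided comparison $J\asymp J^\phi$; it need \emph{not} be even in $\tau$ at fixed $x$. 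Writing $J(x,x+e^i\tau)=J^{\mathrm{even}}_i(x,\tau)+J^{\mathrm{odd}}_i(x,\tau)$, the odd part is only controlled by $|J^{\mathrm{odd}}_i|\le\Lambda J^\phi$, which is a size bound, not a smallness or cancellation statement, and it reintroduces exactly the divergent first-order integral. ``Second-order cancellation inherited from the evenness of $J^\phi$'' does not transfer to $J$ under comparability alone; it would require additional regularity of $J$ in the $y$-variable (e.g.\ a H\"older or Dini modulus), which the assumption \eqref{eq:J-ellipticity-assum} does not supply. Moreover, if the pointwise integral for $\cL^\lambda\varphi_n$ is not absolutely convergent, it is not even clear that $\varphi_n$ lies in the domain of the $L^2$-generator with the stated pointwise representation, so the Fukushima--Dynkin identity cannot be applied in the form you use. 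This is precisely the obstruction that makes the paper fall back on the external criterion of Masamune--Uemura, whose proof works at the level of the quadratic form (where only squared differences appear and the odd part never enters) rather than at the level of a pointwise generator estimate. As written, the first step of your proof is therefore incomplete for $\ua\ge 1$.
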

\begin{proof}
	By \cite[Theorem 2.4]{MU11}, 
	the truncated process $X^1$ corresponding to $(\mathcal{E}^1, \mathcal{F} )$  is conservative.
	Note that $J_1(x, y)\le J(x, y)$, and for
    \begin{align}\label{eq:cJ}
	\mathfrak J_1(x,y):=J(x,y)-J_1(x,y)
	\end{align}
	$\int_\Rd \mathfrak J_1(x,y) dy\le c \int_{\{|u|\ge 1\}}\frac{1}{ |u|\phi(|u|)}du<\infty$.
	By the Meyer's construction(see, \cite[Section 4.1]{ChKu08}), the processes $X$ can be constructed from $X^1$.
	Therefore, the conservativeness of $X$ follows from the conservativeness of $X^1$.
\end{proof}

\medskip

In the following, we first obtain the distribution of exit times for the
$\kappa$-scaled process $Y^{(\kappa)}$, and then present that of exit times for $X$ in \autoref{cor:exit_new}. 
 For any Borel set $A\subset\R^d$,  denote by $\tau_A^{(\kappa)}:=\inf\{t>0:Y_t^{(\kappa)}\notin A\}$  the first exit time of $Y^{(\kappa)}$ from $A$ and the following proposition gives the distribution of $\tau_\cdot^{(\kappa)}$.
\begin{proposition}\label{p:exit_new}
	Let $T>0$. Then there exists a positive constant $c=c(T, \phi)>0$ such that
	\begin{align*}
	\Pp^x\left(\tau^{(\kappa)}_{B(x, r)}< 1\right)\le \frac{c}{\phi^{(\kappa)}(r)}
	\end{align*}
	for all $r\in [T, \infty)$ and $x\in \Rd\backslash\kappa^{-1}\cN$.
\end{proposition}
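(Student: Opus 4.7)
My approach is to apply Dynkin's formula to a radial cutoff test function adapted to the scale $r$. Fix a $C^2$ function $\chi\colon\Rd\to[0,1]$ with $\chi\equiv 0$ on $B(0,1/2)$ and $\chi\equiv 1$ outside $B(0,1)$, and set $\chi_r(y):=\chi((y-x)/r)$. Then $\chi_r(x)=0$ and, since $Y^{(\kappa)}$ is a pure-jump process, $\chi_r(Y^{(\kappa)}_\tau)=1$ on $\{\tau\le 1\}$ where $\tau:=\tau^{(\kappa)}_{B(x,r)}$. Granted $\|\cL^{(\kappa)}\chi_r\|_\infty\le c/\phi^{(\kappa)}(r)$, Dynkin's formula applied at the bounded stopping time $\tau\wedge 1$ yields
$$
\Pp^x(\tau\le 1)\le \E^x[\chi_r(Y^{(\kappa)}_{\tau\wedge 1})]=\E^x\!\left[\int_0^{\tau\wedge 1}\cL^{(\kappa)}\chi_r(Y^{(\kappa)}_s)\,ds\right]\le\|\cL^{(\kappa)}\chi_r\|_\infty\le\frac{c}{\phi^{(\kappa)}(r)},
$$
which is the desired estimate. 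This martingale identity is legitimate because $\chi_r$ is bounded and smooth with bounded generator, and the L\'evy system \eqref{eq:LSd} transferred to $Y^{(\kappa)}$ identifies the predictable compensator of $\chi_r(Y^{(\kappa)}_\cdot)-\chi_r(Y^{(\kappa)}_0)$ as $\int_0^\cdot\cL^{(\kappa)}\chi_r(Y^{(\kappa)}_s)\,ds$.

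The heart of the proof is the generator bound. Using the coordinate-wise symmetry of $J^{(\kappa)}(y,\cdot)$, one writes
$$
\cL^{(\kappa)}\chi_r(y)=\tfrac12\sum_{i=1}^d\int_\R\bigl(\chi_r(y+e^ih)+\chi_r(y-e^ih)-2\chi_r(y)\bigr)J^{(\kappa)}(y,y+e^ih)\,dh,
$$
and splits the $h$-integral at $|h|=r$. On $\{|h|\le r\}$ a Taylor expansion bounds the second difference by $c\|\chi''\|_\infty h^2/r^2$; combining this with $J^{(\kappa)}(y,y+e^ih)\asymp(|h|\phi^{(\kappa)}(|h|))^{-1}$ from \autoref{r:kappa} and the upper scaling $\phi^{(\kappa)}(h)\ge c(h/r)^{\ua}\phi^{(\kappa)}(r)$ from \textbf{(WS)}, the resulting integral converges because $\ua<2$ and is at most $c/\phi^{(\kappa)}(r)$. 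On $\{|h|>r\}$ the crude bound $|\chi_r(y\pm e^ih)-\chi_r(y)|\le 2$ together with the lower scaling $\phi^{(\kappa)}(h)\ge c(h/r)^{\la}\phi^{(\kappa)}(r)$ (and $\la>0$) yields the same order of bound. Adding the two pieces gives $\|\cL^{(\kappa)}\chi_r\|_\infty\le c/\phi^{(\kappa)}(r)$, with a constant depending only on $\chi,\lC,\uC,\la,\ua,\Lambda$; crucially it is independent of $\kappa$, since \autoref{r:kappa} records that $\phi^{(\kappa)}$ satisfies \textbf{(WS)} with the same constants as $\phi$.

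The main technical obstacle is this generator estimate. A naive first-order bound $|\chi_r(y+e^ih)-\chi_r(y)|\le c|h|/r$ against the singular density $(|h|\phi^{(\kappa)}(|h|))^{-1}$ produces an integrand of order $|h|^{-\ua}/r$ near zero, which fails to be integrable whenever $\ua\ge 1$. Passing to the symmetric second difference restores the extra $|h|/r$ factor, which is precisely what is needed to achieve integrability throughout $\ua<2$. The restriction $r\in[T,\infty)$ in the statement is essentially cosmetic: the bound $c/\phi^{(\kappa)}(r)$ is only useful once $r$ is bounded below, and allowing $c=c(T,\phi)$ absorbs the $T$-dependent constants uniformly.
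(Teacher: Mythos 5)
Your Dynkin-formula approach is appealingly elementary, but the crucial generator bound rests on a symmetry the paper does not assume. The second-difference representation
\begin{align*}
\cL^{(\kappa)}\chi_r(y)=\tfrac12\sum_{i=1}^d\int_\R\bigl(\chi_r(y+e^ih)+\chi_r(y-e^ih)-2\chi_r(y)\bigr)J^{(\kappa)}(y,y+e^ih)\,dh
\end{align*}
is equivalent to the principal-value generator formula only when $J^{(\kappa)}(y,y+e^ih)=J^{(\kappa)}(y,y-e^ih)$. The hypothesis \eqref{eq:J-ellipticity-assum} requires only that $J$ be symmetric as a function of the unordered pair $(x,y)$ and comparable to the even kernel $J^\phi$; it does not force $J(x,x+e^ih)=J(x,x-e^ih)$. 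Writing $J_i(y,h):=J^{(\kappa)}(y,y+e^ih)=J_i^{\mathrm{sym}}(y,h)+J_i^{\mathrm{anti}}(y,h)$ with $J_i^{\mathrm{anti}}$ odd in $h$, the odd part is controlled only by $c(|h|\phi^{(\kappa)}(|h|))^{-1}$ and admits no Taylor cancellation, so its small-$h$ contribution is bounded merely by
$\tfrac{c}{r}\int_{|h|\le r}\phi^{(\kappa)}(|h|)^{-1}\,dh$,
which diverges whenever $\ua\ge 1$ --- exactly the failure mode you correctly flagged for the naive first-order estimate. Thus your $\|\cL^{(\kappa)}\chi_r\|_\infty\le c/\phi^{(\kappa)}(r)$ is not established under the paper's actual assumptions, and in fact for a kernel satisfying only $J(x,y)=J(y,x)$ the pointwise generator need not exist on $C_c^2$ without a drift compensation.

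Relatedly, invoking the L\'evy system \eqref{eq:LSd} to identify the compensator of $\chi_r(Y^{(\kappa)}_\cdot)-\chi_r(Y^{(\kappa)}_0)$ as $\int_0^\cdot\cL^{(\kappa)}\chi_r(Y^{(\kappa)}_s)\,ds$ is not immediate: the L\'evy system controls jump functionals $\sum_{s\le S}f(s,X_{s-},X_s)$, and passing to a Fukushima decomposition of $\chi_r(Y^{(\kappa)})$ with a zero-energy part of the form $\int\cL^{(\kappa)}\chi_r\,ds$ requires precisely the membership of $\chi_r$ in the $L^2$-generator domain with a pointwise formula --- the step that fails above. This is why the paper goes through the heavier machinery: Nash inequality for an on-diagonal bound of a $\lambda$-truncated process, the Carlen--Kusuoka--Stroock/Davies exponential off-diagonal estimate for $q_\lambda^{(\kappa)}$, a careful choice of $\lambda$ and $\beta$, and finally Meyer's construction to pass from the truncated to the full process. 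Your argument would be a clean replacement \emph{if} one additionally assumed coordinate-direction symmetry $J(x,x+e^ih)=J(x,x-e^ih)$, but that is a strictly stronger hypothesis than the one in the paper.
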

\begin{proof}
For any $\lambda>0$, we consider the truncated jump kernel $J^{(\kappa)}_\lambda(x ,y):=J^{(\kappa)}(x ,y)\1_{\{|x-y|\le \lambda\}}$ for $Y^{(\kappa)}$.
	Using the similar argument as in \eqref{eq:E_EL} with $\cW^\lambda(u, u):=\int (u(x)-u(y))^2 J^{(\kappa)}_\lambda(x, y) dx dy$,
	we obtain 
	\begin{align}\label{eq:W_WL}
	0\le \cW(u, u)-\cW^\lambda(u,u) \le \frac{c_1}{\la \phi^{(\kappa)}(\lambda)}\|u\|_2^2,
	\end{align}
	so that $\cW^\lambda_1(u, u)\asymp \cW_1(u, u)$ for $u\in \mathcal{D}(\cW)$.
	Therefore, $(\cW^\lambda, \mathcal{D}(\cW))$ is a regular Dirichlet form and there is a Hunt process $Y^{(\kappa),\lambda}$ corresponding to $(\cW^\lambda, \mathcal{D}(\cW))$. 
	In addition, by \eqref{eq:W_WL} and the same observation of \autoref{propNash} corresponding to $\cW$,  Nash-type inequality holds, and there exists a transition density $q_\lambda^{(\kappa)}(t, x, y)$ for $Y^{(\kappa), \lambda}$. Also by  \eqref{eq:scail_cut}, \eqref{eq:neardia_lam} and {\bf(WS)} condition of $\phi^{(\kappa)}(r)$, we have that for any $x, y\in \Rd\backslash\kappa^{-1}\cN$ and $t\in (0, 2]$,
	\begin{align}\label{eq:neardia_lam_ka}
	q^{(\kappa)}_\lambda(t, x, y)\le c_1 [(\phi^{(\kappa)})^{-1}(t)]^{-d} \le c_2t^{-d/\la}
	\end{align}
for some $c_2:=c_2(\phi)>0$.
	Then \eqref{eq:neardia_lam_ka} together with \cite[Theorem 3.2]{MR2465826} and
	\cite[Theorem 3.25]{CKS87}  implies that there exist constants $c_3, c_4>0$ such that 
	\begin{align}\label{eq:pka}
	q^{(\kappa)}_\lambda(t, x, y)\le c_3 t^{-d/\la}\exp\left(-|\chi(y)-\chi(x)|+c_4 \Gamma(\chi)^2 t\right)
	\end{align}
	for all $t\in (0, 2]$ and $x, y\in \Rd\backslash\kappa^{-1}\cN$.
	Here
	\begin{align*}
	\chi(\xi)&:=\frac\beta{3}\big(|\xi-x|\wedge |x-y|\big),\qquad\qquad\qquad\qquad\quad   \xi\in \Rd,\\
	\Gamma(\chi)^2&:=\|e^{-2\chi}\Gamma_{\lambda}(e^{\chi})\|_{\infty}\vee \|e^{-2\chi}\Gamma_{\lambda}(e^{-\chi})\|_{\infty},\\
	\Gamma_{\lambda}(v)(\xi)&:=\int_{\{|\xi-\eta|\le \lambda\}}\big(v(\xi)-v(\eta)\big)^2 J^{(\kappa)}(\xi, \eta) d\eta \,,\qquad \xi\in \Rd.
	\end{align*}
	We will choose the constant $\beta>0$ later.
	Since $|\chi(\eta)-\chi(\xi)|\le (\beta/3)|\eta-\xi|$ for $\xi, \eta\in \Rd$, by \eqref{J_kappa} and {\bf(WS)} condition of $\phi^{(\kappa)}$,
	we have that for $\xi\in \Rd$,
	\begin{align*}
	\left(e^{-2\chi}\Gamma_{\lambda}(e^{\chi})\right)(\xi)
	=& \ c_5\ \int_{\{|\eta-\xi|\le \lambda\}}
	(1-e^{\chi(\eta)-\chi(\xi)})^2J^{(\kappa)}(\xi, \eta) d\eta\\
	\le &\ c_6 \sum_{i=1}^d \int_{\{|h|\le \lambda\}}({\chi(\xi+he^i)-\chi(\xi)})^2
	e^{2|\chi(\xi+he^i)-\chi(\xi)|}J^{(\kappa)}(\xi, \xi+he^i) dh\\
	\le &\ c_7 \left(\frac\beta{3}\right)^2e^{2\beta\lambda/3}\int_{\{|h|\le \lambda\}}\frac{|h|}{\phi^{(\kappa)}(|h|)} dh	\le  \ c_8e^{2\lambda/3}\frac{(\beta\lambda)^2}{\phi^{(\kappa)}(\lambda)}\le  \ c_9  \frac{e^{\beta\lambda}}{\phi^{(\kappa)}(\lambda)}.
	\end{align*}
	Similarly, we obtain the upper bound of $\left(e^{2\chi}\Gamma_{\lambda}(e^{-\chi})\right)(\xi)$. 	
Now  for any $r, t>0$, we choose 
	$$\lambda:=\frac{r\la}{3(d+\la)}\quad\text{and }\quad \beta:=\frac{1}{\lambda}\log\left(\frac{\phi^{(\kappa)}(\lambda)}{t}\right).$$
Then 
	\begin{align}\label{eq:exp}
-|\chi(y)-\chi(x)|+c_4 \Gamma(\chi)^2 t\le \frac{-\beta|x-y|}{3}+\frac{c_{10} t e^{\beta\lambda}}{\phi^{(\kappa)}(\lambda)}= \log \left(\frac{t}{\phi^{(\kappa)}(\lambda)}\right)^{\frac{|x-y|}{3\lambda}}+c_{10}.
\end{align}	
Therefore,  \eqref{eq:pka} and \eqref{eq:exp} with $\lambda:=\frac{r\la}{3(d+\la)}$  imply that for $x, y\in \Rd\backslash\kappa^{-1}\cN$ and $t\in (0, 2]$,
\begin{align*}
q^{(\kappa)}_\lambda(t, x, y)\le c_{11} t^{-d/\la}\left(\frac{t}{\phi^{(\kappa)}(\lambda)}\right)^{\frac{d+\la}{\la}\cdot\frac{|x-y|}{r}}.
\end{align*}
	Let $\zeta^{(\kappa),\lambda}$ be the lifetime of $Y^{(\kappa), \lambda}$.
Using the same proof in  \autoref{thm:cons}, $Y^{(\kappa), \lambda}$ is conservative, and $\Pp^x(\zeta^{(\kappa),\lambda}\le r)=0$ for any $r>0$.
Let $T>0$. Then for any $x\in \Rd$, $ t\in(0, 2)$
 and $r\in [T, \infty)$,
\begin{align*}
\Pp^x\big(|Y^{(\kappa), \lambda}_{t}-x|\ge r\big)&=\int_{B(x, r)^c} q^{(\kappa)}_\lambda(t, x, y) dy +\Pp^x({\zeta^{(\kappa),\lambda}}\le r)\nn\\
&\le c_{11} t^{-d/\la} \int_{B(x, r)^c} \left(\frac{t}{\phi^{(\kappa)}(\lambda)}\right)^{\frac{d+\la}{\la}\cdot\frac{|x-y|}{r}}dy \le \frac{c_{12}tr^d}{\phi^{(\kappa)}(\lambda)^{d/\la+1}}\le \frac{c_{13}t}{\phi^{(\kappa)}(r)}.
\end{align*}
The last inequality follows from $\lambda=\frac{r\la}{3(d+\la)}$ and \textbf{(WS)} for $\phi^{(\kappa)}$. 

Let $\sigma_r:=\sigma_r^{(\kappa),\lambda}:=\inf\{s\ge 0:|Y^{(\kappa), \lambda}_{s}-Y^{(\kappa), \lambda}_{0}|>r\}$. Then the strong Markov property implies that
\begin{align*}
\Pp^x\big(\sigma_r<1, |Y^{(\kappa), \lambda}_{2}-x|\le r/2\big)&\le 
\Pp^x\big(\sigma_r<1, |Y^{(\kappa), \lambda}_{2}-Y^{(\kappa), \lambda}_{\sigma_r}|> r/2\big)\nn\\
&\le \E^x\left[\1_{\{\sigma_r<1\}}\ \Pp^{Y^{(\kappa), \lambda}_{\sigma_r}}( |Y^{(\kappa), \lambda}_{2-\sigma_r}-x|> r/2)\right]\nn\\
&\le \sup_{\substack{l<1\ \&\\ y\in\{z\in\Rd:|x-z|>r\} }} \Pp^y\big(|Y^{(\kappa), \lambda}_{2-l}-x|>r/2\big).
\end{align*}
Therefore, {\bf (WS)} for $\phi^{(\kappa)}$ with the above two inequalities yields that
for any  $x\in \Rd$ and $r\in [T, \infty)$,
\begin{align*}
\Pp^x\big(\sup_{s\le 1}|Y^{(\kappa),\lambda}_s-x|>r\big)&=
\Pp^x\left(\sigma_r<1\right)\nn\\
&\le \Pp^x\big(\sigma_r<1, |Y^{(\kappa), \lambda}_{2}-x|\le r/2\big)+\Pp^x\big(|Y^{(\kappa), \lambda}_{2}-x|\ge r/2\big)\nn\\   
&\le \sup_{\substack{y\in\{z\in\Rd:|x-z|>r\}\\ l<1 }} \Pp^y\big(|Y^{(\kappa), \lambda}_{2-l}-y|>r/2\big)+\frac{2c_{13}}{\phi^{(\kappa)}(r/2)}\le \frac{c_{14}}{\phi^{(\kappa)}(r)}.
\end{align*}
Also note that
\begin{align*}
	\Pp^x\big(Y^{(\kappa)}\neq Y^{(\kappa),\lambda}\, \text{ for some }s\le t\big)
	\le t\sup_{x'}\int\big|J^{(\kappa)}(x', y)-J^{(\kappa)}_\lambda(x', y)\big|dy\leq \frac{c_{15}t}{\phi^{(\kappa)}(\lambda)}\leq \frac{c_{16}t}{\phi^{(\kappa)}(r)}.
\end{align*}
Therefore,	for any $x\in \Rd$ and $r\in [T, \infty)$,
\begin{align*}
\Pp^x\left(\tau^{(\kappa)}_{B(x, r)}< 1\right)&= 	\Pp^x\big(\sup_{s\le 1}|Y^{(\kappa)}_s-Y^{(\kappa)}_0|>r\big)\\
&\le \Pp^x\big(\sup_{s\le 1}|Y^{(\kappa),\lambda}_s-x|>r\big)+\Pp^x\big(Y^{(\kappa)}\neq Y^{(\kappa),\lambda}\, \text{ for some }s\le 1\big)\\
&\le   \frac{c_{17}}{\phi^{(\kappa)}(r)}.	
\end{align*}

\end{proof}

Let $\tau_A:=\inf\{t>0:X_t\notin A\}$ be the first exit time of $X$ from $A$. Using the relation between $Y^{(\kappa)}$ and $X$, we have the following distribution for $\tau_\cdot$.
\begin{corollary}\label{cor:exit_new}
Let $T>0$. Then there exists $c=c(T, \phi)>0$ such that for any $x\in \Rd\backslash\cN$, $r\in [T, \infty)$ and $t>0$,
% with $\kappa=\phi^{-1}(t)$,
$$\Pp^x\left(\tau_{B(x, r \phi^{-1}(t))}<t\right)\leq \frac{c t}{\phi(\phi^{-1}(t)r)}.$$
\end{corollary}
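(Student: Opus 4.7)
The plan is to deduce \autoref{cor:exit_new} from \autoref{p:exit_new} via the rescaling $Y^{(\kappa)}_s = \kappa^{-1} X_{\phi(\kappa) s}$, simply by choosing $\kappa$ correctly.

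First I would fix $t>0$ and set $\kappa := \phi^{-1}(t)$, so that $\phi(\kappa) = t$. Then I would translate the exit event for $X$ into one for $Y^{(\kappa)}$: since $Y^{(\kappa)}_s = \kappa^{-1} X_{t s}$, the path $s \mapsto Y^{(\kappa)}_s$ stays in $B(\kappa^{-1}x, r)$ for all $s < 1$ if and only if $u \mapsto X_u$ stays in $B(x, \kappa r) = B(x, r\phi^{-1}(t))$ for all $u < t$. Hence, writing $\tau^{(\kappa)}_{A}$ for the exit time of $Y^{(\kappa)}$ from $A$ as in \autoref{p:exit_new},
\begin{equation*}
\Pp^{x}\bigl(\tau_{B(x, r\phi^{-1}(t))} < t\bigr)
= \Pp^{\kappa^{-1} x}\bigl(\tau^{(\kappa)}_{B(\kappa^{-1} x, r)} < 1\bigr),
\end{equation*}
using that $X_0 = x$ under $\Pp^x$ if and only if $Y^{(\kappa)}_0 = \kappa^{-1} x$ under the corresponding law.

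Next I would verify the hypotheses of \autoref{p:exit_new}. The condition $r \in [T,\infty)$ is preserved (the radius in the $Y^{(\kappa)}$-picture is the same $r$), and since the exceptional set for $Y^{(\kappa)}$ is $\kappa^{-1}\cN$ (by the definition of $Y^{(\kappa)}$ as a deterministic rescaling of $X$), the point $\kappa^{-1} x$ lies in $\Rd \setminus \kappa^{-1}\cN$ whenever $x \in \Rd \setminus \cN$. Then \autoref{p:exit_new} applies and yields
\begin{equation*}
\Pp^{\kappa^{-1} x}\bigl(\tau^{(\kappa)}_{B(\kappa^{-1} x, r)} < 1\bigr)
\leq \frac{c(T, \phi)}{\phi^{(\kappa)}(r)}.
\end{equation*}

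Finally I would substitute the definition $\phi^{(\kappa)}(r) = \phi(\kappa r)/\phi(\kappa) = \phi(\phi^{-1}(t)\, r)/t$ from \autoref{lem:4.2CK08}/\autoref{r:kappa}, which turns the right-hand side into $c t / \phi(\phi^{-1}(t) r)$, giving the claimed bound. There is essentially no obstacle here — the work has all been done in \autoref{p:exit_new}, and this corollary is a bookkeeping step that undoes the scaling; the only thing to check carefully is that the choice $\kappa = \phi^{-1}(t)$ correctly converts the $B(x, r\phi^{-1}(t))$-exit event into the unit-time exit event used in \autoref{p:exit_new}.
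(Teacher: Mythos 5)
Your argument is correct and follows exactly the paper's route: both rewrite the $X$-exit probability in terms of the $\kappa$-scaled process $Y^{(\kappa)}$, choose $\kappa=\phi^{-1}(t)$, invoke \autoref{p:exit_new}, and unwind the definition $\phi^{(\kappa)}(r)=\phi(\kappa r)/\phi(\kappa)$. The only cosmetic difference is which side of the change of variables is given a separate name (the paper writes $w=\kappa x$, you write $\kappa^{-1}x$); the content is identical.
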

\begin{proof}
For any $\kappa>0$, let $\tau^{(\kappa)}$ and $\tau$ be the first exit times for $Y^{(\kappa)}$ and $X$, receptively.
Then for any $x\in \Rd\backslash \kappa^{-1}\cN$ and $r>0$, we have that
	\begin{align}\label{eq:exit_ka_n}
	\Pp^x\left(\tau^{(\kappa)}_{B(x, r)}< 1\right)
	=&\	\Pp^x\left(\sup_{s\le 1}|Y^{(\kappa)}_s-x|>r\right)\nn\\
	%=\Pp^{\kappa x}\left(\sup_{u\le \phi(\kappa)}|\kappa Y^{(\kappa)}_{u/\phi(\kappa)}-\kappa x|>r\kappa \right)\nn\\
	=&\ \Pp^{ w}\left(\sup_{u\le \phi(\kappa)}|X_{u}-{w}|>r\kappa\right)
	=\Pp^w\left(\tau_{B(w, r \kappa )}<\phi(\kappa)\right)
	\end{align}
for $w\in \Rd\backslash \cN$.
 Let $T>0$. For any $t>0$, applying \autoref{p:exit_new} and \eqref{eq:exit_ka_n} with $\kappa=\phi^{-1}(t)$, there exists $c=c(T, \phi)>0$ such that for any $w\in \Rd\backslash \cN$ and $r\in [T, \infty)$,
	$$	\Pp^{w}\left(\tau_{B({ w}, r \phi^{-1}(t))}<t\right)
	= \Pp^x\left(\tau^{(\phi^{-1}(t))}_{B(x, r)}< 1\right)\le \frac{c}{\phi^{(\phi^{-1}(t))}(r)}=\frac{c t}{\phi(\phi^{-1}(t)r)}.$$
\end{proof}

\begin{remark}
	In the following sections, lower and upper heat kernel estimates for $p(t,x,y)$ will be given for $x,y\in\R^d\setminus \mathcal N$. In \autoref{thm:Holder}, we will show that $p(t,x,y)$ is H\"older continuous in $(x,y)$. Hence, it can be extended continuously to $\R_+\times\R^d\times\R^d$, which completes the proof of \autoref{mainthm}. In the remaining sections, for the convenience of discussions, we will directly write $\R^d$ instead of $\R^d\setminus\mathcal N$. 
\end{remark}

%%%%%%%%%%%%%%
\section{Upper bound estimates}\label{sec:uphke}

In this section, we introduce the upper bound estimates and give the sketch of the proof with a technical result \autoref{prop:main}. The proof of \autoref{prop:main} is similar to  \cite[Proposition 3.3]{KKKpre} with some modifications, we postpone it to Appendix.

\subsection{Upper bounds and sketch of the proof}
\begin{theorem}
	\label{thm:uphke}
	There is a positive constant $C=C(d,\Lambda, \phi)$ such that for all  $t>0$ and $x, y\in\R^d$ the following estimate holds: 
	\begin{align*}
	%%KYdelet	%\label{eq:upper-bd-mulit}
	p(t, x, y)\le C [\phi^{-1}(t)]^{-d}\prod_{i=1}^d\Big(1\wedge \frac{t\phi^{-1}(t)}{|x^i-y^i|\phi(|x^i-y^i|)}\Big)\,.
	\end{align*}
\end{theorem}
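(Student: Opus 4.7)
The plan is to first dispatch the near-diagonal regime, where all coordinate differences satisfy $|x^i - y^i| \le M\phi^{-1}(t)$ for some fixed large $M$; there, each factor $1 \wedge \frac{t\phi^{-1}(t)}{|x^i - y^i|\phi(|x^i - y^i|)}$ is bounded below by a positive constant depending only on $M$ and $\phi$ (via \textbf{(WS)}), so \autoref{p:on_upper} alone yields the claim. The genuine work lies in the off-diagonal regime.

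For the off-diagonal case, after relabeling coordinates so that $|x^1 - y^1| \ge \ldots \ge |x^d - y^d|$, fix the index $l \in \{1, \ldots, d\}$ for which $|x^i - y^i| > M\phi^{-1}(t)$ precisely when $i \le l$. Passing to the $\kappa$-scaled process $Y^{(\kappa)}$ of \autoref{subsec:upall} with $\kappa = \phi^{-1}(t)$ reduces the problem to bounding $q^{(\kappa)}(1, x, y)$, where the rescaled function $\phi^{(\kappa)}$ again satisfies \textbf{(WS)} with the same exponents; after this rescaling, the $l$ far coordinates have $|x^i - y^i| > M$ and the target factors in those coordinates become $\prod_{i=1}^l \bigl(|x^i - y^i|\,\phi^{(\kappa)}(|x^i - y^i|)\bigr)^{-1}$, while time has been normalized to $t=1$.

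The mechanism producing off-diagonal decay is the L\'evy system identity \eqref{eq:LSd} combined with \autoref{cor:exit_new}. Since jumps of $X$ occur only parallel to one coordinate axis at a time, any trajectory starting at $x$ and ending in a unit neighborhood of $y$ must execute at least one large jump of size comparable to $|x^i - y^i|$ in each of the $l$ far coordinate directions. Integrating the jump kernel, a single such jump in direction $i$ within unit time carries probability at most $c /(|x^i - y^i|\,\phi^{(\kappa)}(|x^i - y^i|))$. Stringing these estimates together across the $l$ far coordinates by iterated conditioning --- using the strong Markov property at the successive times the process first jumps over a coordinate threshold --- should manufacture the required $l$-fold product of off-diagonal factors, which, multiplied by the on-diagonal factor $[\phi^{-1}(t)]^{-d}$ coming from \autoref{p:on_upper} applied to the short final segment of the trajectory, delivers the claimed bound.

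The main obstacle is making this iteration rigorous while preserving the sharp product structure. Jumps of $X$ in different coordinates are not independent (only the reference L\'evy process $Z$ enjoys coordinate independence), and a naive single-jump estimate ruins integrability in the remaining coordinates when convolved with the on-diagonal piece. This is precisely what \autoref{prop:main} is designed to overcome, and the bulk of the proof is built around its conclusion. In carrying out the iteration one must condition on the positions and times of the $l$ large jumps in an ordered sequence, apply the strong Markov property at each, and carefully balance the power losses coming from \textbf{(WS)} when the intermediate positions live at distinct orders of magnitude. The refined exponents $\{\theta_l : l \in \{0, 1, \ldots, d-1\}\}$ and the bookkeeping notation $\mathfrak{N}(\delta)$ advertised in the introduction are the combinatorial devices that keep these distinct orders separated throughout the chaining argument; I expect this accounting to be the bulk of the technical effort and to be what forces \autoref{prop:main} into the appendix.
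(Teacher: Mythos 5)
The tools you name --- the L\'evy system, the exit-time bounds of \autoref{cor:exit_new}, the scaling of \autoref{subsec:upall}, and \autoref{prop:main} --- are indeed the tools the paper uses, but the mechanism you describe for combining them is not the one the paper employs, and as described it has a circularity that would stop the argument.

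You propose to ``string together'' single-jump estimates across the far coordinates by an iterated strong-Markov conditioning: condition on the ordered sequence of $l$ large jumps, use one jump-probability factor per far coordinate, and multiply at the end by the on-diagonal bound from \autoref{p:on_upper}. The obstruction you flag (dependence between coordinates, ``power losses from \textbf{(WS)}'') is real, but the deeper obstruction is that between consecutive large jumps the process wanders, and to control $P_{t-\tau}f(X_{\tau})$ after the first large jump you already need an off-diagonal estimate in the remaining coordinates --- precisely the thing you are trying to prove. A single pass of conditioning cannot break this circularity.

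The paper resolves it by an induction on the exponent, not by chaining over jumps. It introduces the family of intermediate conditions $\HHq{q}{l}$: an off-diagonal bound with power $q\in[0,1]$ in $d-l$ coordinates and power $1$ in the remaining $l$. \autoref{p:on_upper} gives $\HHq{0}{0}$ for free. Then, for each fixed $l$, \autoref{lem:Gl} (via \autoref{prop:main} and \autoref{lem:LBGK}) shows that $\HHq{q}{l}$ implies $\HHq{q+\theta_l}{l}$ as long as $q<\frac{1}{1+\la}$; once $q$ has been pushed above $\frac{1}{1+\la}$, \autoref{lem:Fl} upgrades $\HHq{q}{l}$ to $\HHq{0}{l+1}$, gaining a full power in one additional coordinate. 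Each application of \autoref{prop:main} is a \emph{single} exit-time argument (one application of \autoref{lem:LBGK}, not $l$ iterated ones), and the post-jump term $P_{t-\tau}f(X_{\tau})$ is bounded by the \emph{current} induction hypothesis $\HHq{q}{l}$, with $q<1$. The fractional power $q<1$ is exactly what makes the sum over the landing set $A_k$ of the first large jump converge. So $\theta_l$ is the increment in an outer bootstrap on the exponent, not a bookkeeping device for a chaining argument, and $\mathfrak N(\delta)$ encodes the $\phi$-rescaled mass of a dyadic shell --- it is how \textbf{(WS)} enters the geometric series, not a way of tracking $l$ separate jump times.

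You also omit \autoref{lem:LBGK}, the bilinear decomposition $\langle P_t f, g\rangle\le\langle\E^{\cdot}[\1_{\{\tau\le t/2\}}P_{t-\tau}f(X_\tau)],g\rangle+\langle\E^{\cdot}[\1_{\{\tau'\le t/2\}}P_{t-\tau'}g(X_{\tau'})],f\rangle$ with $f,g$ supported near $y_0$ and $x_0$ respectively. This pairing of exits from $B(x_0,\cdot)$ and $B(y_0,\cdot)$ is what reduces the theorem to the one-sided estimate in \autoref{prop:main}; without it, the event that the process never leaves $B(x_0,\cdot)$ before $t/2$ is unaccounted for. Your near-diagonal reduction and the identification of $\mathcal R(i_0)$-type cases agree in spirit with \autoref{lem:reduction_cases}, but the core of the argument --- the bootstrap on $q$ --- is missing from the proposal.
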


We first explain the sketch of the proof for \autoref{thm:uphke}. 
For any $q > 0$ and $l\in \{1, \ldots, d-1\}$,  consider the following conditions.

\medskip

\begin{itemize}
	\item[{\bf $\HHq{q}{0}$}]
	There exists a positive constant 
	$C_0=C_0(q,\Lambda, d, \phi)$ such that for all $t>0$ and $x, y\in \R^d$,
	\begin{align*}
	%\label{eq:H0}
	p(t, x, y)\le C_0 [\phi^{-1}(t)]^{-d}\prod_{i=1}^d\Big(1\wedge \frac{t\phi^{-1}(t)}{|x^i-y^i|\phi(|x^i-y^i|)}\Big)^{q}.
	\end{align*}
	\item[{\bf $\HHq{q}{l}$}]
	There exists a positive constant $C_l=C_l(l,q,\Lambda, d, \phi)$ such that for all $t>0$ and all $x, y\in \R^d$ with $|x^{1}-y^{1}| \leq \ldots \leq |x^{d}-y^{d}|$, the following holds true:
	\begin{align}
	p(t, x, y)&\le C_l  [\phi^{-1}(t)]^{-d}
	\nn\\
	&\times\prod_{i=1}^{d-l}\Big(1\wedge \frac{t\phi^{-1}(t)}{|x^i-y^i|\phi(|x^i-y^i|)}\Big)^{q}
	\prod_{i= d-l+1}^d\Big(1\wedge \frac{t\phi^{-1}(t)}{|x^i-y^i|\phi(|x^i-y^i|)}\Big)\,. \label{eq:Hi}
	\end{align}
\end{itemize}

Then we have the following Remark easily.
\begin{remark}\label{rem:H-fundamentals}{\ } 
	\begin{enumerate}
		\item{} The assertion of \autoref{thm:uphke} is equivalent to $\HHq{1}{l}$ for any $l\in \{0, \ldots, d-1\}$.
		\item{}	
		%$\HHq{q}{l}$ gets stronger  as $q$ increases, 
		%when $q$ is increasing, 
		For $q<q'$, $\HHq{q'}{l}$ implies $\HHq{q}{l}$.
		\item{} For $q\in [0, 1]$ and $l\le l'$, $\HHq{q}{l'}$ implies $\HHq{q}{l}$.
	\end{enumerate}
\end{remark}

By the similar arguments as in \cite[Section 5]{KKKpre}, we have 
the following Lemma.

\begin{lemma}\label{lem:symmetry}
	Assume condition $\HHq{q}{l}$ holds true for some $q\in[0, 1]$. Then with the same constant 
	$C_l=C_l(l,q,\Lambda,d,\phi)$, 
	for every $t>0$ and $x, y\in \R^d$, and any   permutation $\sigma$ satisfying $|x^{\sigma(1)}-y^{\sigma(1)}| \leq \ldots \leq |x^{\sigma(d)}-y^{\sigma(d)}|$,
	the following holds true:
	\begin{align*}
	p(t, x, y) \le C_l  [\phi^{-1}(t)]^{-d}
	&\prod_{i=1}^{d-l}\Big(1\wedge \frac{t\phi^{-1}(t)}{|x^{\sigma(i)}-y^{\sigma(i)}|\phi(|x^{\sigma(i)}-y^{\sigma(i)}|)}\Big)^{q}\nn\\
	&\times\prod_{i= d-l+1}^d\Big(1\wedge \frac{t\phi^{-1}(t)}{|x^{\sigma(i)}-y^{\sigma(i)}|\phi(|x^{\sigma(i)}-y^{\sigma(i)}|)}\Big)\\
	%\label{eq:Hi-permute} \\
	\le C_l  [\phi^{-1}(t)]^{-d}
	&\prod_{i=1}^{d-l}\Big(1\wedge \frac{t\phi^{-1}(t)}{|x^i-y^i|\phi(|x^i-y^i|)}\Big)^{q}
	\prod_{i= d-l+1}^d\Big(1\wedge \frac{t\phi^{-1}(t)}{|x^i-y^i|\phi(|x^i-y^i|)}\Big). \end{align*}
\end{lemma}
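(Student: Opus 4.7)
My plan is to prove the two inequalities separately: the first via a coordinate-relabeling argument that reduces $\sigma$-permuted data to the sorted case already covered by $\HHq{q}{l}$, and the second via a straightforward rearrangement-type exchange inequality on scalars in $(0,1]$.

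For the first inequality, the key observation is that the class of admissible jump kernels, namely those satisfying $\Lambda^{-1}J^\phi \le J \le \Lambda J^\phi$, is invariant under coordinate permutations, because the reference kernel $J^\phi$ is itself permutation-invariant. Given $x,y$ and a sorting permutation $\sigma$, I would set $\tilde x^i := x^{\sigma(i)}$, $\tilde y^i := y^{\sigma(i)}$, and define a pulled-back kernel $\tilde J(\tilde u,\tilde v) := J(\sigma^{-1}\tilde u,\sigma^{-1}\tilde v)$; then $\tilde J$ also lies in the same admissible class with the same constant $\Lambda$, so the Dirichlet form built from $\tilde J$ via \eqref{eq:df} gives rise to a Hunt process $\tilde X$ with transition density $\tilde p$. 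Since the constant $C_l=C_l(l,q,\Lambda,d,\phi)$ in the hypothesis $\HHq{q}{l}$ does not depend on the individual $J$ within the class, and since $|\tilde x^1-\tilde y^1| \le \cdots \le |\tilde x^d-\tilde y^d|$, one may apply $\HHq{q}{l}$ directly to $\tilde p(t,\tilde x,\tilde y)$. A change of variables then gives $\tilde p(t,\tilde x,\tilde y)=p(t,x,y)$, yielding the first inequality in $\sigma$-permuted coordinates.

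For the second inequality, abbreviate $F_i := 1\wedge\frac{t\phi^{-1}(t)}{|x^i-y^i|\phi(|x^i-y^i|)} \in (0,1]$. By the choice of $\sigma$, the sequence $F_{\sigma(1)}\ge\cdots\ge F_{\sigma(d)}$ is decreasing, so on the left side the $q$-power is applied to the $d-l$ \emph{largest} $F$-values. The inequality reduces to the elementary exchange lemma: for $q\in[0,1]$ and any $a\ge b$ in $(0,1]$, one has $a^q b \le b^q a$, since $a^q b/(b^q a)=(b/a)^{1-q}\le 1$. In words, placing the $q$-power on the larger of two factors produces a smaller product; consequently the $\sigma$-sorted assignment on the left is pointwise minimal among all assignments of the $q$-power to $d-l$ of the $F_i$, and in particular is $\le$ the identity assignment on the right. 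Bubble-sorting $\sigma$ back to the identity by successive swaps of this form makes this monotonically explicit.

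The only conceptual point that deserves care — and the single potential obstacle — is to verify that the constant $C_l$ in $\HHq{q}{l}$ genuinely depends only on the ellipticity bound $\Lambda$ (and on $l,q,d,\phi$) rather than on the specific $J$. This is built into the way $\HHq{q}{l}$ is stated and is in any case automatic, since all the preceding inputs to such bounds (the Nash-type inequality of \autoref{propNash}, the on-diagonal estimate of \autoref{p:on_upper}, and the exit-time estimate of \autoref{cor:exit_new}) depend on $J$ only through its two-sided comparison with $J^\phi$. Once this uniformity is noted, both inequalities follow as described.
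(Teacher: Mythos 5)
Your proof is correct, and since the paper does not spell out the argument (it defers to \cite[Section~5]{KKKpre}), yours is a legitimate stand-alone substitute. Both halves check out. For the first inequality, the key facts you use are all sound: $J^\phi$ is permutation-covariant, so the pulled-back kernel $\tilde J(u,v):=J(P_{\sigma^{-1}}u,P_{\sigma^{-1}}v)$ (where $(P_\sigma u)^i=u^{\sigma(i)}$) lies in the same $\Lambda$-ellipticity class; the Dirichlet form built from $\tilde J$ via \eqref{eq:df} is exactly $\cE(f\circ P_\sigma,g\circ P_\sigma)$ because $P_\sigma e^i=e^{\sigma^{-1}(i)}$ and $P_\sigma$ is measure-preserving; and consequently $\tilde p(t,\tilde x,\tilde y)=p(t,x,y)$ with $\tilde x=P_\sigma x$, $\tilde y=P_\sigma y$ sorted. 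For the second inequality, the elementary exchange $a^qb\le b^qa$ for $1\ge a\ge b>0$, $q\in[0,1]$, applied through a bubble sort, shows the $q$-power assignment to the $d-l$ largest factors yields the pointwise minimal product; the monotonicity of $r\mapsto r\phi(r)$ guarantees $F_{\sigma(1)}\ge\cdots\ge F_{\sigma(d)}$, so the left-hand side is indeed the minimal assignment. The one point worth flagging explicitly — which you do address — is that the first step uses $\HHq{q}{l}$ for the transported kernel $\tilde J$, not the original $J$, so it relies on the constant $C_l$ being uniform over the $\Lambda$-ellipticity class rather than specific to $J$. This is consistent with how the paper writes $C_l=C_l(l,q,\Lambda,d,\phi)$ and with the fact that every input to the inductive chain (\autoref{propNash}, \autoref{p:on_upper}, \autoref{cor:exit_new}, and \autoref{prop:main}) uses $J$ only through \eqref{eq:J-ellipticity-assum}; but a reader should realize that this uniformity is part of the inductive hypothesis, not a consequence of $\HHq{q}{l}$ holding for a single fixed process.
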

\medskip

\autoref{lem:symmetry} tells us that the condition $\HHq{q}{l}$  with $q\in[0, 1]$ implies condition $\HHq{q}{l}'$  with $q\in[0, 1]$, which is defined as follows:
\begin{itemize}
	\item [{\bf $\HHq{q}{l}'$}]
	Given $q > 0$ and $l\in \{1, \ldots, d-1\}$, there exists a positive constant 
	$C_l=C_l(l,q,\Lambda,d,\phi)$ 
	%$C_l=C(l,q,\Lambda)$ 
	such that for all $t>0$ and $x, y\in \R^d$,
	\begin{align*} 
	p(t, x, y)&\le C_l  [\phi^{-1}(t)]^{-d}\nn\\
	&\times
	\prod_{i=1}^{d-l}\Big(1\wedge \frac{t\phi^{-1}(t)}{|x^i-y^i|\phi(|x^i-y^i|)}\Big)^{q}
	\prod_{i= d-l+1}^d\Big(1\wedge \frac{t\phi^{-1}(t)}{|x^i-y^i|\phi(|x^i-y^i|)}\Big). 	\end{align*}
\end{itemize}
Therefore, it suffices to consider  special points $x, y\in \Rd$ satisfying conditions in $\HHq{q}{l}$.
%\end{remark}

\medskip

Next we introduce the following three lemmas.
\begin{lemma}\label{lem:Gl}
	Assume condition $\HHq{q}{l}$ holds true for some $l\in\{0, \ldots, d-2\}$ and  $q\in\big[0, \frac{1}{1+\newa}\big)$. Then there exists $\theta_l>0$ such that $\HHq{q+\theta_l}{l}$  holds true.
\end{lemma}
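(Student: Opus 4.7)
The plan is to bootstrap from $\HHq{q}{l}$ to $\HHq{q+\theta_l}{l}$ via the Chapman--Kolmogorov identity
\begin{align*}
p(t,x,y) \;=\; \int_{\R^d} p(t/2,x,z)\,p(t/2,z,y)\,dz,
\end{align*}
substituting in both factors the symmetrized hypothesis $\HHq{q}{l}'$, which follows from $\HHq{q}{l}$ via \autoref{lem:symmetry}. Since the resulting upper bound for the integrand has product structure in the coordinates of $z$, the multidimensional integral factorizes into a product of one-dimensional convolutions, and the problem reduces to estimating the scalar integrals
\begin{align*}
I^{(\beta)}(r) \;:=\; \int_\R \Bigl(1 \wedge \tfrac{t\phi^{-1}(t)}{|u|\phi(|u|)}\Bigr)^{\beta}\Bigl(1 \wedge \tfrac{t\phi^{-1}(t)}{|u-r|\phi(|u-r|)}\Bigr)^{\beta}\,du,
\end{align*}
for $\beta\in\{q,1\}$ and $r=|x^i-y^i|$, one for each coordinate direction.

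For the $l$ directions carrying exponent $1$, the convolution $I^{(1)}(r)$ is sharp and produces the factor $\phi^{-1}(t)\bigl(1\wedge t\phi^{-1}(t)/(r\phi(r))\bigr)$, matching the corresponding block of the target bound. For the $d-l$ directions with exponent $q$, splitting the integration at $u=r/2$ and invoking $\mathbf{(WS)}$ shows that $I^{(q)}(r)$ can be bounded by $\phi^{-1}(t)\bigl(1\wedge t\phi^{-1}(t)/(r\phi(r))\bigr)^{q+\theta_l}$ for some $\theta_l>0$, provided the exponent $q$ leaves enough room below the critical integrability threshold --- which is exactly what the hypothesis $q<1/(1+\la)$ guarantees. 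This coordinatewise improvement is packaged uniformly in $t$, $x$, $y$ by the technical result \autoref{prop:main}, which I invoke as a black box; combining its conclusion with the factorization above and with $\phi^{-1}(t/2)\asymp\phi^{-1}(t)$ from $\mathbf{(WS)}$ yields the desired $\HHq{q+\theta_l}{l}$.

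The main obstacle is the choice of $\theta_l$, which must be simultaneously small enough that the new exponent $q+\theta_l$ still permits the one-dimensional estimate to go through in later iterations, yet large enough to produce a genuine improvement uniform across the $d-l$ ``singular'' directions. The balancing is carried out by separately tracking contributions from the scales $|u|\le\phi^{-1}(t)$, $\phi^{-1}(t)\le|u|\le r/2$, and $|u|\ge r/2$, and by applying the scaling bounds of $\phi$ on each. The explicit form of $\theta_l$ will depend on $q$, $\la$, and $\ua$, and is determined so that the present lemma can be re-applied finitely many times to push the exponent up toward the integrability threshold, at which point the subsequent lemmas in the paper's strategy take over to reach exponent $1$ in all first $d-l$ coordinates.
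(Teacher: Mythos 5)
There is a genuine gap, and it is fatal to the Chapman--Kolmogorov route as described. Your one-dimensional convolution
\begin{align*}
I^{(q)}(r) \;=\; \int_\R \Bigl(1 \wedge \tfrac{t\phi^{-1}(t)}{|u|\phi(|u|)}\Bigr)^{q}\Bigl(1 \wedge \tfrac{t\phi^{-1}(t)}{|u-r|\phi(|u-r|)}\Bigr)^{q}\,du
\end{align*}
is \emph{divergent} for small $q$: by $\mathbf{(WS)}$, for $|u|$ large the integrand is bounded below by a constant multiple of $|u|^{-2q(\ua+1)}$, so $I^{(q)}(r)=\infty$ whenever $q\le \tfrac{1}{2(\ua+1)}$, and trivially $I^{(0)}(r)=\infty$. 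You have the integrability threshold reversed: it is a \emph{lower} bound on $q$ that guarantees convergence, not an upper bound, so the hypothesis $q<\tfrac{1}{1+\la}$ gives no protection --- in fact the lemma must be applied starting from $\HHq{0}{0}$, where the convolution approach collapses entirely. This is precisely why substituting the upper bound $\HHq{q}{l}'$ into both factors of the Chapman--Kolmogorov identity is not permissible here: the factorized majorant of $p(t/2,\cdot,\cdot)$ is not integrable in $z$ even though $p(t/2,\cdot,\cdot)$ itself is.

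The paper's proof circumvents this via the probabilistic decomposition of \autoref{lem:LBGK}, which replaces the spatial $z$-integral by conditioning on the exit time $\tau$ from a ball $B(x_0, R_{j_0}/8)$; the gain then comes from the exit-probability bound $\Pp^x(\tau\le t/2)\lesssim \fN(\lengthR_{j_0})2^{\lengthR_{j_0}}$ (see \eqref{eq:Plg}), which is small even when the convolution would diverge. Your invocation of \autoref{prop:main} as a black box is therefore not a repackaging of your convolution computation --- it is a genuinely different estimate. Moreover, even taking \autoref{prop:main} for granted, your proposal omits the crucial step: the proposition must be applied for every $j_0\in\{i_0,\ldots,d-l\}$ and a \emph{minimum} over $j_0$ of the resulting bounds ${\mathbf G}_{j_0}(l)$ must be taken, followed by the Case I / Case II dichotomy on the dyadic scales $\lengthR_j$ that actually produces $\theta_l^{i_0}$ and hence $\theta_l=\min_{i_0}\theta_l^{i_0}$. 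Without this minimization there is no mechanism to extract a uniform $\theta_l>0$ from a single application of the proposition.
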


\begin{lemma}\label{lem:Fl}
	Assume condition $\HHq{q}{l}$ holds true for some $l\in\{0, \ldots, d-2\}$ and  $q\in\big(\frac{1}{1+\newa}, 1\big)$. Then $\HHq{0}{l+1}$ holds true.
\end{lemma}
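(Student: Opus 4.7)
The plan is to leverage \autoref{prop:main} (the main technical result proven in the appendix) combined with a case analysis on the size of $R_{d-l}:=|x^{d-l}-y^{d-l}|$ relative to $\phi^{-1}(t)$, where by \autoref{lem:symmetry} I may assume $R_1\le \cdots\le R_d$ with $R_i:=|x^i-y^i|$. First I would dispose of the trivial regime $R_{d-l}\le \phi^{-1}(t)$: here every factor $1\wedge \tfrac{t\phi^{-1}(t)}{R_i\phi(R_i)}$ with $i\le d-l$ equals $1$, so the claim $\HHq{0}{l+1}$ follows at once from $\HHq{q}{l}$ by bounding the remaining exponent-$q$ factors by $1$ and discarding them.

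The substantive case is $R_{d-l}>\phi^{-1}(t)$, where I would feed $\HHq{q}{l}$ into \autoref{prop:main} to produce the additional factor $\tfrac{t\phi^{-1}(t)}{R_{d-l}\phi(R_{d-l})}$ that distinguishes $\HHq{0}{l+1}$ from $\HHq{q}{l}$ along the $(d-l)$-th coordinate. Heuristically this factor comes from the L\'evy system \eqref{eq:LSd}: to move the $(d-l)$-th coordinate by an amount exceeding $\phi^{-1}(t)$, the process $X$ must make at least one jump in the $(d-l)$-th direction of size comparable to $R_{d-l}$, contributing a term of order $t\cdot J(\cdot,\cdot)\sim t/(R_{d-l}\phi(R_{d-l}))$ which, combined with the on-diagonal piece $[\phi^{-1}(t)]^{-d}$ from \autoref{p:on_upper}, yields the asserted factor. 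The smaller factors $\bigl(1\wedge \tfrac{t\phi^{-1}(t)}{R_i\phi(R_i)}\bigr)^q$ with $i<d-l$ are then bounded by $1$ and dropped, matching the exponent $0$ on those coordinates in $\HHq{0}{l+1}$.

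The role of the hypothesis $q>\tfrac{1}{1+\newa}$ is to guarantee that the tail integral $\int_{R_{d-l}}^\infty (u\phi(u))^{-q-1}\,du$ along the $(d-l)$-th axis, controlled by the weak scaling {\bf (WS)}, is of order $\phi(R_{d-l})^{-1}(R_{d-l}\phi(R_{d-l}))^{-q}$, supplying precisely the extra $\phi(R_{d-l})^{-1}$ needed to upgrade the exponent from $q$ to $1$. The threshold $q=\tfrac{1}{1+\newa}$ is exactly where this integral stops producing the gain, which is why Lemmas \ref{lem:Gl} and \ref{lem:Fl} split the range of $q$ at this value.

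The main obstacle I anticipate is the bookkeeping inside \autoref{prop:main}: simultaneously tracking all $d$ coordinate factors when decomposing the integration by whether an intermediate point is closer to $x$ or to $y$ in each coordinate, and verifying that the refined exponents $\theta_l$ of \eqref{eq:Hi} conspire so that exactly the threshold $\tfrac{1}{1+\newa}$ separates the two regimes. The probabilistic idea (a single large jump extracts one factor) is clean, but the combinatorial matching across coordinates is delicate and is precisely what the appendix proof must handle.
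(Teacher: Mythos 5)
Your strategy is essentially the paper's: the regime $|x^{d-l}-y^{d-l}|\lesssim\phi^{-1}(t)$ is exactly the case where condition $\mathcal{R}(i_0)$ holds for some $i_0\ge d-l+1$, and Lemma~\ref{lem:reduction_cases} disposes of it; in the remaining regime one applies Proposition~\ref{prop:main} with $j_0=d-l$, and since $q>\frac{1}{1+\la}$ the proposition returns exponent $1$ on the $(d-l)$-th coordinate and leaves no exponent-$q$ factors for $j>j_0=d-l$, after which Lemma~\ref{lem:LBGK} together with \eqref{eq:m1} converts the bound on $\E^{x}[\1_{\{\tau\le t/2\}}P_{t-\tau}f(X_\tau)]$ into a pointwise bound on $p(t,x,y)$, giving $\HHq{0}{l+1}$.

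One caution on your heuristic, though. The threshold $q=\frac{1}{1+\la}$ does \emph{not} come from the tail integral $\int_{R_{d-l}}^\infty(u\phi(u))^{-q-1}\,du$: by {\bf(WS)} that integral converges and is of the order you state for \emph{every} $q>0$, so it never ``stops producing the gain.'' Inside the proof of Proposition~\ref{prop:main} the dichotomy instead arises from the \emph{near} contribution. After exiting $B(x_0,R_{j_0}/8)$, one sums over the dyadic shells $A_k$ centered at $y_0$; the resulting geometric sum $\sum_m 2^{-m((\la+1)q-1)}$ over the shells with $k<\TT{j_0}{d}$ (see \eqref{eq:s1g1}) converges to a constant precisely when $q>\frac{1}{1+\la}$, and for smaller $q$ it picks up an extra factor $2^{\lengthR_{j_0}(1-(\la+1)q)}$, producing only the weaker gain $\fN(\lengthR_{j_0})^{\la/(\la+1)}$. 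In other words, it is the accumulation of moderate jumps landing \emph{between} $x_0$ and $y_0$ in the $(d-l)$-th direction, not the single large jump, that governs the threshold. That said, your overall plan — reduce via Lemma~\ref{lem:reduction_cases} and feed $\HHq{q}{l}$ into Proposition~\ref{prop:main} with $j_0=d-l$ — is exactly what the paper does.
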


\begin{lemma}\label{lem:GF-dminus1}	{\ } \\
	(i) Assume condition $\HHq{q}{d-1}$ holds true for some $q\in\big[0, \frac{1}{1+\newa}\big)$. Then there exists $\theta_{d-1}>0$ such that 
	$\HHq{q+\theta_{d-1}}{d-1}$ holds true. \\
	(ii) Assume condition $\HHq{q}{d-1}$ holds true for some $q\in\big(\frac{1}{1+\newa}, 1\big)$. Then $\HHq{1}{d-1}$ holds true.
\end{lemma}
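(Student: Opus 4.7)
The plan is to derive both parts of \autoref{lem:GF-dminus1} from \autoref{prop:main} specialised to $l=d-1$, in parallel with the derivations of \autoref{lem:Gl} and \autoref{lem:Fl}. By \autoref{lem:symmetry} I may assume the ordering $|x^1-y^1|\le |x^2-y^2|\le \ldots \le |x^d-y^d|$, so that only the first coordinate carries the weak exponent $q$ while coordinates $2,\ldots,d$ already have the sharp exponent $1$ under $\HHq{q}{d-1}$. The task is therefore to boost the exponent of the first coordinate, which reduces, after transverse integration, to a one-dimensional question in $z^1$.

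First I would apply the Chapman--Kolmogorov identity
\[
p(t,x,y) \;=\; \int_{\Rd} p(t/2,\,x,\,z)\,p(t/2,\,z,\,y)\,dz,
\]
insert the bound provided by $\HHq{q}{d-1}$ into both factors, and split the $z^1$ axis into the three regions $|z^1-x^1|\le r/2$, $|z^1-y^1|\le r/2$, and $\min\{|z^1-x^1|,|z^1-y^1|\}\ge r/2$, where $r:=|x^1-y^1|$. In the first two regions, exactly one of the two factors sees a first-coordinate separation of order $r$ and contributes the weight $1\wedge t\phi^{-1}(t)/(r\phi(r))$ raised to the power $q$, while the first-coordinate piece of the other factor is absorbed into the on-diagonal bound from \autoref{p:on_upper}. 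The transverse integrations in $z^2,\ldots,z^d$ are then carried out by the convolution-type estimates packaged into \autoref{prop:main}: for each $i\ge 2$ the integral in $z^i$ consumes the strong-exponent factors of both $p(t/2,x,z)$ and $p(t/2,z,y)$ and returns the single-coordinate weight $1\wedge t\phi^{-1}(t)/(|x^i-y^i|\phi(|x^i-y^i|))$ with exponent $1$, using the ordering $r\le|x^i-y^i|$ together with \textbf{(WS)}.

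What remains is a single $z^1$-integral whose rate of decay at infinity is controlled by $(1+\la)q$. In case (i), the condition $(1+\la)q<1$ forces truncation at scale $r$, producing an additional weight $\bigl(1\wedge t\phi^{-1}(t)/(r\phi(r))\bigr)^{\theta_{d-1}}$ for some $\theta_{d-1}>0$ determined by $q$, $\la$ and $\ua$; this is the refined bootstrap gain of the type flagged in the Introduction. In case (ii), $(1+\la)q>1$ makes the same integral absolutely convergent, so its total value is bounded by a constant multiple of $1\wedge t\phi^{-1}(t)/(r\phi(r))$, which yields the full exponent $1$ in the first coordinate and establishes $\HHq{1}{d-1}$.

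The main obstacle, exactly as for \autoref{lem:Gl} and \autoref{lem:Fl}, will be the precise bookkeeping of the $d-1$ transverse convolutions: each $z^i$-integration must reproduce the single-coordinate weight with a constant \emph{uniform} in $t$ and in all other coordinates, and the multiple scaling orders produced by $\phi$ must be tracked without parasitic logarithms. This is exactly what \autoref{prop:main} is designed to deliver, and once it is invoked in the form appropriate for $l=d-1$, the dichotomy $(1+\la)q\lessgtr 1$ separates the two parts of the lemma in a single stroke.
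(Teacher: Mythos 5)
Your proposal replaces the paper's exit--time decomposition with a direct Chapman--Kolmogorov iteration, and this substitution does not work: iterating the assumed bound through Chapman--Kolmogorov does not improve the exponent in the first coordinate. Concretely, write $w(s):=1\wedge\frac{t\phi^{-1}(t)}{s\phi(s)}$. Plugging $\HHq{q}{d-1}'$ into both factors of $p(t,x,y)=\int p(t/2,x,z)p(t/2,z,y)\,dz$ factorizes the $z$-integral coordinate-wise, and the first coordinate produces
\[
I_1=\int_{\R}w(|x^1-z^1|)^q\,w(|z^1-y^1|)^q\,dz^1.
\]
In case (ii), $(1+\la)q>1$ makes $w^q$ integrable, but then $I_1\asymp \phi^{-1}(t)\,w(|x^1-y^1|)^q$: on the region $|z^1-x^1|<r/2$ (with $r:=|x^1-y^1|$) the second factor is frozen at $w(r)^q$ and the first integrates to $\asymp\phi^{-1}(t)$, and the far region contributes no more. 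Putting this back gives exactly $\HHq{q}{d-1}$ again, with no gain toward $\HHq{1}{d-1}$. This is the generic obstruction: the convolution of two functions decaying at the same power rate $>1$ decays at that same rate, so self-composition preserves but never beats the hypothesis. In case (i), the situation is worse: when $2(1+\la)q\le 1$ the integral $I_1$ is outright divergent, and no ``truncation at scale $r$'' intrinsic to Chapman--Kolmogorov will rescue it, because there is no second mechanism supplying smallness.

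The source of improvement in the paper is precisely what your proposal omits: \autoref{lem:LBGK} decomposes $\langle P_tf,g\rangle$ through the exit time $\tau$ from $B(x_0,R_1/8)$, and \autoref{prop:main} estimates $\E^x[\1_{\{\tau\le t/2\}}P_{t-\tau}f(X_\tau)]$ by combining the L\'evy system, the small exit-time probability $\Pp^x(\tau\le t/2)\lesssim t/\phi(R_1)$ from \autoref{cor:exit_new}, and the hypothesis $\HHq{q}{l}$ applied only \emph{after} the jump. The factor $t/\phi(R_1)$ is the new smallness that yields the extra $\theta_{d-1}=\newb$; it has no analogue in your Chapman--Kolmogorov decomposition. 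You also mischaracterize \autoref{prop:main}: it is not a package of transverse convolution estimates, but precisely this exit-time/L\'evy-system estimate, and it is invoked here with $i_0=j_0=1$, after reducing to the configuration $\mathcal{R}(1)$ via \autoref{lem:reduction_cases}. Finally, the constant $\theta_{d-1}=\frac{\la}{1+\la}$ does not depend on $q$ or $\ua$ as you suggest; the dichotomy $q\lessgtr\frac{1}{1+\la}$ enters only through the exponent emerging in \eqref{eq:main1}, not through a truncation of a $z^1$-integral.
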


Assuming the above Lemmas,  we finalize our assertion $\HHq{1}{d-1}$ in the following order:
\begin{alignat*}{5}
&\HHq{0}{0} &&\hookrightarrow \HHq{\theta_0}{0} &&\hookrightarrow \HHq{2\theta_0}{0} \ldots &&\hookrightarrow \HHq{N_0\theta_0}{0} &&\\
\hookrightarrow &\HHq{0}{1} &&\hookrightarrow \HHq{\theta_1}{1} &&\hookrightarrow \,\, \ldots  \ldots \ldots \,\,  &&\hookrightarrow \HHq{N_1\theta_1}{1} &&\\
&&&\vdots&&&\vdots&&&\\
\hookrightarrow &\HHq{0}{d-1} &&\hookrightarrow \HHq{\theta_{d-1}}{d-1} &&\hookrightarrow \ldots  \ldots\ldots &&\hookrightarrow \HHq{N_{d-1}\theta_{d-1}}{d-1} && \hookrightarrow \HHq{1}{d-1}
\end{alignat*}
where $ N_l:=\inf\{n\in \N_+:n\theta_l\geq \frac{1}{1+\la}\}$ for $l\in \{0, \ldots, d-1\}$. 
Note that $\HHq{0}{0}$ is proved in \autoref{p:on_upper}.
We will give the detailed proofs of \autoref{lem:Gl}, \autoref{lem:Fl} and \autoref{lem:GF-dminus1} in  \autoref{subsec:proofs-lemmas} with specific definitions of $\theta_l$ and $\theta_{d-1}$,
\begin{align*}
& \theta_l:=\frac{\la}{2+\la+\ua}\left(\sum_{i=1}^{d-l-1}\Big(\frac{\ua+1}{\la}\Big)^i\right)^{-1}\ \mbox{for} \ l\in \{0,1,\ldots, d-2\},\\
&\theta_{d-1}:=\newb.
\end{align*}
In the following, we introduce the concrete condition of two points $x_0, y_0\in \Rd$ according to the time variable $ \kappa:=\phi^{-1}(t)$.
\begin{definition}\label{def:theta_and_R}
	Let $x_0, y_0 \in \R^d$ satisfy $|x_0^{i}-y_0^{i}|\le |x_0^{i+1}-y_0^{i+1}|$ for every $i\in\{1,\ldots, d-1\}$. 
	For any $t>0$, set $\kappa:=\phi^{-1}(t)$. 
	For $i\in\{1,\ldots, d\}$, define $\lengthR_i\in \Z$ and $R_i > 0$ such that 
	\begin{align}
	\label{d:thi_Ri}
	\tfrac{5}{4}2^{\lengthR_i} \kappa\le  |x_0^{i}-y_0^{i}|< \tfrac{10}{4}2^{\lengthR_i}\kappa \qquad\mbox{and}\qquad	R_i=2^{\lengthR_i}\kappa\,.
	\end{align}
	Then $\lengthR_i\le \lengthR_{i+1}$ and $R_i\le R_{i+1}$. We say that condition $\mathcal{R}(i_0)$ holds if 
	\begin{align*}
	%\label{eq:case-R-i0}
	\lengthR_1\le \ldots \le \lengthR_{i_0-1}\le 0<1\le \lengthR_{i_0}\le \ldots\le \lengthR_d\, 
	\tag*{$\mathcal{R}(i_0)$} .
	\end{align*}
	We say that condition $\mathcal{R}(d+1)$ holds if $\lengthR_1\le \ldots \le \lengthR_d \leq 0 < 1$.
\end{definition}

The following Lemma tells us that in specific geometric situations,  \autoref{thm:uphke} follows directly.

\begin{lemma}\label{lem:reduction_cases}
	Let $t>0$ and $x_0, y_0\in\R^d$.  Assume  \eqref{eq:Hi}
	holds for some $l \in \{0,\ldots, d-1\}$ and $q \geq 0$. In addition, if $x_0,y_0$ satisfy condition $\mathcal{R}(i_0)$ for $i_0 \in \{d-l+1, \ldots, d+1\}$.
	Then 
	\begin{align}\label{eq:reduction_cases}
	p(t, x_0, y_0) \le c  [\phi^{-1}(t)]^{-d}\prod_{i=1}^{d} \Big(1\wedge \frac{t\phi^{-1}(t)}{ |x_0^i-y_0^i|\phi(|x_0^i-y_0^i|)}\Big)
	\end{align}
	for some constant $c=c(l,q,d,\Lambda,\phi)> 0$, that is, independent of $t$ and $x_0, y_0$.
\end{lemma}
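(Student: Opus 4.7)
The plan is to exploit the structure of condition $\mathcal{R}(i_0)$: it tells us exactly that the indices on which $\HHq{q}{l}$ supplies only a $q$-th power factor (namely $i\in\{1,\ldots,d-l\}$) are all indices with $\lengthR_i\le 0$, i.e.\ $R_i\le \kappa=\phi^{-1}(t)$. In this regime the weak scaling $\bf (WS)$ forces the corresponding factor $1\wedge\frac{t\phi^{-1}(t)}{|x_0^i-y_0^i|\phi(|x_0^i-y_0^i|)}$ to be bounded away from zero, so raising it to the first or to the $q$-th power makes no essential difference.

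Concretely, since $i_0\ge d-l+1$, for every $i\in\{1,\ldots,d-l\}$ we have $i\le i_0-1$, hence $\lengthR_i\le \lengthR_{i_0-1}\le 0$ and \eqref{d:thi_Ri} gives $|x_0^i-y_0^i|<\tfrac{10}{4}\kappa$. Applying $\bf (WS)$ to the increasing function $\phi$ then yields
$$
|x_0^i-y_0^i|\,\phi(|x_0^i-y_0^i|)\;\le\;\tfrac{10}{4}\kappa\cdot \uC\bigl(\tfrac{10}{4}\bigr)^{\ua}\phi(\kappa)\;=\;c_1\,\phi^{-1}(t)\cdot t,
$$
so that $\tfrac{t\phi^{-1}(t)}{|x_0^i-y_0^i|\phi(|x_0^i-y_0^i|)}\ge c_1^{-1}$. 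Consequently the factor $1\wedge\tfrac{t\phi^{-1}(t)}{|x_0^i-y_0^i|\phi(|x_0^i-y_0^i|)}$ lies in the interval $[c_2,1]$ with $c_2:=\min(1,c_1^{-1})$, uniformly in such $i$. For any $q\ge 0$ this forces
$$
\Bigl(1\wedge \tfrac{t\phi^{-1}(t)}{|x_0^i-y_0^i|\phi(|x_0^i-y_0^i|)}\Bigr)^{q}\;\le\;c_2^{q-1}\Bigl(1\wedge \tfrac{t\phi^{-1}(t)}{|x_0^i-y_0^i|\phi(|x_0^i-y_0^i|)}\Bigr),
$$
i.e.\ the $q$-th power and the first power are comparable up to a constant depending only on $q,\phi,d$.

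For the remaining indices $i\in\{d-l+1,\ldots,d\}$, the hypothesis $\HHq{q}{l}$ already provides the factor at the first power, which is precisely what appears on the right-hand side of \eqref{eq:reduction_cases}. Multiplying the two blocks of factors and absorbing all numerical constants into one constant $c=c(l,q,d,\Lambda,\phi)$ produces the claimed bound. The boundary case $i_0=d+1$ is handled by the same argument applied simultaneously to every index $i\in\{1,\ldots,d\}$.

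There is no real obstacle here; the lemma is a geometric bookkeeping step whose only content is the observation that the weak scaling of $\phi$ converts the quantitative information $\lengthR_i\le 0$ into a uniform lower bound on the corresponding heat-kernel factor, allowing the $q$-exponents in $\HHq{q}{l}$ to be upgraded for free. The slight care needed is merely to invoke $\bf (WS)$ with the upper scaling exponent $\ua$ (not $\la$) so that the estimate on $\phi(|x_0^i-y_0^i|)$ goes in the right direction.
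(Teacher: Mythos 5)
Your proof is correct and follows essentially the same approach as the paper: observe that under $\mathcal{R}(i_0)$ with $i_0\ge d-l+1$, every index $i\le d-l$ has $\lengthR_i\le 0$, hence $|x_0^i-y_0^i|\lesssim\kappa=\phi^{-1}(t)$, which by $\mathbf{(WS)}$ makes the corresponding heat-kernel factor $1\wedge\frac{t\phi^{-1}(t)}{|x_0^i-y_0^i|\phi(|x_0^i-y_0^i|)}\asymp 1$, so that raising it to the $q$-th or first power costs only a constant. The only cosmetic difference is that the paper treats the boundary case $\mathcal{R}(d+1)$ via \autoref{p:on_upper} alone rather than via $\HHq{q}{l}$, whereas you unify all the cases in a single argument; both routes are valid and yield the same bound.
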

\begin{proof}
	For $t>0$, let $\kappa:=\phi^{-1}(t)$.
	(i) The case $\mathcal{R}(d+1)$ (that is, $|x_0^{d}-y_0^{d}|< \tfrac{10}{4}  \kappa^{-1} $) is simple since \autoref{p:on_upper} implies $p(t, x_0, y_0) \le c  \kappa^{-d}$. Estimate \eqref{eq:reduction_cases} follows directly.  
	
	(ii) Assume condition $\mathcal{R}(i_0)$ holds true for some $i_0 \in \{d-l+1, \ldots, d\}$. Then for any $j\le d-l\le i_0-1$ 
	%by \eqref{eq:phi_theta}, 
	since $|x_0^{j}-y_0^{j}|<\tfrac{10}{4}2^{\lengthR_j}\kappa \le \tfrac{10}{4}2^{\lengthR_{i_0-1}}\kappa\le \tfrac{10}{4}\kappa$,
	\begin{align*}
	\Big(\frac{t\phi^{-1}(t)}{|x_0^j-y_0^j|\phi(|x_0^j-y_0^j|)}\wedge 1\Big)\asymp 1 \,.
	\end{align*}
	Hence, $\eqref{eq:Hi}$ implies 
	\begin{align*}
	p(t, x_0, y_0) &\le C_l  [\phi^{-1}(t)]^{-d}\prod_{i=1}^{d-l}	\Big(\frac{t\phi^{-1}(t)}{|x_0^j-y_0^j|\phi(|x_0^j-y_0^j|)}\wedge 1\Big)^{q} \prod_{i= d-l+1}^d	\Big(\frac{t\phi^{-1}(t)}{|x_0^j-y_0^j|\phi(|x_0^j-y_0^j|)}\wedge 1\Big) \\
	&\asymp  [\phi^{-1}(t)]^{-d}\prod_{i=1}^{d} 	\Big(\frac{t\phi^{-1}(t)}{|x_0^j-y_0^j|\phi(|x_0^j-y_0^j|)}\wedge 1\Big)\,.
	\end{align*}		
\end{proof}

By \autoref{lem:reduction_cases}, when $\HHq{q}{l}$ holds, it suffices to consider the cases $\mathcal{R}(i_0)$ for $i_0\in \{1,\ldots, d-l\}$. Here is our main technical result.

\begin{proposition}\label{prop:main}
	Assume that  $\HHq{q}{l}$ holds true for some $l\in \{0,1,\ldots, d-1\}$ and  $q \in [0,1)$.
	For any $t>0$, set $\kappa= \phi^{-1}(t)$ and consider  $x_0, y_0\in \Rd$ satisfying the condition $\mathcal{R}(i_0)$ for some $i_0 \in \{1, \dots, d-l\}$.
	Let $j_0 \in \{i_0,\ldots, d-l\}$ and define an exit time $\tau=\tau_{B(x_0, 2^{\lengthR_{j_0}}\kappa/8)}$.
	Let $f$ be a non-negative Borel function on $\Rd$ supported in $B(y_0, \tfrac{\kappa}{8})$.
	Then there exists $C_{\ref{prop:main}}>0$ independent of  $x_0, y_0$ and $t$ such that for every $x\in B(x_0, \frac{\kappa}{8})$,
	\begin{align}\label{eq:main1}
	&\E^{x}\left[\1_{\{\tau\le t/2\}}P_{t-\tau}f(X_{\tau})\right]\le C_{\ref{prop:main}} \|f\|_1 [\phi^{-1}(t)]^{-d}
	\prod_{j=d-l+1}^d\Big(\frac{t\phi^{-1}(t)}{|x_0^j-y_0^j|\phi(|x_0^j-y_0^j|)}\wedge 1\Big)\\
	&\cdot\prod_{j=j_0+1}^{d-l}\Big(\frac{t\phi^{-1}(t)}{|x_0^j-y_0^j|\phi(|x_0^j-y_0^j|)}\wedge 1\Big)^q\cdot
	\begin{cases}
	\Big(\frac{t\phi^{-1}(t)}{|x_0^{j_0}-y_0^{j_0}|\phi(|x_0^{j_0}-y_0^{j_0}|)}\wedge 1\Big)^{\newb+q}
	&\mbox{ if } q\in\big[0, \tfrac{1}{1+\newa}\big)\\
	\Big(\frac{t\phi^{-1}(t)}{|x_0^{j_0}-y_0^{j_0}|\phi(|x_0^{j_0}-y_0^{j_0}|)}\wedge 1\Big)
	&\mbox{ if } q\in\big(\frac{1}{1+\newa}, 1\big)
	\end{cases}.\nn
	\end{align}
\end{proposition}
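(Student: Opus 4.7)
The plan is to argue via the L\'evy system identity \eqref{eq:LSd}, decomposing the exit from $B(x_0, R_{j_0}/8)$ by coordinate direction. Applying \eqref{eq:LSd} to the stopping time $\tau \wedge (t/2)$,
\begin{align*}
\E^{x}\bigl[\1_{\{\tau\le t/2\}} P_{t-\tau} f(X_{\tau})\bigr] = \E^x \int_0^{\tau \wedge t/2} \sum_{i=1}^d \int_{\R} P_{t-s} f(X_s + e^i h)\, J(X_s, X_s + e^i h)\, \1_{\{X_s + e^i h \notin B(x_0, R_{j_0}/8)\}}\, dh\, ds,
\end{align*}
and by \eqref{eq:J-ellipticity-assum} we bound $J(X_s, X_s + e^i h) \le \Lambda |h|^{-1} \phi(|h|)^{-1}$. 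The task then reduces to bounding $P_{t-s} f$ at each admissible post-jump point $z = X_s + e^i h$ and carrying out the $(s, h, i)$ integration.

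For $z$ as above and $y \in \supp f \subset B(y_0, \kappa/8)$, the hypothesis $\HHq{q}{l}$ combined with \autoref{lem:symmetry} yields a product bound on $p(t-s, z, y)$ in the $d$ coordinate distances. Crucially, $z^j = X_s^j$ for every $j \ne i$ and $X_s \in B(x_0, R_{j_0}/8)$, so $|z^j - y^j| \asymp |x_0^j - y_0^j|$ whenever $|x_0^j - y_0^j|$ is much larger than $R_{j_0}$; by condition $\mathcal{R}(i_0)$ this holds for all $j \ge j_0$, while for $j < j_0$ the corresponding factor in \eqref{eq:Hi} is essentially $1$. This yields $P_{t-s} f(z) \le \|f\|_1$ times a product over $j$ of ``frozen'' factors in $|x_0^j - y_0^j|$ for $j \neq i$, and a single $h$-dependent factor in the $i$-th coordinate direction.

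Substituting back gives a sum of $d$ terms, one per jump direction $i$. The contributions for $i \neq j_0$ are absorbed into the desired product bound using \textbf{(WS)} and the geometry of $\mathcal{R}(i_0)$; the key term is $i = j_0$, for which the inner integral takes the form
\begin{align*}
\int_{\{|h| \gtrsim R_{j_0}\}} |h|^{-1} \phi(|h|)^{-1} \Bigl(1 \wedge \tfrac{t\phi^{-1}(t)}{|h + X_s^{j_0} - y_0^{j_0}|\phi(|h + X_s^{j_0} - y_0^{j_0}|)}\Bigr)^{q}\, dh.
\end{align*}
Splitting the $h$-integral at the near-singularity region $|h + X_s^{j_0} - y_0^{j_0}| \lesssim \kappa$ and its complement, and applying \textbf{(WS)}, one extracts an additional factor of order $\bigl(t\phi^{-1}(t)/(R_{j_0}\phi(R_{j_0}))\bigr)^{\newb}$ from the integration. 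Combined with the existing $\HHq{q}{l}$-factor in the $j_0$-direction, this produces the exponent $q + \newb$ when $q + \newb \le 1$, and is capped at exponent $1$ otherwise. The crossover occurs precisely at $q = 1/(1+\underline\alpha)$, giving the dichotomy in \eqref{eq:main1}.

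The main obstacle is the combinatorial bookkeeping: keeping track of which of the $d-l$ ``small'' and $l$ ``large'' factors in $\HHq{q}{l}$ are assigned to each coordinate after the symmetrization, handling the subcase where the jump direction $i$ lies in the ``large distance'' block $\{d-l+1,\ldots, d\}$, and controlling the outer $s$-integral, which is absorbed using $\phi^{-1}(t-s) \asymp \phi^{-1}(t)$ on $s \le t/2$ together with \autoref{cor:exit_new}. The values $\newb = \underline\alpha/(\underline\alpha+1)$ and the threshold $1/(1+\underline\alpha)$ reflect the precise balance between the $|h|^{-1}\phi(|h|)^{-1}$ singularity of the jump kernel and the integrability supplied by the $\HHq{q}{l}$-factor in the critical $j_0$-direction.
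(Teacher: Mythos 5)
The proposal reflects the same high-level strategy as the paper's Appendix: apply the L\'evy system to the first exit from $B(x_0, R_{j_0}/8)$, decompose by jump direction $i$, invoke $\HHq{q}{l}$ (via \autoref{lem:symmetry}) to control $P_{t-s}f$ at the post-jump point, and integrate using \textbf{(WS)}. The identification of the critical exponent $\newb = \la/(\la+1)$ and the threshold $q = 1/(1+\la)$ is correct, and the computation you sketch for the $i=j_0$ integral over the range $|h|\gtrsim R_{j_0}$ does indeed return a factor $\fN(\lengthR_{j_0})^{\newb+q}$ after multiplying by $\E^x[\tau\wedge t/2]\lesssim t$.

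However, two of the steps as stated would not go through without further work, and both are points where the paper invests substantial effort. First, the exit constraint does \emph{not} force $|h| \gtrsim R_{j_0}$: if $X_s$ sits near the boundary of $B(x_0, R_{j_0}/8)$, arbitrarily small jumps in the $j_0$-direction cause exit. So your inner integral as written drops a regime. The small-$|h|$ exit contribution has to be bounded separately; there the $j_0$-factor of $p(t-s,z,y)$ is frozen at $\fN(\lengthR_{j_0})^q$ and the needed $\fN(\lengthR_{j_0})^{\newb}$ comes from $\Pp^x(\tau\le t/2)\lesssim \fN(\lengthR_{j_0})2^{\lengthR_{j_0}}\lesssim\fN(\lengthR_{j_0})^{\newb}$ via \autoref{cor:exit_new} and \eqref{eq:newmm2}. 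This is exactly what the paper's $\Psi(0)$ estimate and the case $\gamma^i\ge\lengthR_i$ in the estimate of $\mathcal{S}(i)$ do. Second, the claim that contributions for $i\ne j_0$ are ``absorbed using \textbf{(WS)} and the geometry'' is not accurate as stated: every $i\in\{j_0,\ldots,d\}$ requires the \emph{same} balancing argument you describe for $j_0$, because the $j_0$-factor of $p$ stays frozen at $\fN(\lengthR_{j_0})^q$ while the jump is in direction $i$, and the missing $\fN(\lengthR_{j_0})^{\newb}$ must be produced from the $i$-direction integral (giving $\fN(\lengthR_i)^{\newb}\le\fN(\lengthR_{j_0})^{\newb}$) or from $\Pp^x(\tau\le t/2)$. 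For $i>d-l$ in particular, the $i$-factor of the $\HHq{q}{l}$ bound carries exponent $1$, and when $\gamma^i<\lengthR_i$ the ``frozen'' assignment of exponents in $\HHq{q}{l}'$ no longer matches the actual ordering of the coordinate distances; one must re-sort as \autoref{lem:symmetry} allows, and then the factor $\fN(\gamma^i)2^{\gamma^i}$ summed over $\gamma^i\ge\lengthR_{j_0}$ needs the identity $\sum_{\gamma\ge\lengthR_{j_0}}\fN(\gamma)2^{\gamma}\lesssim 2^{\lengthR_{j_0}}\fN(\lengthR_{j_0})\lesssim\fN(\lengthR_{j_0})^{\newb}$ (not merely the cruder power-law bound $2^{-\lengthR_{j_0}\la}$, which is too large when $\la<\ua$). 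The paper's dyadic decomposition into $A_k^i$, the notation $\Psi(0),\mathcal S(i),\mathcal T(i)$, and the explicit $\mathrm{I}/\mathrm{II}$ split are precisely the scaffolding that makes this tractable, so I would not call the non-$j_0$ directions the easy part.
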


\subsection{Proofs of \autoref{lem:Gl}, \autoref{lem:Fl} and \autoref{lem:GF-dminus1}} \label{subsec:proofs-lemmas} 
In this subsection,
we give the proofs of \autoref{lem:Gl}, \autoref{lem:Fl} and \autoref{lem:GF-dminus1} using \autoref{prop:main}. 
We first recall the following result.
\begin{lemma}\cite[Lemma 2.1]{BGK09}\label{lem:LBGK}
	Let $U$ and $V$ be two disjoint non-empty open subsets of $\Rd$ and $f, g$ be non-negative Borel functions on $\Rd$. 
	Let $\tau=\tau_U$ and $\tau^{'}=\tau_V$ be the first exit times from $U$ and $V$, respectively. Then, for all $a, b, t\in \R_+$ such that $a+b=t$, we have
	\begin{align}\label{eq:LBGK}
	\langle P_tf, g\rangle \le \langle \E^{ \cdot}\big[\1_{\{\tau\le a\}}P_{t-\tau}f(X_{\tau})\big], g\rangle+ \langle\E^{ \cdot}\big[\1_{\{\tau^{'}\le b\}}P_{ t-\tau'}g(X_{\tau^{'}})\big], f\rangle\,.
	\end{align}
\end{lemma}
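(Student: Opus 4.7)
The plan is to decompose the pairing $\langle P_t f, g\rangle = \int g(x)\,\E^x[f(X_t)]\,dx$ via the partition $1 = \1_{\{\tau \le a\}} + \1_{\{\tau > a\}}$ and then to handle the complementary event $\{\tau > a\}$ using the time-reversal symmetry of $X$. On $\{\tau \le a\}$ one has $t - \tau \ge t - a = b \ge 0$, so the strong Markov property applied at the stopping time $\tau$ immediately yields
\[
\int g(x)\,\E^x\bigl[\1_{\{\tau \le a\}} f(X_t)\bigr]\,dx \;=\; \bigl\langle \E^{\cdot}[\1_{\{\tau \le a\}} P_{t-\tau} f(X_\tau)],\,g\bigr\rangle,
\]
which is exactly the first term of \eqref{eq:LBGK}. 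It therefore suffices to prove
\[
I \;:=\; \int g(x)\,\E^x\bigl[\1_{\{\tau > a\}} f(X_t)\bigr]\,dx \;\le\; \bigl\langle \E^{\cdot}[\1_{\{\tau' \le b\}} P_{t-\tau'} g(X_{\tau'})],\,f\bigr\rangle.
\]

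For this I would invoke the reversibility of $X$ that is built into the symmetric Dirichlet form $(\mathcal{E}, \mathcal{F})$: the symmetry $p(s, x, y) = p(s, y, x)$ combined with the Markov property shows that the $\sigma$-finite path-space measure $dx \otimes \Pp^x$ is invariant under the time-reversal map $R\colon \omega \mapsto \omega(t - \cdot)$, first on finite-dimensional cylinders and then on the full cylindrical $\sigma$-algebra via a monotone class argument that uses the right-continuity of $X$ recalled in \autoref{rem:reg}. Because $U$ is open and $X$ is c\`adl\`ag, $\{\tau > a\} = \{\omega(s) \in U,\ \forall s \in [0, a]\}$, and under $R$ this event becomes $\{\omega(u) \in U,\ \forall u \in [b, t]\}$. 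The reversal identity therefore gives
\[
I \;\le\; \int f(y)\,\E^y\bigl[\1_{\{X_u \in U,\, \forall u \in [b, t]\}}\, g(X_t)\bigr]\,dy.
\]

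Finally, I would use the disjointness hypothesis $U \cap V = \emptyset$: on the event $\{X_u \in U,\ \forall u \in [b, t]\}$ one has $X_b \in U$, hence $X_b \notin V$, which forces $\tau' \le b$ (when $y \in V$ the process must have exited $V$ by time $b$, and when $y \notin V$ already $\tau' = 0$). Replacing the indicator by the larger $\1_{\{\tau' \le b\}}$ and applying the strong Markov property at the stopping time $\tau'$ converts $\E^y[\1_{\{\tau' \le b\}} g(X_t)]$ into $\E^y[\1_{\{\tau' \le b\}} P_{t-\tau'} g(X_{\tau'})]$; integrating against $f(y)\,dy$ delivers the second term of \eqref{eq:LBGK}. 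The one non-routine step is the path-space time reversal for the uncountable event $\{X_s \in U,\ \forall s \in [0, a]\}$: this must be approximated by cylindrical events indexed by a countable dense subset of $[0, a]$, leveraging the openness of $U$ and the c\`adl\`ag regularity of the Hunt process; every other step is a direct application of the strong Markov property.
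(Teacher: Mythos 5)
Your argument is correct, but it takes a different route than the one in the cited reference \cite[Lemma 2.1]{BGK09}, which is the proof the paper is implicitly deferring to. The reference works entirely at the operator level with killed semigroups, avoiding the path-space time reversal you invoke. Concretely: write $\langle P_t f, g\rangle = \langle P_a g, P_b f\rangle$, introduce the killed semigroups $P^U_a h(x):=\E^x[h(X_a);\,\tau_U>a]$ and $P^V_b h(y):=\E^y[h(X_b);\,\tau_V>b]$, and note three facts. First, the Markov property at time $a$ followed by the strong Markov property at $\tau$ gives $P_t f - P^U_a P_b f = \E^{\cdot}\big[\1_{\{\tau\le a\}}P_{t-\tau}f(X_\tau)\big]$, and symmetrically for $g$ with $V$ and $b$. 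Second, $P^U_a$ and $P^V_b$ are self-adjoint on $L^2(\Rd)$ because the killed process inherits the symmetry of $(\cE,\cF)$. Third, $P^U_a g$ is supported in $U$ and $P^V_b f$ is supported in $V$, so $U\cap V=\emptyset$ forces $\langle P^U_a g, P^V_b f\rangle = 0$. The lemma then reduces to the elementary inequality $\langle A',B\rangle + \langle A,B'\rangle \le \langle A,B\rangle$ for nonnegative functions with $A\ge A'$, $B\ge B'$ and $\langle A',B'\rangle=0$, applied to $A=P_a g$, $A'=P^U_a g$, $B=P_b f$, $B'=P^V_b f$. Your time-reversal argument encodes exactly the same duality (the symmetry $p(s,x,y)=p(s,y,x)$), but by pushing it to path space you pick up genuine technical overhead --- left limits under reversal, measurability of the uncountable sojourn event, the boundary convention in $\tau_U=\inf\{t>0: X_t\notin U\}$ --- which you correctly flag but do not resolve; the operator-level proof sidesteps all of it. One small inexactness in your write-up: the reversal step gives an \emph{equality} for $I$, not an inequality; the inequality enters only afterwards when you enlarge the sojourn event to $\{\tau'\le b\}$. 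This does not affect correctness, since you are proving an upper bound, but it is worth stating the reversal identity as an equality and deferring the inequality to the enlargement step.
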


Now we apply \autoref{lem:LBGK} with non-negative Borel functions $f, g$ on $\R^d$ supported in $B(y_0, \frac{\kappa}{8})$ and $B(x_0, \frac{\kappa}{8})$, respectively, and  subsets $U:=B(x_0, s), V:=B(y_0, s)$ for some $s>0$, $a=b=t/2$ and $\tau=\tau_{U}, \tau^{'}=\tau_{V}$.
The first term of the right-hand side of \eqref{eq:LBGK} is
\begin{align}
\label{eq:m1}
\langle\E^{\cdot}\left[\1_{\{\tau\le t/2\}}P_{t-\tau}f(X_{\tau})\right], g\rangle=\int_{B(x_0, \frac{\kappa}{8})}\E^{x}\left[\1_{\{\tau\le t/2\}}P_{t-\tau}f(X_{\tau})\right]\,g(x)dx ,
\end{align}
and a similar identity holds for the second term. 

\medskip

\begin{proof}[Proof of \autoref{lem:GF-dminus1}]
	By \autoref{lem:reduction_cases}, for any $t>0$, we only consider the case that $x_0, y_0$ satisfy condition $\mathcal{R}(1)$. 
	Recall that $\kappa=\phi^{-1}(t)$ and $R_1=2^{\lengthR_1}\kappa$. Applying \autoref{prop:main} with $i_0=j_0=1$, 
	for $x\in B(x_0, \frac{\kappa}{8})$ and $\tau=\tau_{B(x_0, {R_{1}}/{8})}$, we obtain
	\begin{align*}
	\E^{x}\left[\1_{\{\tau\le t/2\}}P_{t-\tau}f(X_{\tau})\right]
	\le C_{\ref{prop:main}}&[\phi^{-1}(t)]^{-d}\|f\|_1 \mathbf{K}_{1}(t, x_0, y_0),
	\end{align*}
	where
	$${\mathbf K}_{1}(t, x_0, y_0):=	\prod_{j=2}^d\Big(\frac{t\phi^{-1}(t)}{|x_0^{j}-y_0^{j}|\phi(|x_0^{j}-y_0^{j}|)}\Big)
	\begin{cases}
	\Big(\frac{t\phi^{-1}(t)}{|x_0^{1}-y_0^{1}|\phi(|x_0^{1}-y_0^{1}|)}\Big)^{\newb+q}
	&\mbox{ if } q\in \big[0, \frac{1}{1+\newa}\big),\\
	\Big(\frac{t\phi^{-1}(t)}{|x_0^{1}-y_0^{1}|\phi(|x_0^{1}-y_0^{1}|)}\Big)
	&\mbox{ if } q\in \big(\frac{1}{1+\newa}, 1\big),
	\end{cases}$$
	and by \eqref{eq:m1}, 
	\begin{align*}
	&\langle\E^{\cdot}\left[\1_{\{\tau\le t/2\}}P_{t-\tau}f(X_{\tau})\right], g\rangle\le  c[\phi^{-1}(t)]^{-d}\|f\|_1\|g\|_1{\mathbf K}_{1}(t, x_0, y_0).
	\end{align*}
	Similarly we obtain the second term of right-hand side in \eqref{eq:LBGK} and therefore,
	\begin{align*}
	\langle P_tf, g\rangle &\le c[\phi^{-1}(t)]^{-d}\|f\|_1\|g\|_1{\mathbf K}_{1}(t, x_0, y_0).
	\end{align*} 
	
	Since $P_tf(x)=\int_{\Rd} p(t, x, y)f(y)dy$ with the continuous function $p(t,x,y)$(see, \autoref{p:on_upper}), we obtain the following estimate for $t>0$ 
	and $x_0, y_0$ satisfying the condition $\mathcal{R}(1)$,
	\begin{align*}
	&p(t, x_0,y_0) \le c [\phi^{-1}(t)]^{-d}{\mathbf K}_{1}(t, x_0, y_0)\nn\\
	\asymp 
	&\ [\phi^{-1}(t)]^{-d}\prod_{j=2}^d\Big(\frac{t\phi^{-1}(t)}{|x_0^{j}-y_0^{j}|\phi(|x_0^{j}-y_0^{j}|)}\wedge 1\Big)
	\begin{cases}
	\Big(\frac{t\phi^{-1}(t)}{|x_0^{1}-y_0^{1}|\phi(|x_0^{1}-y_0^{1}|)}\wedge 1\Big)^{\newb+q}
	&\mbox{if } q\in \big[0, \frac{1}{1+\newa}\big),\\
	\Big(\frac{t\phi^{-1}(t)}{|x_0^{1}-y_0^{1}|\phi(|x_0^{1}-y_0^{1}|)}\wedge 1\Big)
	&\mbox{if } q\in \big(\frac{1}{1+\newa}, 1\big).
	\end{cases}
	\end{align*}
	This proves  \autoref{lem:GF-dminus1} with $\theta_{d-1}=\frac{\la}{\la+1}$.
\end{proof}

\medskip
\begin{proof}[Proof of \autoref{lem:Fl}]
	Given that $\HHq{q}{l}$ holds for some $l\in \{0, 1, \ldots, d-2\}$   and  $q\in\big(\frac{1}{1+\newa}, 1\big)$. For any $t>0$, consider $x_0, y_0$ satisfying the condition $\mathcal{R}(i_0)$ for some $i_0\in\{1,\ldots, d-l\}$. 
	Recall that $\kappa=\phi^{-1}(t)$ and $R_i=2^{\lengthR_i}\kappa$. Applying \autoref{prop:main} with $j_0=d-l$, for $x\in B(x_0,\frac{\kappa}{8})$ and $\tau=\tau_{B(x_0,{R_{d-l}}/{8})}$, we obtain 
	\begin{align*}
	\E^{x} &\left[\1_{\{\tau\le t/2\}}P_{t-\tau}f(X_{\tau})\right]\le \, C_{\ref{prop:main}}\,[\phi^{-1}(t)]^{-d}\|f\|_1 \prod_{j=d-l}^d\Big(\frac{t\phi^{-1}(t)}{|x_0^{j}-y_0^{j}|\phi(|x_0^{j}-y_0^{j}|)}\Big).
	\end{align*}
	By the similar argument used in the proof of \autoref{lem:GF-dminus1}, we obtain for $t>0$ and for a.e. $(x, y)\in B(x_0, \frac{\kappa}{8})\times B(y_0, \frac{\kappa}{8})$, 
	\begin{align*}
	p(t, x, y)\le c [\phi^{-1}(t)]^{-d}
	\prod_{j=d-l}^d\Big(\frac{t\phi^{-1}(t)}{|x_0^{j}-y_0^{j}|\phi(|x_0^{j}-y_0^{j}|)}\Big)\,,
	\end{align*}
	therefore, for $t>0$ and $x_0, y_0$ satisfying the condition $\mathcal{R}(i_0)$,
	\begin{align*}
	%\label{eq:con2}
	p(t ,x_0, y_0)&\le c [\phi^{-1}(t)]^{-d}\prod_{j=d-l}^{d}\Big(\frac{t\phi^{-1}(t)}{|x_0^{j}-y_0^{j}|\phi(|x_0^{j}-y_0^{j}|)}\wedge 1\Big).
	\end{align*}
	This implies $\HHq{0}{l+1}$ for $l\in \{0, 1, \ldots, d-2\}$  by \autoref{lem:reduction_cases} and hence we have proved \autoref{lem:Fl}.
\end{proof}

\medskip

The proof of \autoref{lem:Gl} is more complicated.
We introduce
\begin{align}\label{eq:newmm}
\mathfrak N(\delta):=\frac{\phi(\kappa)}{2^{\delta}\, \phi(2^{\delta}\kappa)}\qquad\text{for $\delta\in \Z$}.
\end{align}
Then {\bf(WS)} implies that there exists $c=c(\phi)\ge 1$ such that for any $\delta\in \Z_+$,
\begin{align}\label{eq:newmm2}
c^{-1}2^{-\delta(\ua+1)}\le \fN(\delta)\le c2^{-\delta(\la+1)}.
\end{align}

\medskip

\begin{proof}[Proof of \autoref{lem:Gl}]
	Given that $\HHq{q}{l}$ holds for some $l\in \{0, 1, \ldots, d-2\}$  and $q\in\big[0, \frac{1}{1+\newa}\big)$.
	For any $t>0$, consider $x_0, y_0$ satisfying the condition $\mathcal{R}(i_0)$ for some $i_0\in\{1,\ldots, d-l\}$. 
	Recall that $\kappa=\phi^{-1}(t)$ and $R_j=2^{\lengthR_j}\kappa\asymp |x_0^j-y_0^j|$ for $j\in\{i_0, \ldots, d\}$.
	Applying \autoref{prop:main} with  $j_0\in \{i_0, \ldots, d-l\}$, we obtain that for $x\in B(x_0, \frac{\kappa}{8})$ and $\tau=\tau_{B(x_0, R_{j_0}/8)}$,
	\begin{align*}
	\E^{x}\left[\1_{\{\tau\le t/2\}}P_{t-\tau}f(X_{\tau})\right]
	\le C_{\ref{prop:main}}[\phi^{-1}(t)]^{-d} \|f\|_1
	{\mathbf G}_{j_0}(l),
	\end{align*}
	where
	\begin{align}\label{eq:gm}
	{\mathbf G}_{j_0}(l):=\fN(\lengthR_{j_0})^{\newb+q}\prod_{j={j_0+1}}^{d-l}
	\fN(\lengthR_{j})^q\prod_{j=d-l+1}^d\fN(\lengthR_{j}).
	\end{align}
	By a similar argument  
	%By an argument, which is similar to one 
	used in \autoref{lem:GF-dminus1}, we obtain for $t>0$ and for a.e. $(x, y)\in B(x_0, \frac{\kappa}{8})\times B(y_0, \frac{\kappa}{8})$,
	\begin{align*}
	p(t, x, y)\le c [\phi^{-1}(t)]^{-d}
	{\mathbf G}_{j_0}(l)\qquad\mbox{ for } j_0\in \{i_0, \ldots, d-l\},
	\end{align*}
	hence, for $t>0$ and $x_0, y_0$ satisfying the condition $\mathcal{R}(i_0)$,
	\begin{align}\label{eq:upp11}
	p(t ,x_0, y_0)&\le c [\phi^{-1}(t)]^{-d}\big({\mathbf G}_{i_0}(l)\wedge {\mathbf G}_{i_0+1}(l)\wedge \ldots\wedge {\mathbf G}_{d-l}(l)\big).
	\end{align}
	
	Given $l\in \{0, 1, \ldots, d-2\}$, in order to obtain $\theta_l$ in \autoref{lem:Gl}, we first define $\theta_l^{i_0}$ inductively for $i_0\in\{1, \ldots, d-l\}$. 
	Let $t>0$, and suppose that $x_0, y_0$ satisfy $\mathcal{R}(d-l)$. Since \eqref{eq:upp11}  implies $p(t ,x_0, y_0)\le c [\phi^{-1}(t)]^{-d} {\mathbf G}_{d-l}(l)$, we have that	
	\begin{align}\label{eq:d-l}
	&	p(t ,x_0, y_0)\le c [\phi^{-1}(t)]^{-d} \fN(\lengthR_{d-l})^{\newb+q}\prod_{j=d-l+1}^d\fN(\lengthR_{j})\nn\\
	\asymp &\  [\phi^{-1}(t)]^{-d}\prod_{j={1}}^{d-l}
	\big(\tfrac{t\phi^{-1}(t)}{|x_0^{j}-y_0^{j}|\phi(|x_0^{j}-y_0^{j}|)}\wedge1\Big)^{\theta_l^{d-l}+q}\prod_{j=d-l+1}^d\big(\tfrac{t\phi^{-1}(t)}{|x_0^{j}-y_0^{j}|\phi(|x_0^{j}-y_0^{j}|)}\wedge 1\big),
	\end{align}
	with $\theta_l^{d-l}:=\newb$. 
	Now we consider the case $x_0, y_0$ satisfying $\mathcal{R}(i_0)$ for some  $i_0\in\{1, \ldots, d-l-1\}$.
	Let
	\begin{align*}
	\theta_l^{i_0}:=\frac{\la}{2+\la+\ua}\left(\sum_{i=1}^{d-l-i_0}\Big(\frac{\ua+1}{\la}\Big)^i\right)^{-1} %\le\frac{\la}{2(\ua+1)}
	.
	\end{align*}
 For $a, b\in \N_0:=\N\cup\{0\}$, define a notation $\TT{\cdot}{\cdot}$ as follows:
	$$\TT{a}{b}:=\Tj{a}{b} \quad\text{ if }a\le b;\quad\text{ otherwise, } \quad\TT{a}{b}:=0.$$ 
	For the convenience of the notation, let $b_0:=\newb$. We discuss in two cases:
	
	Case I. Suppose that
	\begin{align}\label{lam1}
	\theta_l^{i_0}\le \frac{\lengthR_{j_0}(\la+1)b_0-(\ua+1)\TT{i_0}{j_0-1}q}{(\ua+1)\TT{i_0}{d-l}}\qquad \text{
		for some  $j_0\in\{{i_0},\ldots, d-l-1\}$.}
	\end{align} 
	Since $b_0\ge \theta_l^{i_0}$, {\eqref{eq:newmm2} and \eqref{lam1} imply} that 
	\begin{align*}
	\left(\fN(\lengthR_{j_0})^{b_0+q} \prod_{j=j_0+1}^{d-l}\fN(\lengthR_{j})^{q}\right)^{-1}
	&\prod_{j={i_0}}^{d-l}\fN(\lengthR_{j})^{q+\theta_l^{i_0}}=\prod_{j={i_0}}^{j_0-1}\fN(\lengthR_{j})^{q+\theta_l^{i_0}}\fN(\lengthR_{j_0})^{-(b_0-\theta_l^{i_0})}
	\prod_{j=j_0+1}^{d-l}\fN(\lengthR_{j})^{\theta_l^{i_0}}\\
	&\ge c\,
	2^{-(\ua+1)\TT{i_0}{j_0-1}({q+\theta_l^{i_0}})}
	2^{(\la+1)\lengthR_{j_0}(b_0-\theta_l^{i_0})} 2^{-(\ua+1) \TT{j_0+1}{d-l}\theta_l^{i_0}} \nonumber\\ 
	&\ge c \ 2^{-(\ua+1) \TT{i_0}{j_0-1}q} 2^{-(\ua+1)\TT{i_0}{d-l}\theta_l^{i_0}}
	2^{\lengthR_{j_0}(\la+1)b_0} \ge c,
	\end{align*}
	and therefore,
	\begin{align}
	\label{eq:psa1n}
	\fN(\lengthR_{j_0})^{b_0+q} \prod_{j=j_0+1}^{d-l}\fN(\lengthR_{j})^{q}
	\le c\
	\prod_{j={i_0}}^{d-l}\fN(\lengthR_{j})^{q+\theta_l^{i_0}}.
	\end{align}
	Case II.	Suppose that
	\begin{align*}
	\theta_l^{i_0}>\max_{j_0\in\{{i_0}, \ldots, d-l-1\}}
	\frac{\lengthR_{j_0}(\la+1)b_0-(\ua+1)\TT{i_0}{j_0-1}q}{(\ua+1)\TT{i_0}{d-l}}.
	\end{align*}
	Let $K_0:=\newA$.
	For $i\in\{0, \ldots, d-l-1-i_0\}$, the lower bounds of $\big(1+\tfrac{q}{K_0 b_0}\big)^i\theta_l^{i_0}$ could be obtained as follows;  
	\begin{alignat*}{3}
	j_0=i_0:&\quad 
	&&&\theta_l^{i_0} &> \tfrac{K_0 b_0\cdot\lengthR_{i_0}}{\TT{i_0}{d-l}}\,,\\
	j_0=i_0+1:&\quad	
	&\theta_l^{i_0}& > \tfrac{K_0 b_0\cdot\lengthR_{i_0+1}}{\TT{i_0}{d-l}}-\tfrac{q\cdot \lengthR_{i_0}}{\TT{i_0}{d-l}}
	\,\qquad\Longrightarrow &\big(1+\tfrac{q}{K_0b_0}\big)\theta_l^{i_0} &> \tfrac{K_0 b_0\cdot\lengthR_{i_0+1}}{\TT{i_0}{d-l}}\,,\\
	j_0=i_0+2:&\quad
	&	\theta_l^{i_0} &> \tfrac{K_0 b_0\cdot\lengthR_{i_0+2}}{\TT{i_0}{d-l}}-\tfrac{q\cdot(\lengthR_{i_0}+\lengthR_{i_0+1})}{\TT{i_0}{d-l}}
	\quad\Longrightarrow &\big(1+\tfrac{q}{K_0 b_0}\big)^2\theta_l^{i_0}&> \tfrac{K_0 b_0\cdot\lengthR_{i_0+2}}{\TT{i_0}{d-l}}\,,\\
	\vdots \qquad&&&\qquad\vdots &\vdots \qquad\\
	j_0=d-l-1:&\quad
	&\theta_l^{i_0}& > \tfrac{K_0 b_0\cdot\lengthR_{d-l-1}}{\TT{i_0}{d-l}}-\tfrac{q\cdot\TT{i_0}{d-l-2}}{\TT{i_0}{d-l}}
	\,\Longrightarrow &\big(1+\tfrac{q}{K_0 b_0}\big)^{d-l-1-i_0}\theta_l^{i_0} &> \tfrac{K_0 b_0\cdot\lengthR_{d-l-1}}{\TT{i_0}{d-l}}\, .
	\end{alignat*}
	Hence,
	\begin{align}\label{eq:lamsum}
	\theta_l^{i_0}\sum_{i=0}^{d-l-i_0-1}\big(1+\frac{q}{K_0 b_0}\big)^i> \frac{K_0 b_0\cdot\TT{i_0}{d-l-1}}{\TT{i_0}{d-l}}. 
	\end{align}	
	Now our claim is that
	\begin{align}\label{eq:psad1n} 
	\fN(\lengthR_{d-l})^{b_0+q}\le c\  \prod_{j={i_0}}^{d-l}\fN(\lengthR_{j})^{q+\theta_l^{i_0}}.
	\end{align}	
	For this, we only need to prove that 
	\begin{align}\label{eq:psad2n} -(\ua+1)\TT{{i_0}}{d-l-1}(q+\theta_l^{i_0})+(\la+1)\lengthR_{d-l}(b_0-\theta_l^{i_0})\ge 0.
	\end{align}
	If so,	using \eqref{eq:newmm2} with $b_0\ge \theta_l^{i_0}$, we have that
	\begin{align}\label{eq:psad2}
	\fN(\lengthR_{d-l})^{-b_0-q}\prod_{j={i_0}}^{d-l}\fN(\lengthR_{j})^{q+\theta_l^{i_0}}
	&\ge c\ 2^{-(\ua+1)\TT{{i_0}}{d-l-1}(q+\theta_l^{i_0})} 2^{(\la+1)\lengthR_{d-l}(b_0-\theta_l^{i_0})}\ge c.
	\end{align}	
	By \eqref{eq:lamsum} and the fact that $K_0\le  1$, we observe that
	\begin{align}\label{eq:psad3}
	& \frac{\TT{{i_0}}{d-l-1}}{\TT{{i_0}}{d-l}}(\ua+1)(q+\theta_l^{i_0})-\left(1-\frac{\TT{{i_0}}{d-l-1}}{\TT{{i_0}}{d-l}}\right) (\la+1)(b_0-\theta_l^{i_0})
	\nn\\
	\le&\ \frac{\TT{{i_0}}{d-l-1}}{\TT{{i_0}}{d-l}}(\ua+1)(b_0+q)-(\la+1)(b_0-\theta_l^{i_0})
	\nn\\
	< &\ {\theta_l^{i_0}}\sum_{i=0}^{d-l-i_0-1}\big(1+\frac{q}{K_0 b_0}\big)^i(\ua+1)\left(\frac{1}{K_0}+\frac{q}{K_0b_0}\right)- (\la+1)(b_0-\theta_l^{i_0})\nn\\
	\le& \  {\theta_l^{i_0}}\sum_{i=1}^{d-l-i_0}\Big(\frac{1}{K_0}+\frac{q}{K_0 b_0}\big)^i(\ua+1)- (\la+1)(b_0-\theta_l^{i_0}).
	\end{align}
	The definitions of $b_0$ and $\theta_l^{i_0}$ yield that  $b_0-\theta_l^{i_0}\ge \frac{\ua+1}{2+\la+\ua}b_0$ and hence,
	\begin{align}\label{eq:psad4}
	{\theta_l^{i_0}}\sum_{i=1}^{d-l-i_0}\Big(\frac{\ua+1}{\la}\Big)^i(\ua+1)- (\la+1)(b_0-\theta_l^{i_0})\le 0.
	\end{align}
	Since $\frac{1}{K_0}+\frac{q}{K_0 b_0}\le \frac{\ua+1}{\la}\  \text{ for  } q\le (1+\la)^{-1},$
	\eqref{eq:psad3}--\eqref{eq:psad4} yield \eqref{eq:psad2n}.
	
	By \eqref{eq:psa1n} and \eqref{eq:psad1n}, we obtain the upper bounds of ${\mathbf G}_{j_0}(l)$ for $j_0\in\{i_0, \ldots, d-l\}$ in \eqref{eq:gm}, and therefore, \eqref{eq:upp11} implies that
	for any $t>0$ and $x_0, y_0$ satisfying $\mathcal{R}(i_0)$, $i_0\in\{1, \ldots, d-l-1\}$, 
	\begin{align}\label{eq:others}
	&p(t, x_0,y_0)\le\,c [\phi^{-1}(t)]^{-d}\prod_{j=i_0}^{d-l}\fN(\lengthR_j)^{q+\theta_l^{i_0}}\prod_{i=d-l+1}^{d}\fN(\lengthR_j)\nn\\
	&\asymp [\phi^{-1}(t)]^{-d}\prod_{i=1}^{d-l}\big(\tfrac{t\phi^{-1}(t)}{|x_0^{j}-y_0^{j}|\phi(|x_0^{j}-y_0^{j}|)}\wedge 1\Big)^{q+\theta_l^{i_0}}\prod_{i=d-l+1}^{d}\big(\tfrac{t\phi^{-1}(t)}{|x_0^{j}-y_0^{j}|\phi(|x_0^{j}-y_0^{j}|)}\wedge 1\big),
	\end{align}
	where 
	$\theta_l^{i_0}:=\frac{\la}{2+\la+\ua}\left(\sum_{i=1}^{d-l-i_0}\Big(\frac{\ua+1}{\la}\Big)^i\right)^{-1} .$
	
	By \eqref{eq:d-l} and \eqref{eq:others} in connection with \autoref{lem:reduction_cases}, 
	we have $\HHq{q+\theta_l}{l}$ with $\theta_l:=\min\limits_{i_0\in \{1,2,\ldots,d-l\}}\theta_l^{i_0}=\frac{\la}{2+\la+\ua}\left(\sum_{i=1}^{d-l-1}\Big(\frac{\ua+1}{\la}\Big)^i\right)^{-1} $ for $l\in \{0, 1, \ldots, d-2\}$. Hence, the proof of \autoref{lem:Gl} is complete.
\end{proof}

\section{H\"older continuity and  lower bound estimates}
\label{sec:low}
In this section, we  prove the  H\"older continuity of  heat kernel $p(t,x,y)$.
Consequently, the process $X$ can be refined to start from every point in $\R^d$, and  estimates for the transition density functions hold for every $x,y\in\R^d$.  As indicated   in \cite[Remark 3.6]{BKK10}, the H\"older continuity of  transition densities can be derived from the boundedness of transition densities plus the H\"older continuity of bounded harmonic functions. According to which we also conclude that $X$ is a strong Feller process. At the end, we give the sharp lower bound for $p(t,x,y)$.
\medskip

\subsection{H\"older continuity}

\begin{proposition}\label{p:exit}
	For any $A>0$ and $B\in (0, 1)$, there exists ${ \varrho}={\varrho}(A, B, \phi)\in (0, 2^{-1})$ such that for every $r>0$ and $x\in \Rd$,
	\begin{align*}
	\Pp^x(\tau_{B(x, Ar)}<{\varrho} \phi(r))=\Pp^x\Big(\sup_{s\le {\varrho}\phi(r) }|X_s-X_0|>Ar \Big)\le B.
	\end{align*}
\end{proposition}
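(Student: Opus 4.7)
The plan is to derive this directly from Corollary \ref{cor:exit_new} by a change of variables, choosing $\varrho$ small in a way that depends on $A$ and $B$ through $\phi$.

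First I would fix $T=1$ in Corollary \ref{cor:exit_new} and write the conclusion in the form
\begin{equation*}
\Pp^x\bigl(\tau_{B(x,R\phi^{-1}(s))}<s\bigr)\le \frac{c\,s}{\phi(\phi^{-1}(s)R)}\qquad \text{for all } R\ge 1,\; s>0,
\end{equation*}
with $c=c(\phi)$. To convert this into a statement about $\tau_{B(x,Ar)}$ in time $\varrho\phi(r)$, I would set $s:=\varrho\phi(r)$ and define $\rho:=\phi^{-1}(\varrho\phi(r))$, so that $\phi(\rho)=\varrho\phi(r)$. Then choose $R:=Ar/\rho$, so that $R\phi^{-1}(s)=Ar$, and the bound becomes
\begin{equation*}
\Pp^x\bigl(\tau_{B(x,Ar)}<\varrho\phi(r)\bigr)\le \frac{c\,\varrho\phi(r)}{\phi(Ar)},
\end{equation*}
provided $R\ge 1$ holds.

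The next step is to handle the two sub-obstacles, which are the only real content here. First, the constraint $R\ge 1$: applying \textbf{(WS)} in the form $\phi(\rho)/\phi(r)\le \uC(\rho/r)^{\ua}$ (valid once $\rho\le r$, which is true if $\varrho<1$) gives $\rho/r\ge (\varrho/\uC)^{1/\ua}$ reversed, more precisely $(\rho/r)^{\ua}\ge \varrho/\uC$ forces $\rho/r\le(\uC\varrho)^{1/\ua}$; hence $R=Ar/\rho\ge A(\uC\varrho)^{-1/\ua}$, so $R\ge 1$ as soon as $\varrho\le A^{\ua}/\uC$. Second, the factor $\phi(r)/\phi(Ar)$ is bounded by a constant $C(A,\phi)$ depending only on $A$ and $\phi$: if $A\ge 1$ use the lower bound of \textbf{(WS)} to get $\phi(r)/\phi(Ar)\le \lC^{-1}A^{-\la}$, while if $A<1$ use the upper bound (swapping arguments) to get $\phi(r)/\phi(Ar)\le \uC A^{-\ua}$.

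Combining, we obtain
\begin{equation*}
\Pp^x\bigl(\tau_{B(x,Ar)}<\varrho\phi(r)\bigr)\le c\,C(A,\phi)\,\varrho,
\end{equation*}
and we finish by choosing $\varrho:=\min\bigl(2^{-1},\,A^{\ua}/\uC,\, B/(c\,C(A,\phi))\bigr)\in(0,2^{-1})$, which depends only on $A$, $B$, and $\phi$. Note that this choice is independent of $r$ and $x$, as required. The only genuine subtlety is verifying the $R\ge 1$ constraint imposed by Corollary \ref{cor:exit_new}, and that is what forces $\varrho$ to depend on $A$; otherwise the argument is a direct rescaling of the exit-time bound already in hand. (For $x$ inside the exceptional set $\cN$, the inequality is trivial/vacuous as per the convention adopted in the Remark following Corollary \ref{cor:exit_new}.)
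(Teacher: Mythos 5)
Your proof is correct in substance and takes a genuinely different route from the paper. The paper proves \autoref{p:exit} from scratch: it uses the off-diagonal upper bound of \autoref{thm:uphke} to estimate $\Pp^x(|X_t-x|\ge u)$ by $c(t/\phi(u))^d$ via a tail integral, then runs a strong-Markov doubling argument (as in \autoref{p:exit_new}) to pass from one fixed time to the supremum. You instead rescale \autoref{cor:exit_new} directly, choosing the variables in that corollary so that the ball radius equals $Ar$ and the time equals $\varrho\phi(r)$, and reduce the problem to verifying the constraint $R\ge 1$ plus bounding $\phi(r)/\phi(Ar)$ by a constant. This is noticeably shorter, already handles the supremum (since \autoref{cor:exit_new} is a genuine exit-time bound), and — interestingly — does not rely on the hard Section 3 upper bounds at all, only on preliminaries from Section 2. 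The trade-off is that the paper's route yields the cleaner intermediate estimate with the $d$-th power, which is then reused nowhere else, so for this statement your path is arguably the more economical one.

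One small correction to your algebra in the step verifying $R\ge 1$: the inequality you write, $\phi(\rho)/\phi(r)\le \uC(\rho/r)^{\ua}$ and then ``$(\rho/r)^{\ua}\ge \varrho/\uC$ forces $\rho/r\le(\uC\varrho)^{1/\ua}$'', has the inequalities pointing the wrong way. The \textbf{(WS)} upper bound applied to the pair $\rho\le r$ reads $\phi(r)/\phi(\rho)\le \uC(r/\rho)^{\ua}$; taking reciprocals gives $\varrho=\phi(\rho)/\phi(r)\ge \uC^{-1}(\rho/r)^{\ua}$, i.e.\ $(\rho/r)^{\ua}\le \uC\varrho$, and hence $\rho/r\le(\uC\varrho)^{1/\ua}$. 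Your conclusion is the right one, so the argument stands, but the displayed chain of implications as written is internally inconsistent (a bound of the form $(\rho/r)^{\ua}\ge\cdots$ would give a lower bound on $\rho/r$, not an upper bound). Everything else — the final choice $\varrho=\min(2^{-1},A^{\ua}/\uC,B/(cC(A,\phi)))$, the two-case bound on $\phi(r)/\phi(Ar)$, and the remark about the exceptional set $\cN$ — is fine.
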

\begin{proof}
	Let $\zeta$ be the lifetime of $X$.
	By  \autoref{thm:cons}, $\Pp^x(\zeta\le u)=0$ for any $u>0$.
	So for any $x\in \Rd$ and $t,u>0$,  by the upper bound estimates in \autoref{thm:uphke} of $p(t, x, y)$, and {\bf(WS)} condition, we have that
	\begin{align*}
	&\Pp^x(|X_t-x|\ge u)=\int_{B(x, u)^c} p(t, x, y) dy +\Pp^x(\zeta\le u)\nn\\
	&\le  c_1 \prod_{i=1}^d\int_{\left\{y^i:|x^i-y^i|\ge \frac{u}{\sqrt{d}}\right\}} \frac{t}{|x^i-y^i|\phi(|x^i-y^i|)}dy^i\le  c_1 t^d\prod_{i=1}^d\int_{\frac{u}{\sqrt{d}}}^\infty (w\phi(w))^{-1}dw\nn\\
	&\le c_2\left(\frac{t}{\phi(u)}\right)^{d}\prod_{i=1}^du^{\la}\int_{\frac{u}{\sqrt{d}}}^\infty w^{-1-\la}dw \le  c_3\left(\frac{t}{\phi(u)}\right)^{d}.\nn
	\end{align*}
	By the similar proof as in \autoref{p:exit_new}, for $\sigma_u:=\inf\{s\ge 0:|X_s-X_0|>u\}$, we have
	%By the strong Markov property, 
	\begin{align*}
	\Pp^x(\sigma_u<t, |X_{2t}-x|\le u/2)
	\le \sup_{\substack{l<t\ \&\\ y\in\{z\in\Rd:|x-z|>u\} }} \Pp^y(|X_{2t-l}-y|>u/2),
	\end{align*}
	and for any $x\in \Rd$ and $u, t>0$,
	\begin{align}\label{eq:ex2}
	&\Pp^x\Big(\sup_{s\le t}|X_s-X_0|>u\Big)
	\le \Pp^x(\sigma_u<t, |X_{2t}-x|\le u/2)+\Pp^x(|X_{2t}-x|\ge u/2)\nn\\   
	&\le \sup_{\substack{y\in\{z\in\Rd:|x-z|>u\}\\ l<t }} \Pp^y(|X_{2t-l}-y|>u/2)+c_4\left(\frac{t}{\phi(u)}\right)^{d}\le c_5\left(\frac{t}{\phi(u)}\right)^{d}.
	\end{align}
	For any $A, B\in (0, 1)$, \eqref{eq:ex2} and {\bf(WS)} imply that 
	\begin{align}\label{eq:ex3}
	\Pp^x\Big(\sup_{s\le { \varrho}_1\phi(r)}|X_s-X_0|>Ar\Big)
	\le c_5\left(\frac{{ \varrho}_1\phi(r)}{\phi(Ar)}\right)^{d}\le c_5\left({\varrho}_1\uC A^{-\ua}\right)^d \le B,
	\end{align}
	for some constant ${ \varrho}_1:=\uC^{-1} A^{\ua}(c_5^{-1}B)^{1/d} \wedge 2^{-1}$, and it proves our assertion in this case.
	Now we consider the case $A\ge 1$ and $B\in(0, 1)$.
	In the similar way to obtain \eqref{eq:ex3}, using \eqref{eq:ex2},
	\begin{align*}
	\Pp^x\Big(\sup_{s\le {\varrho}_2\phi(r) }|X_s-X_0|>Ar\Big)\leq\Pp^x\Big(\sup_{s\le {\varrho}_2\phi(Ar) }|X_s-X_0|>Ar\Big)\le c_5\left(\frac{{ \varrho}_{2}\phi(Ar)}{\phi(Ar)}\right)^d\le B,
	\end{align*}
	for some ${\varrho}_2:=(c_5^{-1}B)^{1/d}\wedge 2^{-1}$, and therefore we have our assertion.
\end{proof}

\begin{proposition}
	\label{prop:exptau}
	For $r>0$, there exist $ a_i=a_i(\phi,  \Lambda)>0, i=1,2,3$ such that
	$$a_1\phi(r)\leq\E^x[\tau_{B(x,r)}]\leq a_2\phi(r)\qquad\text{ and }\qquad\E^x[\tau^2_{B(x,r)}]\leq a_3\phi(r)^2.$$
\end{proposition}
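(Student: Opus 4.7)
The plan is to prove the three inequalities in sequence: the lower bound on $\E^x[\tau_{B(x,r)}]$ drops out of \autoref{p:exit}, while both upper bounds will follow from a single iteration argument built on the on-diagonal heat kernel estimate of \autoref{p:on_upper}.

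For the lower bound, I would apply \autoref{p:exit} with $A=1$ and $B=1/2$ to produce a constant $\varrho=\varrho(\phi)\in(0,1/2)$ such that $\Pp^x(\tau_{B(x,r)}<\varrho\phi(r))\le 1/2$ for every $x\in\Rd$ and $r>0$. Consequently,
\[
\E^x[\tau_{B(x,r)}]\ge \varrho\phi(r)\,\Pp^x\big(\tau_{B(x,r)}\ge\varrho\phi(r)\big)\ge \varrho\phi(r)/2,
\]
so one may take $a_1=\varrho/2$.

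For the upper bounds, the key uniform estimate to establish is
\[
\sup_{y\in B(x,r)}\Pp^y\big(\tau_{B(x,r)}>c_0\phi(r)\big)\le 1/2 \qquad(\star)
\]
for a suitable constant $c_0=c_0(\phi,d)>0$. To prove $(\star)$, I combine the inclusion $\{\tau_{B(x,r)}>t\}\subseteq\{X_t\in B(x,r)\}\subseteq\{X_t\in B(y,2r)\}$, valid for $y\in B(x,r)$, with the on-diagonal estimate from \autoref{p:on_upper} to obtain
\[
\Pp^y\big(\tau_{B(x,r)}>t\big)\le\int_{B(y,2r)}p(t,y,z)\,dz\le c_1\, r^d\,[\phi^{-1}(t)]^{-d}.
\]
The upper weak-scaling in $\mathbf{(WS)}$ then allows me to pick $c_0$ large enough so that the right-hand side does not exceed $1/2$ at $t=c_0\phi(r)$, uniformly in $r$, $x$ and $y$.

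Once $(\star)$ is in hand, applying the strong Markov property at the deterministic times $kc_0\phi(r)$, $k\in\N$, and using that $X_{kc_0\phi(r)}\in B(x,r)$ on $\{\tau_{B(x,r)}>kc_0\phi(r)\}$, yields the geometric decay $\Pp^x(\tau_{B(x,r)}>kc_0\phi(r))\le 2^{-k}$ by induction. The first moment upper bound then reads
\[
\E^x[\tau_{B(x,r)}]=\int_0^\infty\Pp^x\big(\tau_{B(x,r)}>t\big)\,dt\le c_0\phi(r)\sum_{k=0}^\infty 2^{-k}=2c_0\phi(r),
\]
giving $a_2=2c_0$. Writing $\E^x[\tau_{B(x,r)}^2]=2\int_0^\infty t\,\Pp^x(\tau_{B(x,r)}>t)\,dt$ and splitting the integral over the intervals $[kc_0\phi(r),(k+1)c_0\phi(r))$ produces the convergent series $\sum_k(k+1)2^{-k}$, bounding the second moment by a constant times $\phi(r)^2$. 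The computations are all routine; the only conceptual point is recognizing that the elementary inclusion $\{\tau_B>t\}\subseteq\{X_t\in B\}$, when paired with the on-diagonal heat kernel bound, already furnishes the uniform decay $(\star)$, after which the standard geometric iteration concludes everything.
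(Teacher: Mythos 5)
Your lower-bound argument (via \autoref{p:exit}) coincides with the paper's, but your upper-bound argument takes a genuinely different route. The paper establishes the uniform decay of $\Pp^x(\tau_{B(x,r)}>t)$ from below by counting large jumps: it applies the L\'evy system \eqref{eq:LSd} together with the lower bound $J(x,y)\ge\Lambda^{-1}J^\phi(x,y)$ and $\mathbf{(WS)}$ to get $\Pp^x(\tau\le t)\ge c[\phi(r)]^{-1}\,\E^x[t\wedge\tau]\ge c[\phi(r)]^{-1}t\,\Pp^x(\tau>t)$, then solves this self-improving inequality to find a time $t\asymp\phi(r)$ at which $\Pp^x(\tau>t)\le 1/2$. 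You instead bound $\Pp^y(\tau_{B(x,r)}>t)$ from above using the inclusion $\{\tau>t\}\subseteq\{X_t\in B(y,2r)\}$ (valid by conservativeness, \autoref{thm:cons}) and the on-diagonal bound $p(t,y,z)\le c[\phi^{-1}(t)]^{-d}$ from \autoref{p:on_upper}, then use $\mathbf{(WS)}$ to choose $c_0$ so that $c\,r^d[\phi^{-1}(c_0\phi(r))]^{-d}\le 1/2$. Both routes are correct and lead to the same geometric iteration via the strong Markov property. The paper's method is marginally more elementary in that it needs only the jump-kernel lower bound and the L\'evy system, not the heat-kernel estimate; your method trades that for a cleaner probabilistic inclusion, at the cost of invoking the on-diagonal kernel bound and conservativeness. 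Both are legitimate given what has been proved at this stage.
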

\begin{proof}
	We write $\tau=\tau_{B(x,r)}$ for simplicity. By \autoref{p:exit} with $A=1,B=\frac{1}{2}$, there exists ${ \varrho}={\varrho}(1,\frac{1}{2},\phi)$ such that
	\begin{align*}
	\E^x[\tau]\geq&{\varrho}\phi(r)\Pp^x(\tau\geq{\varrho}\phi(r))={ \varrho}\phi(r)[1-\Pp^x(\tau<{ \varrho}\phi(r))]\geq\frac{{\varrho}}{2}\phi(r).
	\end{align*}
	On the other hand,  the L\'evy system  in \eqref{eq:LSd} and \textbf{(WS)} imply that  for any $t>0$,
	\begin{align*}
	\Pp^x(\tau\leq t)\geq&\ \E^x\left[\sum_{s\leq t\wedge \tau}\1_{\{|X_s-X_{s-}|>2r\}}\right]=\ \E^x\left[\int_0^{t\wedge\tau}\sum_{i=1}^d\int_{|h|>2r}J(X_s, X_s+e^ih)dhds\right]\\
	\geq&\ \E^x\left[\int_0^{t\wedge\tau}\sum_{i=1}^d\int_{|h|>2r}\frac{\Lambda^{-1}}{|h|\phi(|h|)}dhds\right]\\
	\geq&\ c_1[\phi(r)]^{-1}\E^x[t\wedge\tau]\geq c_1[\phi(r)]^{-1}t\Pp^x(\tau>t).
	\end{align*}
	Thus, $\Pp^x(\tau>t)\leq 1-c_1[\phi(r)]^{-1}t\Pp^x(\tau>t)$. 
	Choose $t=c_1^{-1}\phi(r)$ so that $\Pp^x(\tau>t)\leq 1/2$. Using the Markov property at time $mt$ for $m=1,2,\dots,$ 
	$$\Pp^x(\tau>(m+1)t)\leq\E^x[\Pp^{X_{mt}}(\tau>t);\tau>mt]\leq \frac{1}{2}\Pp^x(\tau>mt).$$
	By induction, we obtain that $\Pp^x(\tau>mt)\leq 2^{-m}$ with $t=c_1^{-1}\phi(r)$, and hence
	$$\E^x[\tau]\leq\sum_{m=0}^\infty t\Pp^x(\tau>mt)\leq\sum_{m=0}^\infty t2^{-m}\leq a_2\phi(r).$$
	Similarly, with $t=c_1^{-1}\phi(r)$,
	$$\E^x[\tau^2]\leq\sum_{m=0}^\infty 2(m+1)t^2\Pp^x(\tau>mt)\leq\sum_{m=0}^\infty (m+1)2^{-(m-1)}t^2\leq a_3\phi(r)^2.$$
\end{proof}
\medskip

In order to prove the bounded harmonic functions associated with $X$ are H\"older continuous, we need a support theorem stated in the following.
\begin{theorem}[Support theorem]
	\label{thm:support}
	Let $\psi:[0,t_1]\to\R^d$  be continuous with $\psi(0)=x$ and the image of $\psi$ be contained in $B(0,1)$. For any $\varepsilon>0$, there exists a constant $c=c(\psi,\varepsilon,t_1)$ such that
	$$\Pp^x\left(\sup_{s\leq t_1}|X_s-\psi(s)|\leq \varepsilon\right)>c.$$
\end{theorem}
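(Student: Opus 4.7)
The plan is to build a sample path of $X$ that $\varepsilon$-shadows $\psi$ by discretizing $\psi$ in time and then, via the strong Markov property, concatenating on each piece the event that $X$ first sits near its current position and then performs a single coordinate jump to approximate the next vertex of the discretization. Concretely, I would use the uniform continuity of $\psi$ to choose a partition $0 = s_0 < s_1 < \ldots < s_N = t_1$ with $|\psi(s)-\psi(s_k)|\le \varepsilon/(8d)$ on each $[s_k, s_{k+1}]$. Writing the increment as $\psi(s_{k+1}) - \psi(s_k) = \sum_{i=1}^d u_k^i e^i$, I would further subdivide each $[s_k, s_{k+1}]$ into $d$ equal sub-intervals $I_{k,i}$ of length $\delta_k$; on $I_{k,i}$ the target event is that $X$ executes a single jump in coordinate $i$ of size close to $u_k^i$ and otherwise remains in a small ball around its position at the start of $I_{k,i}$.

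For a single such event, two tools are available. The exit-time estimate in \autoref{p:exit} gives, for any $A>0$ and $B\in(0,1)$, a time-scale comparable to $\phi(Ar)$ on which the probability of leaving a ball of radius $Ar$ is at most $B$; taking $r$ comparable to $\varepsilon/(16dN)$ and $\delta_k$ a small multiple of $\phi(r)$ makes the confinement probability close to $1$. The L\'evy system \eqref{eq:LSd}, combined with the lower bound $J\ge \Lambda^{-1}J^\phi$ and \textbf{(WS)}, shows that the rate at which a coordinate-$i$ jump of size in a prescribed window $[u_k^i-\eta,u_k^i+\eta]$ occurs is bounded below by a constant depending only on $u_k^i$, $\eta$, $\Lambda$, and $\phi$; hence the probability of at least one such targeted jump during $I_{k,i}$ is bounded below by a positive constant. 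A compensation argument, applied to the process with the targeted jump removed, makes it unlikely that a second large jump spoils the approximation. Iterating the strong Markov property across the $Nd$ sub-intervals and taking the product of the $Nd$ lower bounds yields the required positive constant $c(\psi,\varepsilon,t_1)$, while a triangle-inequality bookkeeping ($\varepsilon/(8d)$ discretization error, $\varepsilon/(16dN)$ confinement error per sub-interval, and a jump-approximation error bounded by $\eta$) verifies that $\sup_{s\le t_1}|X_s-\psi(s)|\le \varepsilon$ on the constructed event.

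The principal obstacle is that, unlike the isotropic setting where a single jump can approximate an arbitrary displacement, the jump measure of $X$ is supported on lines parallel to the coordinate axes, so each piece of $\psi$ must be approximated by a chain of $d$ consecutive coordinate-wise jumps. This forces a careful calibration: the sub-interval lengths $\delta_k$ must be small enough that the confinement estimate from \autoref{p:exit} dominates (which needs $\delta_k\ll \phi(r)$ with $r\ll \varepsilon$), yet large enough that the L\'evy system produces the targeted jump with positive probability and that the probability of the "one-good-jump, no-other-large-jump" event on $I_{k,i}$ remains bounded below, uniformly in the position at the start of $I_{k,i}$. A further technical point is that a priori $X$ is only defined off the exceptional set $\mathcal{N}$, so the statement should first be established for $x\in \R^d\setminus\mathcal{N}$; its extension to every starting point will follow, as indicated in the paragraph preceding the theorem, once the H\"older continuity of $p(t,x,y)$ developed in \autoref{sec:low} is used to refine $X$.
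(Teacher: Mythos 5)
Your overall strategy (discretize $\psi$, chain coordinate-wise jumps via the L\'evy system, confine via \autoref{p:exit}, iterate with the strong Markov property) is the right one and matches the argument in the reference that the paper's own proof cites: the paper checks that $\int_{\R^d}\mathfrak J_\delta(x,y)\,dy\asymp \phi(\delta)^{-1}$ and then invokes Lemmas 4.7--4.8 and Theorem 4.9 of \cite{Xu13}. However, your calibration has a real gap. You take the confinement radius $r\asymp \varepsilon/(16dN)$ and the sub-interval length $\delta_k=(s_{k+1}-s_k)/d\asymp t_1/(Nd)$, and you need $\delta_k\le c\,\phi(r)$ for \autoref{p:exit} to give confinement probability close to $1$. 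By {\bf(WS)}, $\phi(r)\le c\,r^{\la}$ for $r<1$, so the required inequality becomes $(Nd)^{\la-1}\le c\,\varepsilon^{\la}/t_1$, which bounds $N$ from above whenever $\la\ge1$. The discretization requirement $|\psi(s)-\psi(s_k)|\le \varepsilon/(8d)$ forces $N$ to be large --- arbitrarily large for continuous non-Lipschitz $\psi$, or for small $\varepsilon$ at fixed $t_1$ --- so for $\la\ge1$ the two requirements are incompatible and the scheme as written cannot be calibrated.

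The root cause is that you let the confinement error accumulate over all $Nd$ sub-intervals, which is what forces $r$ to shrink like $1/N$. The repair, which must underlie Xu's argument, is to aim each coordinate jump at the current \emph{milestone} $\psi(s_k)+\sum_{j\le i}u_k^je^j$ rather than at the raw increment $u_k^i$: at the jump time $\sigma$, target a displacement $v$ with $|X^i_{\sigma-}+v-(\psi(s_k)^i+u_k^i)|<\eta$. This resets the coordinate-$i$ deviation to $O(\eta)$ once per block $[s_k,s_{k+1}]$ and prevents linear accumulation, so that the confinement radius may be a fixed fraction of $\varepsilon$; the requirement $\delta_k\le c\,\phi(\rho)$ then holds for all $N$ large regardless of $\la$, and the L\'evy rate of the targeted window stays bounded below because its center (of magnitude at most $|u^i_k|+O(\varepsilon)$) stays bounded. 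The remaining ingredients of your plan --- Meyer-type compensation to exclude a second large jump, the strong Markov iteration, and working first on $\R^d\setminus\mathcal N$ --- are fine.
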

\begin{proof}
	This follows from the proof of \cite[Lemma 4.7, Lemma 4.8, Theorem  4.9]{Xu13}	with the fact that
	\begin{align*}
	\frac{c_1}{\phi(\delta)}\le	\int_\Rd \mathfrak J_{\delta}(x, y)dy \le \frac{c_2}{\phi(\delta)}, \qquad x\in \Rd
	\end{align*}
	for some constant $c_1, c_2>0$.
	Here $\mathfrak J_{\delta}(x,y)$ is the function defined in \eqref{eq:cJ} with $\delta$ instead of $1$.
\end{proof}
\medskip

For any Borel set $A\subset\R^d$, {we denote by $T_A:=\inf\{t>0:X_t\in A\}$  the hitting time of $X$ from $A$.}

\begin{corollary}
	\label{cor:support}
	For $r\in(0,1)$, let $x\in Q(0,1)$ with dist$(x,\partial Q(0,1))>r$. If $Q(z,r)\subset Q(0,1)$, then 
	$$\Pp^x(T_{Q(z,r)}<\tau_{Q(0,1)})\geq  c$$
	for some positive constant $c=c(r)>0$.
\end{corollary}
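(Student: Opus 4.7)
The plan is to apply the support theorem (\autoref{thm:support}) to a piecewise‑linear path that goes from $x$ to $z$ while staying comfortably inside $Q(0,1)$. Fix $t_1 := 1$ and define $\psi : [0,1] \to \R^d$ by linearly interpolating between $\psi(0) = x$ and $\psi(1/2) = z$, and setting $\psi(s) \equiv z$ for $s \in [1/2, 1]$. Since $Q(0,1)$ is convex, the function $\xi \mapsto \mathrm{dist}(\xi, \partial Q(0,1))$ is concave on $Q(0,1)$. The hypothesis gives $\mathrm{dist}(x, \partial Q(0,1)) > r$, while $Q(z,r) \subset Q(0,1)$ implies $\mathrm{dist}(z, \partial Q(0,1)) \geq r/2$. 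Hence every point in the image of $\psi$ has distance at least $r/2$ from $\partial Q(0,1)$; in particular the image is bounded and, after an obvious translation/rescaling, is contained in $B(0,1)$ so that \autoref{thm:support} applies.

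Take $\varepsilon := r/4$. By \autoref{thm:support} there exists a positive constant $c = c(r) > 0$ such that
$$\Pp^x\Bigl(\sup_{s \leq 1} |X_s - \psi(s)| \leq \varepsilon\Bigr) \geq c.$$
On this event, for every $s \in [0,1]$ one has $\mathrm{dist}(X_s, \partial Q(0,1)) \geq r/2 - r/4 = r/4 > 0$, so $X_s \in Q(0,1)$ and therefore $\tau_{Q(0,1)} \geq 1$. Moreover $|X_{1/2} - z| \leq \varepsilon = r/4 < r/2$, so $X_{1/2} \in Q(z, r)$, giving $T_{Q(z,r)} \leq 1/2$. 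Together these yield $T_{Q(z,r)} \leq 1/2 < 1 \leq \tau_{Q(0,1)}$, hence $\Pp^x(T_{Q(z,r)} < \tau_{Q(0,1)}) \geq c$.

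The only subtlety is that \autoref{thm:support} produces a constant depending on the specific path $\psi$, whereas we need uniformity in $(x, z)$ for a fixed $r$. This is not a serious obstacle: every admissible $\psi$ is a piecewise‑linear curve of length at most $\sqrt{d}$ parametrised on the same interval $[0,1]$, so the inputs controlling the constant in the proof of \autoref{thm:support} (the modulus of continuity and the number of linear pieces) are uniformly bounded, while $\varepsilon = r/4$ depends only on $r$. Thus one can extract a single $c = c(r) > 0$ working for all admissible $(x, z)$.
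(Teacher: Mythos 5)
Your proof is correct and follows essentially the same route as the paper: apply the support theorem to a path from $x$ to $z$ inside $Q(0,1)$ and choose $\varepsilon$ small enough that the tube around the path stays in $Q(0,1)$ and ends in $Q(z,r)$. Your choice of $\varepsilon = r/4$ (versus the paper's tighter $\varepsilon = r/2$) and the explicit remark on uniformity of the support-theorem constant over admissible paths make the argument a bit cleaner than the paper's brief ``with some modification,'' but the underlying idea is identical.
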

\begin{proof}
	Let $\psi$ be the line segment  contained in $B(0, \sqrt{d})$  from $x$ to $z$ with $\psi(0)=x$ and $\psi(1)=z$.
	Since dist$(x,\partial Q(0,1))>r$ and $Q(z,r)\subset Q(0,1)$,  by \autoref{thm:support}  with some modification, we have that
	$$\Pp^x(T_{Q(z,r)}<\tau_{Q(0,1)})\geq\Pp^x\left(\sup_{s\le1}|X_s-\psi(s)|\leq\frac{r}{2}\right)>c(r).$$
\end{proof}

\begin{proposition}
	\label{prop:Ttao}
	There exists a nondecreasing function $\varphi:(0,1)\to(0,1)$ such that {for any open set} $D\subset Q(0,1)$ with $|D|>0$,  and for any $x\in Q(0,\frac{1}{2})$, 
	$$\Pp^x(T_D<\tau_{Q(0,1)})\geq\varphi(|D|).$$
\end{proposition}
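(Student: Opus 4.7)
My strategy is to convert the hitting probability into a lower bound on the expected number of jumps into $D$ via the L\'evy system~\eqref{eq:LSd}, after first using Fubini to locate a thick set from which such jumps are plentiful.

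First, since $D\subset Q(0,1)$ is open with $|D|=\alpha>0$, Fubini in each direction $i$ gives
\[
\int_{\R^{d-1}} \ell_i(\bar x)\, d\bar x =\alpha, \qquad \ell_i(\bar x):=|\{t\in\R:(x^1,\dots,t,\dots,x^d)\in D\}|_1.
\]
Because $\ell_i\le 1$, a Markov-type argument produces a set $A_i\subset Q(0,1)_{d-1}$ with $|A_i|\ge\alpha/2$ on which $\ell_i(\bar x)\ge\alpha/2$. Setting $U:=\{x\in Q(0,1):\bar x^{(i)}\in A_i \text{ for some }i\}$, we have $|U|\ge\alpha/2$. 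For any $x\in U$, since all jumps landing in $D\subset Q(0,1)$ satisfy $|h|\le 1$ and $r\mapsto r\phi(r)$ is increasing,
\[
\lambda(x):=\sum_{i=1}^d \int_\R \1_D(x+e^i h)\, J(x,x+e^i h)\, dh
\;\ge\; \frac{1}{\Lambda\,\phi(1)}\,\ell_i(\bar x^{(i)}) \;\ge\; c_1\alpha.
\]

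Second, I would establish a lower bound of the form $\E^x[\int_0^{\tau_{Q(0,1)}}\1_U(X_s)\,ds]\ge c_2\alpha$ for $x\in Q(0,1/2)$. My plan is to partition $Q(0,1)$ into finitely many cubes $\{C_k\}_{k=1}^{N_0}$ of a fixed absolute side length $r_0$, apply \autoref{cor:support} to get a uniform bound $\Pp^y(T_{C_k}<\tau_{Q(0,1)})\ge c_3(r_0)$ for $y\in Q(0,1/2)$, and then use the strong Markov property together with \autoref{prop:exptau} to conclude $\E^x[\int_0^{\tau_{Q(0,1)}}\1_{C_k}(X_s)\,ds]\ge c_3(r_0)\cdot a_1\phi(r_0)$ for each $k$. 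Summing against the weights $|U\cap C_k|/|C_k|$ and using the Markov property inside each $C_k$ (to transfer the bound on $\E[\cdot; C_k]$ to a bound on $\E[\cdot; U\cap C_k]$) yields the desired occupation-time lower bound proportional to $|U|\ge\alpha/2$.

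Third, with the L\'evy system~\eqref{eq:LSd} applied to $f(s,X_{s-},X_s)=\1_D(X_s)\1_{X_{s-}\ne X_s}$ and $S=\tau_{Q(0,1)}$, the expected number $N$ of jumps into $D$ before exit satisfies
\[
\E^x[N]\;=\;\E^x\!\!\left[\int_0^{\tau_{Q(0,1)}}\!\!\!\lambda(X_s)\,ds\right]\;\ge\; c_1\alpha\cdot \E^x\!\!\left[\int_0^{\tau_{Q(0,1)}}\!\!\!\1_U(X_s)\,ds\right]\;\ge\; c\,\alpha^2.
\]
To pass from this to $\Pp^x(T_D<\tau_{Q(0,1)})=\Pp^x(N\ge1)$, I plan to use Paley--Zygmund: $\Pp^x(N\ge 1)\ge \E^x[N]^2/\E^x[N^2]$. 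A second application of the L\'evy system plus the strong Markov property lets us bound $\E^x[N^2]$ in terms of $\E^x[N]$ and $\sup_y\E^y[N^2\1_{\tau_{Q(0,1)}\le t}]$; the latter can be controlled using the already-established upper heat kernel bound of \autoref{thm:uphke} to integrate $\lambda$ against the transition density. This gives a nondecreasing function $\varphi(\alpha)$ of polynomial type.

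\textbf{Main obstacle.} The delicate step is the occupation-time lower bound in the second paragraph. The set $U$ is only a cylinder over a slice-set and need not contain any ball whose radius is controlled by $\alpha$, so \autoref{cor:support} cannot be applied to $U$ directly. The covering-and-conditioning argument sketched above requires care in how the conditional expectation $\E^{X_{T_{C_k}}}[\int_0^{\tau_{Q(0,1)}}\1_{U\cap C_k}(X_s)\,ds]$ is bounded below uniformly in the entry point $X_{T_{C_k}}\in\partial C_k$; this is where the weak scaling \textbf{(WS)} and the two-sided information on $\tau_{C_k}$ from \autoref{prop:exptau} must be combined, and it is the principal technical point one has to verify in full detail.
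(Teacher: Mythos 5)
Your plan is a genuinely different route from the paper's, and it has two real problems, one of which you flag yourself as "the main obstacle" but do not resolve.

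First, the jump count $N$ is ill-defined without a size cutoff. Since $D$ is open and $J(x,x+e^ih)\asymp(|h|\phi(|h|))^{-1}$ is non-integrable at $h=0$ (by {\bf(WS)}, $(r\phi(r))^{-1}\gtrsim r^{-1-\la}$ near $0$), the intensity $\lambda(x)=\sum_i\int_\R\1_D(x+e^ih)J(x,x+e^ih)\,dh$ is $+\infty$ for every $x$ in the interior of $D$, and $N=\infty$ a.s.\ on $\{T_D<\tau\}$. You would need $N_\delta$ counting only jumps with $|h|>\delta$ (say $\delta<\alpha/8$) for the L\'evy-system computation and the Paley--Zygmund step to make sense; this is repairable but is missing from the write-up.

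Second, and more seriously, the occupation-time lower bound $\E^x\big[\int_0^{\tau_{Q(0,1)}}\1_U(X_s)\,ds\big]\gtrsim |U|$ for an arbitrary cylinder-over-slice set $U$ is exactly the hard estimate, and the covering-and-conditioning sketch does not produce it. Knowing $\Pp^y(T_{C_k}<\tau_{Q(0,1)})\ge c_3$ and $\E^z[\tau_{B(z,r)}]\ge a_1\phi(r)$ (\autoref{prop:exptau}) only controls time spent in $C_k$, not in $U\cap C_k$; a set with $|U\cap C_k|\ge c|C_k|$ can still have tiny occupation time (think of a thin slab near $\partial C_k$), and there is no mechanism in \autoref{thm:support} or \autoref{prop:exptau} to transfer the bound. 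The natural way to close this would be a Green-function or near-diagonal transition-density lower bound on $C_k$, but the relevant statement, \autoref{p:lower_on}, is derived via \autoref{thm:Holder} $\leftarrow$ \autoref{prop:UHolder} $\leftarrow$ \autoref{harmonicthm} $\leftarrow$ \autoref{prop:Ttao}, i.e.\ it depends on the very proposition you are trying to prove, so invoking it here would be circular.

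The paper avoids this entirely by arguing in the opposite direction: it lower-bounds $\E^x[\tau]$ by $\E^x[\tau;T_D<\tau]+\E^x[\int_0^\tau\1_{D^c}(X_s)\,ds]$, bounds the second term \emph{from above} using the already-available on/off-diagonal heat-kernel \emph{upper} bound (\autoref{p:on_upper}/\autoref{thm:uphke}) and the fact that $|Q(0,1)\setminus D|$ is small, and then Cauchy--Schwarz plus \autoref{prop:exptau} gives a positive lower bound on $\Pp^x(T_D<\tau)$ when $|D|$ is close to $|Q(0,1)|$. The extension to all $|D|>0$ is then handled by the abstract covering/scaling lemma \cite[Prop.\ V.7.2, Thm.\ V.7.4]{bass1998diffusions} together with \autoref{cor:support}. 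In short, the paper's proof only needs an upper heat-kernel estimate, which is available at this point; your proof needs a matching lower occupation-time bound, which is not, and that is where the argument breaks.
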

\begin{proof}
	{Let $D\subset Q(0,1)$ be  an open set such that $|D|>0$ and  $x\in Q(0,\frac{1}{2})$.}
	We first show that 
	\begin{align}\label{eq:(1)}
	\Pp^x(T_D<\tau_{Q(0,1)})\geq c\ \text{ when }|Q(0,1)\setminus D|\ \text{ is small enough.}
	\end{align}
	For simplicity, we write $\tau$ for $\tau_{Q(0,1)}$. By \autoref{prop:exptau}, we observe that 
	\begin{align}
	\label{eqn:Ttau}
	 c_1\leq\E^x[\tau]=&\ \E^x[\tau;T_D<\tau]+\E^x\left[\int_0^\tau \1_{D^c}(X_s)ds\right]\nn\\
	\le& \ (\E^x[\tau^2])^{1/2}(\Pp^x(T_D<\tau))^{1/2}+\E^x\left[\int_0^\tau \1_{D^c}(X_s)ds\right]\nn\\
	\leq&\  c_2^{1/2}(\Pp^x(T_D<\tau))^{1/2}+\E^x\left[\int_0^\tau \1_{D^c}(X_s)ds\right].
	\end{align}
	By \autoref{prop:exptau} again, since	for $t_0>0$,
	$$\E^x[\tau-\tau\wedge t_0]\leq \E^x[\tau;\tau>t_0] \leq\left(\frac{\E^x[\tau^2]\E^x[\tau]}{t_0}\right)^{1/2}\le \frac{c_3}{\sqrt t_0},$$
	for some $c_3>0$,
	we can choose $t_0$ large enough such that $\E^x[\tau-\tau\wedge t_0]\leq \frac{ c_1}{4} \le t_0$. Then
	\begin{align*}
	\E^x\left[\int_0^\tau \1_{D^c}(X_s)ds\right]=&\E^x\left[\int_0^\tau \1_{Q(0,1)\setminus D}(X_s)ds\right]\leq \frac{ c_1}{4}+\E^x\left[\int_0^{t_0}\1_{Q(0,1)\setminus D}(X_s)ds\right]\\
	=&\ \frac{ c_1}{4}+\int_0^{t_0}\Pp^x(X_s\in Q(0,1)\setminus D)ds\\
	=&\ \frac{ c_1}{4}+\int_0^{\frac{ c_1}{4}}\Pp^x(X_s\in Q(0,1)\setminus D)ds+\int_{\frac{ c_1}{4}}^{t_0}\int_{Q(0,1)\setminus D}p(s,x,y)dyds\\
	\leq&\ \frac{ c_1}{2}+|Q(0,1)\setminus D|\int_{\frac{c_1}{4}}^{t_0}{ c_4}[\phi^{-1}(s)]^{-d}ds.
	\end{align*}
	The last inequality comes from \autoref{p:on_upper}.
	Since $\int_{\frac{ c_1}{4}}^{t_0}c[\phi^{-1}(s)]^{-d}ds$ is bounded by a constant depending on $t_0$ and $ c_1$, letting $|Q(0,1)\setminus D|$ be small enough, we have that $\E^x\left[\int_0^\tau \1_{D^c}(X_s)ds\right]\leq{3 c_1}/{4}$. Therefore \eqref{eqn:Ttau} yields
	$\Pp^x(T_D<\tau)\geq  {c_1^2}/{16 c_2}$
	and we have proved \eqref{eq:(1)}.  
	
	Let 
	$\varphi(\varepsilon):=\inf \big\{\Pp^y(T_D<\tau_{Q(0, 1)}):y\in Q(0, \tfrac{1}{2}),\ D\subset Q(0,1),\ |D|\ge \varepsilon |Q(0,1)| \big\}.$
	By \eqref{eq:(1)}, $\varphi(\varepsilon)>0$ for $\varepsilon$ sufficiently close to $1$. By \cite[Proposition V.7.2]{bass1998diffusions} together with \autoref{cor:support}, 
	one can follow the proof in \cite[Theorem V.7.4]{bass1998diffusions}   to show that 
	$\varepsilon_0=\inf\{\varepsilon:\varphi(\varepsilon)>0\}=0$.
	Therefore, we conclude our assertion with $\varphi$.
\end{proof}
\medskip

A function $h$ is called harmonic with respect to $X$ in a domain $D\subset \Rd$ if $h(X_{t\wedge\tau_D})$ is a martingale with respect to $\Pp^x$  for every $x$ in $D$.

\begin{theorem}
	\label{harmonicthm}
	For any $x_0\in \Rd$ and $r\in(0,1)$, suppose that $h$ is harmonic in $B(x_0,r)$ with respect to $X$ and bounded in $\R^d$. Then there exist constants $c,\beta>0$ such that
	\begin{equation*}
	%	\label{hHolder}
	|h(x)-h(y)|\leq c\left(\frac{|x-y|}{r}\right)^\beta\|h\|_\infty,\qquad 
	\text{ for }x,y\in B(x_0,r/2).
	\end{equation*}
\end{theorem}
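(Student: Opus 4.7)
The plan is to adapt the Krylov--Safonov oscillation reduction argument along a geometric sequence of balls to the anisotropic pure-jump setting. By translation and linearity one may assume $x_0 = 0$ and $\|h\|_\infty \leq 1$. Fix a small $\delta \in (0, 1/4)$ to be chosen below, and set $r_k := \delta^k r$, $B_k := B(0, r_k)$, $\tau_k := \tau_{B_k}$, $M_k := \sup_{B_k} h$, $m_k := \inf_{B_k} h$, and $a_k := M_k - m_k$. The Hölder estimate with exponent $\beta = \log(1/\rho)/\log(1/\delta)$ will follow from a geometric decay $a_k \leq C\rho^k$ with $\rho \in (0,1)$ independent of $k$, by the standard comparison of $|x-y|$ with the smallest $r_k$ for which $x, y \in B_k$.

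The core step is a one-step oscillation reduction. Set $c_{k-1} := (M_{k-1}+m_{k-1})/2$, and, replacing $h$ by $-h$ if necessary, let $A := \{y \in B_{k-1}\setminus B_k : h(y) \leq c_{k-1}\}$, so that $|A| \geq |B_{k-1}\setminus B_k|/2 \geq c(\delta,d)\,|B_{k-1}|$. After rescaling $B_{k-1}$ to the unit cube, \autoref{prop:Ttao} yields $\Pp^x(T_A < \tau_{k-1}) \geq c_0 > 0$ uniformly for $x \in B_k$, with $c_0$ independent of $k$. Applying optional stopping to the bounded martingale $h(X_{t \wedge \tau_{k-1}})$ at $\sigma := T_A \wedge \tau_{k-1}$, one splits according to $\{T_A < \tau_{k-1}\}$ (where $X_\sigma \in A$ and hence $h(X_\sigma)\leq c_{k-1}$) versus $\{T_A \geq \tau_{k-1}\}$ (where $X_\sigma = X_{\tau_{k-1}} \in B_{k-1}^c$, with $h \leq M_{k-2}$ on $B_{k-2}\setminus B_{k-1}$ and $|h|\leq 1$ on $B_{k-2}^c$). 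Combined with the trivial lower bound $\inf_{B_k} h \geq m_{k-2}$, one obtains a two-step recursion of the form
$$a_k \leq \gamma\, a_{k-2} + C_*\, \omega,$$
where $\gamma := 1/(1+c_0/2) < 1$ and $\omega := \Pp^x(X_{\tau_{k-1}} \in B_{k-2}^c)$ is the probability of a \emph{distant} exit.

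The main technical obstacle is estimating $\omega$ in the anisotropic setting, since the Lévy measure of $X$ is supported only on the coordinate-axis rays through each point, so the usual isotropic tail estimates do not apply directly. Using the Lévy system \eqref{eq:LSd}, the exit-time bound $\E^x[\tau_{k-1}] \leq a_2\phi(r_{k-1})$ from \autoref{prop:exptau}, and the one-dimensional tail integral $\int_R^\infty (u\phi(u))^{-1}\,du \leq c/\phi(R)$ following from $\mathbf{(WS)}$ applied along each of the $d$ coordinate axes separately, one obtains
$$\omega \leq a_2\phi(r_{k-1}) \cdot d \cdot \frac{C\Lambda}{\phi(r_{k-2})} \leq C'\,\delta^{\la},$$
uniformly in $k$. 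The anisotropy is harmless here because one simply sums $d$ one-dimensional tail integrals, each with the same $\mathbf{(WS)}$-controlled decay.

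To convert the recursion $a_k \leq \gamma a_{k-2} + C_*\delta^{\la}$ into true geometric decay, one refines the shell decomposition $B_{k-2}^c = \bigsqcup_{j=0}^{k-3}(B_j \setminus B_{j+1}) \sqcup B_0^c$ and uses the sharper bound $h \in [m_{j-1}, M_{j-1}]$ on each shell $B_{j-1}\setminus B_j$ in place of $\|h\|_\infty$; the resulting weighted sum $\sum_j a_{j-1}\,\phi(r_{k-1})/\phi(r_j)$ is geometric and can be controlled by a discrete Gronwall argument after choosing $\delta$ sufficiently small that $C_*\delta^{\la} < (1-\gamma)/2$. This yields $a_k \leq C\rho^k\|h\|_\infty$ for some $\rho = \rho(d,\Lambda,\phi) \in (0,1)$, from which the Hölder estimate $|h(x)-h(y)| \leq c(|x-y|/r)^\beta \|h\|_\infty$ for $x,y \in B(x_0, r/2)$ follows in the standard way.
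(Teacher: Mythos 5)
Your argument and the paper's are both Krylov--Safonov oscillation-reduction schemes, and both rest on the same hitting-probability estimate \autoref{prop:Ttao}, so the main ingredient is shared. The route, though, is genuinely different. The paper transplants Bass's diffusion proof (Theorem V.7.5) with minimal change: normalize $h\in[0,1]$ on $B(z,s)$, take $D=\{h\geq\tfrac12\}\cap B(z,s/2)$, and read off the halving $\mathrm{Osc}_{B(z,s/2)}h\leq\rho\,\mathrm{Osc}_{B(z,s)}h$ directly from $h(x)\geq\tfrac12\Pp^x(T_D<\tau_{B(z,s)})$. That last inequality is built on $h(X_{\tau_{B(z,s)}})\geq0$, which a diffusion satisfies automatically but a jump process need not, since $X_{\tau_{B(z,s)}}$ can land far outside $B(z,s)$ where only $|h|\leq\|h\|_\infty$ is available; this long-range term is not visibly handled in the paper's streamlined version. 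Your version supplies exactly that correction and is in fact the standard way this lemma is proved for nonlocal operators (Bass--Levin, Chen--Kumagai): split the exit position over the shells $B_j\setminus B_{j+1}$, estimate the probability of a jump across $m$ scales by the L\'evy system \eqref{eq:LSd}, \autoref{prop:exptau} and \textbf{(WS)} (yielding a factor of order $\phi(r_{k-1})/\phi(r_{k-1-m})\leq\lC^{-1}\delta^{\la m}$; your remark that the anisotropy is harmless here is right, since the tail is a sum of $d$ one-dimensional integrals), and then close the recursion with a discrete Gronwall induction, carrying $\|h\|_\infty$ through. What the paper's route buys is brevity; what yours buys is a recursion that actually closes for a jump process, which is why the theorem's right-hand side must contain $\|h\|_\infty$ and a pure scale-by-scale oscillation halving should not be expected. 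Two small caveats on your write-up: from $M_k\leq c_{k-1}p+M_{k-2}(1-p)+C\omega$ with $p\geq c_0$ the natural coefficient is $\gamma=1-c_0/2$ rather than $1/(1+c_0/2)$; and the closing Gronwall step (propagating $a_j\leq M\theta^j\|h\|_\infty$) is stated very tersely --- one must fix $\theta\in(\delta^{\la},1)$ with $\theta^2>\gamma$ and then take $\delta$ small enough that the geometric tail sum is absorbed, and this order of choices should be spelled out.
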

\begin{proof}
	The proof is similar to that of \cite[Theorem V.7.5]{bass1998diffusions}. For the reader's convenience, we give the modified proof here. Define $\text{Osc}_Dh:=\sup_{x\in D}h(x)-\inf_{x\in D}h(x)$. To prove the result, it suffices to show there exists $\rho<1$ such that for all $z\in B(x_0,r/2)$ and $s\leq r/4$, 
	\begin{equation}
	\label{eqn:Osc}
	\underset{B(z,s/2)}{\text{Osc}}h\leq \rho\underset{B(z,s)}{\text{Osc}}h.
	\end{equation}
	Without loss of generality, we can assume $\inf_{B(z,s)}h=0$ and $\sup_{B(z,s)}h=1$, 
	otherwise we can do a linear transform of $h$. 
	Let $D=\{x\in B(z,s/2):h(x)\geq1/2\}$. We can assume $|D|\geq \frac{1}{2}|B(z,s/2)|$, otherwise we replace $h$ by $1-h$.  Since $h$ is harmonic,
	\begin{align*}
	h(x)=&\E^x[h(X_{ \tau_{B(z,s)}\wedge T_D})]\geq \frac{1}{2}\ \Pp^x(T_D<\tau_{B(z,s)}) \qquad \text{ for } x\in B(z, s),
	%\geq&\frac{1}{2}\ \varphi(|D|/|B(z,s/2)|).
	\end{align*}
	and by \autoref{prop:Ttao} with the scaling, $D^s:=s^{-1}D$, we have
	$$\Pp^x(T_D<\tau_{B(z,s)})=\Pp^{ s^{-1}x}(T_{D^s}<\tau_{B(s^{-1}z,1)})\geq \varphi(|D^s|)\ge \varphi(2^{-1}|B(z, 1/2)|)\ge \varphi(2^{-(d+1)}).$$
	 for  $x\in B(z, s)$.
	Taking $\rho=1-\varphi(2^{-(d+1)})/2$ proves \eqref{eqn:Osc}.
\end{proof}
\medskip 

We now show the H\"older continuity of $\lambda$-resolvent ${U_\lambda}$ for the process $X$, that is,
$${ U_\lambda} f(x)=\E^x\left[\int_0^\infty e^{-\lambda t}f(X_t)dt\right]=\int_0^{\infty} e^{-\lambda t}P_t f(x)dt,$$
{ where $P_t$ is the corresponding semigoup operator  defined in \autoref{subsec:Nash}.}
\begin{proposition}
	\label{prop:UHolder}
	If $f$ is bounded, there exists $c=c(\lambda)$ and $ \varsigma>0$ such that $$|{ U_\lambda} f(x)-{ U_\lambda} f(y)|\leq c|x-y|^{ \varsigma}\|f\|_{\infty}, \qquad\text{ for }|x-y|<1.$$
\end{proposition}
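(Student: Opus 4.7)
The plan is to decompose $U_\lambda f$ via the strong Markov property at the exit time of a small ball, isolate a bounded harmonic piece, and then invoke Theorem~\ref{harmonicthm}. Fix $x,y\in\R^d$ with $|x-y|<1$ and choose a radius $r\in(0,1)$ (to be optimised later) with $|x-y|<r/2$. Setting $\tau:=\tau_{B(x,r)}$, the strong Markov property gives
\begin{align*}
U_\lambda f(z) &= \E^z\Bigl[\int_0^\tau e^{-\lambda s}f(X_s)\,ds\Bigr] + \E^z\bigl[e^{-\lambda\tau}U_\lambda f(X_\tau)\bigr]\\
&= g(z) + \tilde h(z) - e(z),
\end{align*}
where $\tilde h(z):=\E^z[U_\lambda f(X_\tau)]$ and $e(z):=\E^z[(1-e^{-\lambda\tau})U_\lambda f(X_\tau)]$. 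Since $\|U_\lambda f\|_\infty\le \lambda^{-1}\|f\|_\infty$ and $\E^z[\tau]\le a_2\phi(r)$ by Proposition~\ref{prop:exptau}, the elementary inequality $1-e^{-\lambda\tau}\le \lambda\tau$ yields the uniform bounds $|g(z)|\le a_2\phi(r)\|f\|_\infty$ and $|e(z)|\le a_2\phi(r)\|f\|_\infty$.

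Next I would observe that $\tilde h$ is bounded by $\lambda^{-1}\|f\|_\infty$ and is harmonic in $B(x,r)$ with respect to $X$: the harmonicity follows from the strong Markov property applied at $t\wedge \tau$, which makes $\tilde h(X_{t\wedge\tau})$ a martingale. Theorem~\ref{harmonicthm} then supplies
$$|\tilde h(x)-\tilde h(y)|\le c_1\Bigl(\frac{|x-y|}{r}\Bigr)^{\beta}\lambda^{-1}\|f\|_\infty,$$
valid because $y\in B(x,r/2)$. Collecting the three pieces at $x$ and $y$ produces
$$|U_\lambda f(x)-U_\lambda f(y)|\le c_2\,\phi(r)\,\|f\|_\infty+c_3\Bigl(\frac{|x-y|}{r}\Bigr)^{\beta}\lambda^{-1}\|f\|_\infty.$$

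To finish, I would choose $r$ as a power of $|x-y|$. Taking $r:=2|x-y|^{1/2}$ guarantees $|x-y|<r/2$ whenever $|x-y|<1$, and forces $r<1$ as soon as $|x-y|<1/4$. The weak scaling condition \textbf{(WS)} then bounds $\phi(r)$ by $c\,|x-y|^{\la/2}$, while the harmonic term is of order $|x-y|^{\beta/2}$, so the H\"older estimate holds with $\varsigma:=\min(\la/2,\beta/2)$ on $\{|x-y|<1/4\}$. The remaining range $1/4\le |x-y|<1$ is handled trivially via $|U_\lambda f(x)-U_\lambda f(y)|\le 2\|U_\lambda f\|_\infty\le 2\lambda^{-1}\|f\|_\infty$. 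The only delicate point is the balance between the two error types: small $r$ shrinks $g$ and $e$ but inflates $(|x-y|/r)^{\beta}$ in the harmonic bound. This tradeoff is resolvable precisely because $\phi$ has strictly positive lower scaling order $\la>0$, so $\phi(r)$ decays at least polynomially in $r$; absent this, no power-type gain would be available from the exit-time estimate and the scheme would break down.
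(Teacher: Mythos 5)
Your proposal is correct and follows essentially the same route as the paper: decompose $U_\lambda f$ via the strong Markov property at the exit time of a ball of radius $r$, control the two error terms by $\E^\cdot[\tau]\lesssim \phi(r)$ (Proposition~\ref{prop:exptau}), apply Theorem~\ref{harmonicthm} to the remaining bounded harmonic piece, and then balance by taking $r\asymp|x-y|^{1/2}$ and invoking \textbf{(WS)} to convert $\phi(r)$ into a power of $|x-y|$. If anything, fixing a single exit time $\tau=\tau_{B(x,r)}$ for both decompositions, as you do, states the harmonicity step more carefully than the paper's passing reference to $z\mapsto\E^z[U_\lambda f(X_{\tau_z})]$, but the underlying argument is identical.
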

\begin{proof}
	For any $x, y\in \Rd$ with $|x-y|<1$, consider $x_0\in\R^d$ and $ r:=|x-y|^{1/2}\in (0,1)$ so that $x,y\in B(x_0,r/2)$.
	Write $\tau_x=\tau_{B(x,r)}$ for simplicity. 
	By the strong Markov property, we have that
	$$
	{ U_\lambda} f(x)=\E^x \left[\int_0^{\tau_x}e^{-\lambda t}f(X_t)dt \right]+\E^x\left[(e^{-\lambda\tau_x}-1){U_\lambda} f(X_{\tau_x})\right]+\E^x\left[{ U_\lambda} f(X_{\tau{_x}})\right].$$
	Using the similar expression where $x$ is replaced by $y$,
	\autoref{prop:exptau} and the mean value theorem imply that
	\begin{align*}
	|{U_\lambda} f(x)-{ U_\lambda} f(y)|\leq &\left(\E^x[\tau_x]+\E^y[\tau_y]\right)\left(\|f\|_\infty+\lambda\|{ U_\lambda} f\|_\infty\right)+\left|\E^x[{U_\lambda} f(X_{\tau_x} )]-\E^y[{ U_\lambda} f(X_{\tau_y})]\right|\\
	\leq&\ { 2 a_2}\phi(r)\left(\|f\|_\infty+\lambda\|{U_\lambda} f\|_\infty\right)+\left|\E^x[{U_\lambda} f(X_{\tau_x})]-\E^y[{ U_\lambda} f(X_{\tau_y})]\right|.
	\end{align*}
	Since $z\to\E^z \left[{U_\lambda} f(X_{\tau_z})\right]$ is bounded and harmonic in $B(x_0,r)$, applying \autoref{harmonicthm}, we have that 
	$$\left|\E^x[{ U_\lambda} f(X_{\tau_x})]-\E^y[{U_\lambda} f(X_{\tau_y})]\right|
	\leq c_1\left(\frac{|x-y|}{r}\right)^\beta\|{ U_\lambda} f\|_\infty$$
	for some $c_1, \beta>0$.
	Since $r=|x-y|^{1/2}< 1$,  {\bf(WS)} yields $\phi(r)\le \lC^{-1}\phi(1) |x-y|^{\la/2}$.
	Therefore, using the fact that $\|{U_\lambda} f\|_\infty\leq \frac{1}{\lambda}\|f\|_{\infty}$, we have that
	\begin{align*}
	|{U_\lambda} f(x)-{ U_\lambda} f(y)| \leq&\ c_2\left(\phi(r)+\left(\frac{|x-y|}{r}\right)^\beta\right)\|f\|_\infty\\
	\leq & \ c_3(|x-y|^{\la/2}+|x-y|^{\beta/2})\|f\|_\infty.
	\end{align*}
\end{proof}

\begin{remark}\label{rem:sFP}
	{By \autoref{prop:UHolder} with \cite[(2.16) on page 77]{BlGe68}, we conclude that $X$ is a strong Feller process.}
\end{remark}

According to the spectral theorem, there exist projection operators $E_\mu$ on  $L^2(\R^d)$ such that
$$f=\int_0^\infty dE_\mu (f),\ P_tf=\int_0^\infty e^{-\mu t}dE_\mu(f)\quad
\text{and}\quad {U_\lambda} f=\int_0^\infty \frac{1}{\lambda+\mu}dE_\mu(f).$$
\begin{theorem}
	\label{thm:Holder}
	For $f\in L^2(\R^d)$, $P_tf$ is equal a.e. to a function that is H\"older continuous. Hence, we can refine $p(t,x,y)$ to be jointly continuous for any $t>0$ and $x,y\in\R^d$.
\end{theorem}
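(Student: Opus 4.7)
The plan is to combine the resolvent H\"older estimate of \autoref{prop:UHolder} with the $L^2\to L^\infty$ smoothing from \autoref{p:on_upper} by inserting the Yosida-type approximation $\lambda U_\lambda\to \mathrm{id}$ into the difference $P_tf(x)-P_tf(y)$ and optimizing $\lambda$ as a function of $|x-y|$.

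First, for $f\in L^2(\R^d)$ and $t>0$, \autoref{p:on_upper} together with Cauchy--Schwarz and $\|p(t,x,\cdot)\|_2^2=p(2t,x,x)$ yields the ultracontractive bound $\|P_tf\|_\infty\le C[\phi^{-1}(t)]^{-d/2}\|f\|_2$. Setting $g:=P_{t/2}f$, the spectral representation stated just above the theorem gives
\[
\|g-\lambda U_\lambda g\|_2^2=\int_0^\infty\frac{\mu^2\,e^{-\mu t}}{(\lambda+\mu)^2}\,d\|E_\mu f\|^2\le \frac{c}{\lambda^2 t^2}\|f\|_2^2,
\]
using $\sup_\mu \mu^2 e^{-\mu t}=4/(et)^2$ and $(\lambda+\mu)^2\ge \lambda^2$. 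Since $P_t$ commutes with $U_\lambda$, one has $P_tf-\lambda U_\lambda P_tf=P_{t/2}(g-\lambda U_\lambda g)$, and applying the on-diagonal bound once more to the outer $P_{t/2}$ gives the quantitative approximation
\[
\|P_tf-\lambda U_\lambda P_tf\|_\infty\le C[\phi^{-1}(t/2)]^{-d/2}\|g-\lambda U_\lambda g\|_2\le \frac{c_1(t)\|f\|_2}{\lambda}.
\]

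Next, since $P_tf$ is bounded, a careful reading of the proof of \autoref{prop:UHolder} with $r=|x-y|^{1/2}$, together with \textbf{(WS)} giving $\phi(r)\le c r^{\la}$ for $r<1$, yields for $|x-y|<1$
\[
|\lambda U_\lambda P_tf(x)-\lambda U_\lambda P_tf(y)|\le c_2(t)\|f\|_2\,\bigl(\lambda|x-y|^{\la/2}+|x-y|^{\beta/2}\bigr),
\]
where $\beta$ is the H\"older exponent from \autoref{harmonicthm}. Combining this with the previous approximation via the triangle inequality and choosing $\lambda=|x-y|^{-\la/4}$ to balance the $1/\lambda$ and $\lambda|x-y|^{\la/2}$ contributions leads to
\[
|P_tf(x)-P_tf(y)|\le C(t)\|f\|_2\,\bigl(|x-y|^{\la/4}+|x-y|^{\beta/2}\bigr),
\]
so $P_tf$ agrees a.e.\ with a locally H\"older continuous function. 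To upgrade to joint continuity of $p(t,x,y)$, fix $t>0$ and $y_0\in\R^d\setminus\mathcal N$, and apply the estimate above to $f=p(t/2,y_0,\cdot)\in L^2(\R^d)$ (since $\|f\|_2^2=p(t,y_0,y_0)<\infty$ by \autoref{p:on_upper}). Chapman--Kolmogorov together with symmetry of $p$ identifies $P_{t/2}f(x)=p(t,x,y_0)$, so $x\mapsto p(t,x,y_0)$ is H\"older continuous with constants locally uniform in $(t,y_0)$; symmetry in the spatial arguments transfers the estimate to the $y$ variable, giving joint H\"older continuity in $(x,y)$ for each $t>0$. Continuity in $t$ then follows from another Chapman--Kolmogorov step combined with dominated convergence based on the off-diagonal bounds of \autoref{thm:uphke}. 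Once $p(t,x,y)$ admits a jointly continuous extension to $\R_+\times\R^d\times\R^d$, the Hunt process $X$ can be refined to start from every point of $\R^d$, completing the proof of \autoref{mainthm}.

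The main obstacle is the quantitative trade-off hidden in the optimization: the approximation error $\|P_tf-\lambda U_\lambda P_tf\|_\infty$ decays only like $1/\lambda$, while the H\"older seminorm of $\lambda U_\lambda P_tf$ grows linearly in $\lambda$, so every factor in the rate has to be tracked. What makes the optimization succeed is that $g=P_{t/2}f$ automatically lies in arbitrarily regular $L^2$-spectral subspaces (the bound $\sup_\mu\mu^2 e^{-\mu t}<\infty$ providing the usable $1/\lambda$ rate on $\|g-\lambda U_\lambda g\|_2$); without the semigroup's smoothing, no positive H\"older exponent could be extracted from the resolvent estimate.
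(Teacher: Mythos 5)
Your proposal is correct, but it takes a genuinely different route from the paper's proof.  The paper does not approximate: it exploits the spectral calculus to produce an \emph{exact} representation $P_tf = U_\lambda h$, where $h := \int_0^\infty(\lambda+\mu)e^{-\mu t}\,dE_\mu(f)$.  The bounded factor $(\lambda+\mu)e^{-\mu t}$ makes $h$ lie in $L^2$, and a duality argument ($\langle h,g\rangle \le c\|f\|_2\|P_{t/4}g\|_2 \le c\|f\|_2\|g\|_1$) gives $\|h\|_\infty \le c\|f\|_2$.  With that, \autoref{prop:UHolder} applies directly to $U_\lambda h$ with no optimization, yielding a H\"older exponent $\min(\la/2,\beta/2)$.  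You instead use the Yosida approximation $\lambda U_\lambda P_tf \to P_tf$ and observe that the factor $\mu^2 e^{-\mu t}$ is bounded, giving the $O(1/\lambda)$ rate $\|P_tf-\lambda U_\lambda P_tf\|_\infty \lesssim \|f\|_2/\lambda$ via the $L^2\to L^\infty$ smoothing of $P_{t/2}$; you then balance this against the $\lambda$-dependence of the resolvent H\"older seminorm (which you correctly track through the proof of \autoref{prop:UHolder}, since that proposition as stated buries the $\lambda$-dependence in $c(\lambda)$).  The optimization $\lambda = |x-y|^{-\la/4}$ gives the weaker exponent $\min(\la/4,\beta/2)$, which is still positive, so the conclusion stands.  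What the paper's approach buys is the exact identity (no trade-off, better exponent, cleaner proof); what your approach buys is a more generic interpolation scheme that does not require you to guess the right weight $(\lambda+\mu)e^{-\mu t}$, only to know that semigroup smoothing controls the spectral tail.  Your closing remarks on joint continuity (apply the estimate to $f=p(t/2,y_0,\cdot)$, transfer by symmetry) match the paper's; your additional observation about $t$-continuity via dominated convergence is not in the paper's proof but is harmless.
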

\begin{proof}
	Let $\lambda, \mu$ and $t>0$. For any $f\in L^2(\R^d)$,  define 
	\begin{align}\label{eq:h}
	h:=h(f):=\int_0^\infty(\lambda+\mu)e^{-\mu t}d E_\mu(f).
	\end{align}
	Then $h\in L^2(\R^d)$, since $\sup_\mu(\lambda+\mu)^2e^{-2\mu t}\leq c_1$ gives that
	\begin{align*}
	\|h\|_2^2=&\int_0^\infty (\lambda+\mu)^2e^{-2\mu t}d\langle E_\mu(f),E_\mu(f)\rangle\\
	\leq &\ c_1\int_0^\infty d\langle E_\mu(f),E_\mu(f)\rangle=c_1\|f\|_2^2<\infty.
	\end{align*}
	For any $g\in L^2(\R^d)$, note that $\|P_tg\|_1\leq\|g\|_1$ and $\|P_tg\|_\infty\leq c_2\|g\|_1$ since $p(t,x,y)$ is bounded. Then it follows that $\|P_tg\|_2\leq c_3\|g\|_1$. By Cauchy-Schwartz inequality, 
	\begin{align*}
	\langle h,g\rangle =&\int_0^\infty(\lambda+\mu)e^{-\mu t}d\langle E_\mu(f),E_\mu(g)\rangle\\
	\leq&\left(\int_0^\infty(\lambda+\mu)e^{-\mu t}d\langle E_\mu(f),E_\mu(f)\rangle\right)^{1/2}\left(\int_0^\infty(\lambda+\mu)e^{-\mu t}d\langle E_\mu(g),E_\mu(g)\rangle\right)^{1/2}\\
	\leq&\ c_4\left(\int_0^\infty d\langle E_\mu(f),E_\mu(f)\rangle\right)^{1/2}\left(\int_0^\infty e^{-\mu t/2}d\langle E_\mu(g),E_\mu(g)\rangle\right)^{1/2}\\
	=&\ c_4\|f\|_2\|P_{ t/4}g\|_2\leq c_5\|f\|_2\|g\|_1.
	\end{align*}
	Thus, we have $\|h\|_\infty\leq c_5\|f\|_2< c_*$ by taking the supremum for $g\in\{u\in L^1(\R^d):\|u\|_1\leq1\}$.
 Since
		\begin{align}\label{eq:res_sem}
		{U_\lambda} h=\int_0^\infty e^{-\mu t}dE_\mu (f)=P_tf\qquad \text{ a.e.}
		\end{align}
	we have our first assertion  by \autoref{prop:UHolder}.
	Fix $y$ and let $\widetilde f(z)=p(t/2,z,y)$. By \autoref{p:on_upper}, 
	since $\|\widetilde f\|_1=1$ and 
	$\|\widetilde f\|_\infty\leq \frac{c_6}{[\phi^{-1}(t)]^d}$, $$\|\widetilde f\|_2\leq\|\widetilde f\|_\infty\|\widetilde f\|_1\leq \frac{c_6}{[\phi^{-1}(t)]^d},\ 
	\text{ that is, }\  \widetilde f\in L^2(\Rd).$$
	Since \autoref{p:on_upper} again implies that
	$$p(t,x,y)=\int_{\R^d}p(t/2,x,z)p(t/2,z,y)dz=\int_{\R^d}p(t/2,x,z)\widetilde  f(z)dz=P_{t/2}\widetilde f(x),$$
	by	\autoref{prop:UHolder} and \eqref{eq:res_sem}, we have that 
	\begin{align*}
	|p(t,x,y)-p(t,z,y)|=&|P_{t/2}\widetilde  f(x)-P_{t/2}\widetilde  f(z)|=|{ U_\lambda} \widetilde h(x)-{ U_\lambda} \widetilde h(z)|\\
	\leq &\ c|x-z|^{\varsigma}\|\widetilde h\|_\infty\qquad\qquad \text{for $|x-z|<1$,}
	\end{align*}
	where $\widetilde h=\widetilde{h}(\widetilde{f})$ defined in \eqref{eq:h} satisfies $\|\widetilde h\|_{\infty}<c_*$.
	Hence, $p(t,x,y)$ is H\"older continuous with constants independent of $x, y$, and 
	the symmetry of $p(t, x, y)$ gives the joint continuity of $p(t, x, y)$.	
\end{proof}
\subsection{Lower bounds}
For the lower bound of $p(t, x, y)$, we first obtain the following on-diagonal estimate.
\begin{proposition}\label{p:onlower}
	There exists $c=c(\phi)\in (0, 1)$ such that
	\begin{align*}
	p(t, x, x)\ge \frac{c}{[\phi^{-1}(t)]^{d}} \qquad\text{ for }\ t>0, \  x\in \Rd.
	\end{align*}
\end{proposition}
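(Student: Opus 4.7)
The plan is to combine the exit time estimate in \autoref{cor:exit_new} with the Chapman--Kolmogorov identity and the Cauchy--Schwarz inequality, which is the standard route for near-diagonal lower bounds in this setting.

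First I would fix $T=1$ in \autoref{cor:exit_new} and choose $r=r(\phi)\ge 1$ large enough so that
\begin{align*}
\Pp^x\big(\tau_{B(x,r\phi^{-1}(t))}<t\big)\le \frac{ct}{\phi(\phi^{-1}(t)r)}\le \frac{c}{\lC r^{\la}}\le \frac{1}{2},
\end{align*}
where the middle inequality uses \textbf{(WS)}. This is uniform in $x\in\Rd$ and $t>0$, and in particular yields
\begin{align*}
\Pp^x\big(X_t\in B(x,r\phi^{-1}(t))\big)\ge \Pp^x\big(\tau_{B(x,r\phi^{-1}(t))}\ge t\big)\ge \tfrac12.
\end{align*}

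Next, by the semigroup property (\autoref{p:on_upper}) and the symmetry of $p$,
\begin{align*}
p(2t,x,x)=\int_{\Rd} p(t,x,y)^2\,dy\ge \int_{B(x,r\phi^{-1}(t))}p(t,x,y)^2\,dy.
\end{align*}
Applying Cauchy--Schwarz to the indicator of $B:=B(x,r\phi^{-1}(t))$,
\begin{align*}
\bigg(\int_B p(t,x,y)\,dy\bigg)^{\!2}\le |B|\int_B p(t,x,y)^2\,dy,
\end{align*}
so combining with the previous estimate,
\begin{align*}
p(2t,x,x)\ge \frac{\big(\Pp^x(X_t\in B)\big)^2}{|B|}\ge \frac{1/4}{c_d[r\phi^{-1}(t)]^d}\ge \frac{c_1}{[\phi^{-1}(t)]^d}.
\end{align*}

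Finally, replacing $t$ by $t/2$ and using \textbf{(WS)} to absorb the factor relating $\phi^{-1}(t/2)$ and $\phi^{-1}(t)$ gives $p(t,x,x)\ge c/[\phi^{-1}(t)]^d$ as required. I do not expect any real obstacle here since all the ingredients are in place: \autoref{cor:exit_new} is already uniform in $x$, the joint continuity from \autoref{thm:Holder} justifies evaluating $p(t,x,x)$ pointwise, and the only minor technicality is verifying that the constant produced in \autoref{cor:exit_new} (with $T=1$) allows $r$ to be chosen large enough so that the exit probability is at most $1/2$; this is immediate from \textbf{(WS)} with $\underline\alpha>0$.
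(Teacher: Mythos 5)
Your proposal is correct, and the core argument is the one the paper uses: apply Chapman--Kolmogorov to write $p(t,x,x)=\int p(t/2,x,y)^2\,dy$, restrict the integral to a ball $B$ of radius comparable to $\phi^{-1}(t)$, and conclude by Cauchy--Schwarz once $\Pp^x(X_{t/2}\in B)\ge 1/2$ is secured from an exit-time estimate. The only substantive difference is which exit-time lemma you invoke. The paper derives the probability bound from \autoref{p:exit}, tuning the \emph{time} parameter $\varrho\,\phi(r)$; you use \autoref{cor:exit_new} and enlarge the \emph{radius} parameter $r$ instead, which is equivalent for this purpose. Your choice is in fact slightly cleaner in terms of dependencies: \autoref{cor:exit_new} follows from the scaled-process analysis in Section~2 (it only needs the on-diagonal bound and the truncated-process estimates), whereas \autoref{p:exit} is proved using the full off-diagonal upper bound of \autoref{thm:uphke}. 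So your route makes explicit that the on-diagonal lower bound does not actually need the off-diagonal upper bound. One small technical point: the inequality $\Pp^x(X_t\in B)\ge\Pp^x(\tau_B\ge t)$ is not literally correct --- on $\{\tau_B=t\}$ a right-continuous jump process exiting the open ball $B$ at time $t$ has $X_t\notin B$ --- so what one really wants is $\Pp^x(\tau_B\le t)\le 1/2$, which follows by applying \autoref{cor:exit_new} at a slightly larger time (or, equivalently, choosing $r$ a bit larger still and using $\Pp^x(\tau_B\le t)\le\Pp^x(\tau_B<2t)$ together with the nesting of balls). The paper elides the same point, so this is a harmless cosmetic gap rather than a flaw in the argument.
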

\begin{proof}
	By  \autoref{p:exit}, there exists ${\varrho}_0\in (0, 2^{-1})$ such that for any $x\in \Rd$ and $r>0$
	\begin{align*}
	%\label{eq:on1}
	\Pp^x(\tau_{B(x,r)}\le {\varrho}_0 \phi(r))\le \tfrac{1}{2}.
	\end{align*}
	For any $t>0$, let $r_1:=\phi^{-1}\left(\tfrac{t}{2{\varrho}_0}\right)$ and $r_2:=\phi^{-1}(t)$.
	Because $2{\varrho}_0\le 1$ and $\phi^{-1}$ is increasing, we note that $r_2\le r_1$. Therefore, {\bf (WS)} implies
	\begin{align*}
	\lC \left(\frac{r_1}{r_2}\right)^{\la}\le \frac{\phi(r_1)}{\phi(r_2)}=2{\varrho}_0 \le \uC \left(\frac{r_1}{r_2}\right)^{\ua},
	\end{align*}
	so that  $r_2\le r_1\le  c_0r_2$
	for some $c_0:=\left({2{ \varrho}_0}/{\lC}\right)^{1/\la}$.
	Since 
	\begin{align*}
	\int_{\Rd\backslash B(x, c_0\phi^{-1}(t))} p(t/2, x, y)dy
	\le 	\Pp^x(\tau_{B(x,c_0r_2)}\le \tfrac{t}{2})
	\le 	\Pp^x(\tau_{B(x,r_1)}\le{\varrho}_0 \phi(r_1))\le \tfrac{1}{2},
	\end{align*}
	we have that  
	\begin{align*}
	p(t, x, x)=&\int_\Rd p^2(t/2, x, y) dy\ge  \int_{B(x, c_0\phi^{-1}(t))} p^2(t/2, x, y)dy\\
	\ge & |B(x, c_0\phi^{-1}(t))|^{-1}\left(\int_{B(x, B(x, c_0\phi^{-1}(t)))}p(t/2, x, y)dy\right)^2\ge \frac{c_1}{[\phi^{-1}(t)]^{d}}.
	\end{align*}
\end{proof}

\medskip

Using the scaling method in \autoref{subsec:upall}, we then obtain near diagonal lower estimate. 
\begin{proposition}\label{p:lower_on}
	There exist positive constants $c_1, c_2$ such that 
	\begin{align*}
	p(t, x, y)\ge \frac{c_1}{[\phi^{-1}(t)]^{d}} \qquad\text{ for } \ t>0, \ |x-y|\le c_2 \phi^{-1}(t).
	\end{align*}
\end{proposition}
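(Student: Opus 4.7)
The plan is to combine the on-diagonal lower bound of \autoref{p:onlower} with the H\"older continuity provided by \autoref{thm:Holder}, but applied after rescaling so as to make all constants uniform in $t$. Fix $t>0$ and let $\kappa:=\phi^{-1}(t)$. By Subsection~\ref{subsec:upall}, the scaled process $Y^{(\kappa)}$ has transition density $q^{(\kappa)}(s,\tilde x,\tilde y)=\kappa^d p(\phi(\kappa)s,\kappa\tilde x,\kappa\tilde y)$, so in particular
\begin{align*}
q^{(\kappa)}(1,\tilde x,\tilde y)=\kappa^d\,p(t,\kappa\tilde x,\kappa\tilde y).
\end{align*}
The jump kernel $J^{(\kappa)}$ satisfies the ellipticity condition \eqref{eq:J-ellipticity-assum} relative to $\phi^{(\kappa)}(r)=\phi(\kappa r)/\phi(\kappa)$ with the \emph{same} constant $\Lambda$, and $\phi^{(\kappa)}$ obeys $\mathbf{(WS)}$ with the same constants $\lC,\uC,\la,\ua$; moreover $(\phi^{(\kappa)})^{-1}(1)=1$.

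Next, I would apply \autoref{p:onlower} to $Y^{(\kappa)}$. Since the constant there depends only on $\phi$ via its $\mathbf{(WS)}$ parameters, we obtain a positive constant $c_0$, independent of $\kappa$ and $\tilde x$, with
\begin{align*}
q^{(\kappa)}(1,\tilde x,\tilde x)\ge c_0\bigl[(\phi^{(\kappa)})^{-1}(1)\bigr]^{-d}=c_0\qquad\text{for all }\tilde x\in\Rd.
\end{align*}
Applying \autoref{thm:Holder} (together with \autoref{prop:UHolder}) to $q^{(\kappa)}(1,\cdot,\cdot)$ yields constants $c_1,\varsigma>0$, again depending only on $\Lambda$ and on $\phi$ through $\mathbf{(WS)}$, such that
\begin{align*}
\bigl|q^{(\kappa)}(1,\tilde x,\tilde x)-q^{(\kappa)}(1,\tilde x,\tilde y)\bigr|\le c_1|\tilde x-\tilde y|^{\varsigma}\qquad\text{for }|\tilde x-\tilde y|\le 1.
\end{align*}
Choosing $\delta\in(0,1)$ with $c_1\delta^{\varsigma}\le c_0/2$ gives $q^{(\kappa)}(1,\tilde x,\tilde y)\ge c_0/2$ whenever $|\tilde x-\tilde y|\le\delta$.

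Finally, I would scale back. Setting $\tilde x=x/\kappa,\ \tilde y=y/\kappa$, the condition $|\tilde x-\tilde y|\le\delta$ becomes $|x-y|\le\delta\phi^{-1}(t)$, and the bound $q^{(\kappa)}(1,\tilde x,\tilde y)\ge c_0/2$ translates to $p(t,x,y)\ge(c_0/2)[\phi^{-1}(t)]^{-d}$, giving the claim with $c_1=c_0/2$ and $c_2=\delta$.

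The main obstacle is verifying that the H\"older constants inherited from \autoref{thm:Holder}, \autoref{prop:UHolder}, and ultimately from \autoref{p:exit}, \autoref{prop:exptau}, \autoref{thm:support}, \autoref{prop:Ttao}, and \autoref{harmonicthm} are genuinely independent of $\kappa$. Since each of these results depends on $\phi$ only through the $\mathbf{(WS)}$ parameters $\lC,\uC,\la,\ua$ and on $\Lambda$, and $(\phi^{(\kappa)},J^{(\kappa)})$ preserves these parameters uniformly in $\kappa$, this reduces to careful bookkeeping rather than a new argument. An alternative would be to work directly with $p(t,\cdot,\cdot)$ without rescaling, but then the $t$-dependence of the H\"older constant (coming from $\|\widetilde h\|_\infty$ in the proof of \autoref{thm:Holder}) must be tracked by hand, which is less transparent than the scaling approach outlined above.
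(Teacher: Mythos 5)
Your proposal is correct and takes essentially the same route as the paper: rescale to $Y^{(\kappa)}$ with $\kappa=\phi^{-1}(t)$, invoke the on-diagonal lower bound of \autoref{p:onlower}, and then use the H\"older continuity argument of \autoref{thm:Holder}/\autoref{prop:UHolder} applied to $q^{(\kappa)}(1,\cdot,\cdot)$ (whose constants are uniform in $\kappa$ because $J^{(\kappa)}$ and $\phi^{(\kappa)}$ satisfy \eqref{eq:J-ellipticity-assum} and $\mathbf{(WS)}$ with the same constants) to propagate the bound off the diagonal before scaling back. The only cosmetic difference is that the paper applies \autoref{p:onlower} to $X$ and transfers via \eqref{eq:scail_cut} rather than applying it directly to $Y^{(\kappa)}$, but these are equivalent.
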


\begin{proof}
	Because of \eqref{eq:scail_cut}, it is enough to show that 
	\begin{align*}
	{q^{(\phi^{-1}(t))}}(1, x, y)\ge c_1 \qquad\qquad\text{ for } |x-y|\le c_2
	\end{align*}
	for some $c_1, c_2>0$.
	Let $\kappa:=\phi^{-1}(t)$. With the same discussion as in the proof of \autoref{thm:Holder}, 
	there exist $c, {\varrho}>0$ such that {for $\lambda>0$}
	\begin{align*}
	|{ q^{(\kappa)}}(1, x, z)-{q^{(\kappa)}}(1, y, z)|&=|Q^{(\kappa)}_{1/2} f(x)- Q^{(\kappa)}_{1/2} f(y)|\nn\\
	&=|U^{(\kappa)}_{\lambda} h(x)-U^{(\kappa)}_{\lambda} h(y)|\le c|x-y|^{\varsigma}.
	\end{align*}
	{ Here $f(z):=q^{(\kappa)}(1/2, z, y)$ and $h(z):=\int_0^\infty (\lambda +\mu)e^{-\mu t}dE_\mu^{(\kappa)} (f)$ where $\mu>0$ and $E_\mu^{(\kappa)}$ is the projection operator in $L^2(\Rd)$ related to $Q^{(\kappa)}$ and $U^{(\kappa)}$. }
	Therefore, we have our assertion by the symmetry of ${q^{(\kappa)}}(t, x, y)$ and  \autoref{p:onlower}.
\end{proof}

\medskip

We finally obtain the lower bound estimates of $p(t, x, y)$ for any $t>0$ and $x,y\in \Rd$ using the similar method as in \cite[Theorem 4.21]{Xu13}.

\begin{theorem}
	There exists a positive constant $c=c(\phi)$ such that for any $t>0,\ x, y\in \Rd$,
	\begin{align*}
	p(t, x, y)\ge c [\phi^{-1}(t)]^{-d}\prod_{i=1}^d\left(1\wedge \frac{t\phi^{-1}(t)}{|x^i-y^i|\phi(|x^i-y^i|)}\right).
	\end{align*}
\end{theorem}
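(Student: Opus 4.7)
The plan is to reproduce the classical chaining argument (as used in \cite{Xu13}, Theorem 4.21), with the two building blocks already in hand: the L\'evy system in \eqref{eq:LSd}, and the near-diagonal lower bound $p(t,x,x')\ge c[\phi^{-1}(t)]^{-d}$ for $|x-x'|\le c_2\phi^{-1}(t)$ (Proposition \ref{p:lower_on}). First I would reduce to the ``far'' regime. Split the coordinates as $I:=\{i : |x^i-y^i|\le c_2\phi^{-1}(t)\}$ and $J:=\{i : |x^i-y^i|> c_2\phi^{-1}(t)\}$. By \textbf{(WS)}, the factor $1\wedge \frac{t\phi^{-1}(t)}{|x^i-y^i|\phi(|x^i-y^i|)}$ is $\asymp 1$ for $i\in I$, so it suffices to show
\[
p(t,x,y)\,\ge\, c\,[\phi^{-1}(t)]^{-d}\prod_{i\in J}\frac{t\phi^{-1}(t)}{|x^i-y^i|\phi(|x^i-y^i|)}.
\]

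The key lemma is the single far coordinate case: if $x,y$ differ only in one coordinate $i_0$ with $|x^{i_0}-y^{i_0}|>c_2\phi^{-1}(t)$, then
\[
p(t,x,y)\,\ge\, \frac{c\,t\,\phi^{-1}(t)}{[\phi^{-1}(t)]^{d}\,|x^{i_0}-y^{i_0}|\phi(|x^{i_0}-y^{i_0}|)}.
\]
To prove this, use Chapman--Kolmogorov together with the near-diagonal bound to reduce to a lower bound on $\Pp^x(X_{t/2}\in B(y,r_s))$ with $r_s=c_0\phi^{-1}(t/2)$:
\[
p(t,x,y)\,\ge\, \int_{B(y,r_s)}p(t/2,x,z)p(t/2,z,y)dz \,\ge\, c[\phi^{-1}(t)]^{-d}\,\Pp^x(X_{t/2}\in B(y,r_s)).
\]
For the probability, set $\tau:=\tau_{B(x,r_s/2)}$, apply the L\'evy system \eqref{eq:LSd} to the event ``one jump along $e^{i_0}$ by time $t/4$ lands the process in $B(y,r_s/2)$''. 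The one-dimensional slice $\{h\in\R : x+(\text{small pert.})+he^{i_0}\in B(y,r_s/2)\}$ has 1D Lebesgue measure $\asymp\phi^{-1}(t)$, and $J(\cdot,\cdot+he^{i_0})\asymp|h|^{-1}\phi(|h|)^{-1}\asymp (|x^{i_0}-y^{i_0}|\phi(|x^{i_0}-y^{i_0}|))^{-1}$ uniformly in the relevant $h$. Combined with $\E^x[\tau\wedge (t/4)]\asymp t$ (Proposition \ref{prop:exptau}) and the exit time estimate (Proposition \ref{p:exit}) guaranteeing the process stays in $B(y,r_s)$ for the remaining time with probability $\ge 1/2$, this yields the desired lower bound.

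The general case then follows by iterating. Let $k:=|J|$ with coordinates $i_1,\ldots,i_k$, set $s:=t/(k+1)$, and construct intermediate points $u_0:=x,u_1,\ldots,u_k$ where $u_j$ agrees with $y$ in the coordinates $i_1,\ldots,i_j$ and with $x$ elsewhere. In particular $u_k$ differs from $y$ only in near coordinates, so $|u_k-y|\le\sqrt{d}\,c_2\phi^{-1}(t)$. Apply Chapman--Kolmogorov in $k+1$ steps and restrict the $j$-th intermediate variable to $B(u_j,r_s)$, $r_s:=c_2\phi^{-1}(s)$:
\[
p(t,x,y)\,\ge\,\int_{B(u_1,r_s)}\!\!\!\cdots\!\!\int_{B(u_k,r_s)} p(s,x,z_1)\prod_{j=2}^k p(s,z_{j-1},z_j)\,p(s,z_k,y)\,dz_1\cdots dz_k.
\]
For each factor $p(s,z_{j-1},z_j)$ apply (a perturbed version of) the single far-coordinate bound in direction $e^{i_j}$; the last factor $p(s,z_k,y)$ is bounded below by $c[\phi^{-1}(s)]^{-d}$ via the near-diagonal estimate, since $|z_k-y|\le r_s+\sqrt{d}\,c_2\phi^{-1}(t)\le C\phi^{-1}(s)$. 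Using that $|B(u_j,r_s)|\asymp[\phi^{-1}(s)]^d$ and $s\asymp t$, $\phi^{-1}(s)\asymp\phi^{-1}(t)$ (with constants depending only on $d$ and $\phi$ via \textbf{(WS)}), the product collapses to the claimed bound.

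The main obstacle I anticipate is the ``perturbed single-coordinate bound'' needed in the chain: $z_{j-1}$ and $z_j$ differ by a dominant jump $\asymp|x^{i_j}-y^{i_j}|e^{i_j}$ but also by small perturbations of order $r_s$ in the other coordinates, whereas the clean Step~2 bound was stated under an exact single-coordinate difference. One must check that the L\'evy system estimate is stable under such perturbations, which holds because the one-dimensional slice defining the valid jumps $h$ still has 1D measure $\asymp\phi^{-1}(s)$ and the jump intensity on that slice remains $\asymp|x^{i_j}-y^{i_j}|^{-1}\phi(|x^{i_j}-y^{i_j}|)^{-1}$ by \textbf{(WS)}. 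Once this uniformity is secured, the rest is bookkeeping with the weak scaling.
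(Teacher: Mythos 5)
Your proposal is correct and follows essentially the same route as the paper: a Chapman--Kolmogorov chaining argument, where each chain link uses the L\'evy system \eqref{eq:LSd} to bound from below the probability of a single dominant jump in one coordinate direction, the exit-time estimates (\autoref{p:exit}, \autoref{prop:exptau}) to keep the process in a small ball long enough to make that jump, and the near-diagonal lower bound (\autoref{p:lower_on}) for the final factor. The only cosmetic differences are that the paper works with shrinking cubes $Q_k = Q(\xi_{(k)}, r_k)$ and a fixed time $t$ per step (proving the bound for $p(d\cdot t, x, y)$ and rescaling), and it reduces to the case where every coordinate is far, whereas you split into near/far coordinates $I,J$ explicitly, use balls of a common radius, and take step length $s = t/(|J|+1)$; the ``perturbed single-coordinate'' uniformity you flag as the main obstacle is exactly what the paper secures by the geometric estimate $|w^{k+1}-u^{k+1}| \le 5|x^{k+1}-y^{k+1}|$ for $w\in Q(\xi_{(k)},2r_k)$, $u\in Q(\xi_{(k+1)},2r_k)$, together with \textbf{(WS)}.
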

\begin{proof}
	We first note that by  \autoref{p:exit},  there exists ${ \varrho}_0\in (0, 2^{-1})$ such that for any $w\in\Rd$ and $r>0$,
	\begin{align}\label{eq:gam}
	\Pp^w(\tau_{B(w, 4^{-d} r)}\ge  { \varrho}_0 \phi(r))\ge 1/2.
	\end{align}
	Let ${\varrho}:=2{\varrho}_0\in (0, 1)$. By  \autoref{p:lower_on}, it remains to show that there exists $c_1>0$ such that for any $t>0$ and $x, y\in \Rd$ satisfying $|x^i-y^i|\ge  \phi^{-1}({\varrho}^{-1}t)$ for each $i\in \{1, \ldots, d\}$,
	\begin{align*}
	p(t, x, y)\ge c_1 [\phi^{-1}(t)]^{-d}\prod_{i=1}^d \frac{t\phi^{-1}(t)}{|x^i-y^i|\phi(|x^i-y^i|)}.
	\end{align*}
	Consider points $\xi_{(0)},\xi_{(1)},\ldots, \xi_{(d)}\in \Rd$ tracing from $x$ to $y$ where the only difference of $\xi_{(i-1)}$ and $\xi_{(i)}$ is the value of $i$-th coordinate, that is,
	\begin{align*}
	\xi_{(0)}=x, \ \xi_{(d)}=y\,\ \text{ and }
	\xi_{(k)}:=(\xi_{(k)}^1, \ldots, \xi_{(k)}^d)\,\ \text{ where } 
	\begin{cases}
	\xi_{(k)}^i=y^i\,\,\,\text{if } i\le k,\\
	\xi_{(k)}^i=x^i\,\,\,\text{if } i> k.
	\end{cases}
	\end{align*}
	For each $k=1, 2, \ldots, d$, consider cubes $Q_{k}:=Q(\xi_{(k)}, r_k)$ centered at $\xi_{(k)}$ with side length $r_k:= \phi^{-1}({ \varrho}^{-1}t)/4^{d-k}$.
	Since $p(t, w, y)\ge c_0 [\phi^{-1}(t)]^{-d}$ for $w\in Q_d$ by \autoref{p:lower_on},
	the semigroup property implies that 
	\begin{align}\label{eq:up_semi}
	p(d\cdot t, x, y)&
	=\int_{\Rd}\cdots\int_{\Rd}p(t, x, \eta_{(1)})\,p(t, \eta_{(1)}, \eta_{(2)})\cdots p(t,\eta_{(d)}, y)\ d\eta_{(1)}\cdots d\eta_{(d)}\nn\\
	&\ge \int_{Q_d}\cdots\int_{Q_1}p(t, x, \eta_{(1)})\,p(t, \eta_{(1)}, \eta_{(2)})\cdots p(t,\eta_{(d)}, y)\ d\eta_{(1)}\cdots d\eta_{(d)}\nn\\
	&\ge   c_2 [\phi^{-1}(t)]^{-d}\Pp^x(X_t\in Q_1)\prod_{k=1}^{d-1} \inf_{\eta_{(k)}\in Q_k}\Pp^{\eta_{(k)}} (X_t\in Q_{k+1}).
	\end{align}
	Let $ \widetilde r:=\phi^{-1}({\varrho}^{-1}t)$.
	Then $r_k/2\ge 4^{-d}\widetilde r$, and
	\eqref{eq:gam} with the fact that ${ \varrho}_0={\varrho}/2$ implies that for any $w\in \Rd$,
	\begin{align}\label{eq:ex1}
	\Pp^w(\tau_{B(w, r_k/2 )}\ge  t/2)
	&\ge \Pp^w(\tau_{B(w, 4^{-d} \widetilde r)}\ge  t/2)\nn\\
	&=\Pp^w(\tau_{B(w, 4^{-d} \widetilde r)}\ge  {\varrho}_0 \phi( \widetilde r))\ge 1/2.
	\end{align}
	Let $\overline{Q}_k:=Q(\xi_{(k)}, r_k/2)$.
	For any $t>0$, define $\mathcal M_t^{(k)}:=\{\omega: X_t(\omega) \text{ hits } \overline{Q}_k \text{ by } t\}$.
	% and $\mathcal{N}_t^{(k)}:=\{\omega:\sup_{s\le t}|X_s(\omega)-X_0(\omega)|\le r_k/2\}$.
	The strong Markov property with \eqref{eq:ex1} and L\'evy system in \eqref{eq:LSd} imply that for any $\eta\in Q_k$,
	\begin{align}\label{eq:1}
	\Pp^{\eta} (X_t\in Q_{k+1})
	&\ge 2^{-1}\Pp^{\eta}\big(\cM_{t/2}^{(k+1)}\big)\nn\\
	&\ge c_3\ \E^{\eta} \Big[\int^{t/2\wedge \tau_{Q(\eta, r_k)}}_0\int_{\overline Q_{k+1}} \frac{1}{|X_s-u|\phi(|X_s-u|)} m(du)ds\Big]
	\end{align}
	where $m(du)$ is the measure on $\sum_{i=1}^{d}\R$ restricted only on each coordinate.
	Let $k\in\{1,\ldots, d-1\}$.
	For $w\in Q(\xi_{(k)}, 2r_k)$ and $u\in Q(\xi_{(k+1)}, 2r_k)$,
	since $\xi_{(k)}^{k+1}=x^{k+1}$, $\xi_{(k+1)}^{k+1}=y^{k+1}$ and $|x^{k+1}-y^{k+1}|\ge r_k$, we have that
	\begin{align*}
	%\label{eq:dis}
	|w^{k+1}-u^{k+1}|&\le |x^{k+1}-y^{k+1}|+|x^{k+1}-w^{k+1}|+|y^{k+1}-u^{k+1}|\nn\\
	&= |x^{k+1}-y^{k+1}|+|\xi_{(k)}^{k+1}-w^{k+1}|+|\xi_{(k+1)}^{k+1}-u^{k+1}|\le 5 |x^{k+1}-y^{k+1}|.
	\end{align*}
	Also \eqref{eq:ex1} implies 
	\begin{align}\label{eq:e2}
	\E^\eta\left[ \frac{t}{2}\wedge \tau_{Q(\eta, r_k)}\right]\ge t\Pp^{\eta}\left(\tau_{Q(\eta, r_k)}> \frac{t}{2}\right)\ge 2^{-1} t.
	\end{align}
	Therefore,  \eqref{eq:1}--\eqref{eq:e2} with {\bf (WS)} and the fact that 
	${\phi^{-1}({ \varrho}^{-1}t)}/{\phi^{-1}(t)}\asymp 1$ imply that for any $\eta\in Q_k$ and $k\in \{1, \ldots, d-1\}$,
	\begin{align}\label{eq:upk}
	\Pp^{\eta} (X_t\in Q_{k+1})&\ge c_4\ \E^\eta\left[ \frac{t}{2}\wedge \tau_{Q(\eta, r_k)}\right]
	\frac{\phi^{-1}(t)}{|x^{k+1}-y^{k+1}|\phi(|x^{k+1}-y^{k+1}|)}\nn\\
	&\ge \frac{c_4}{2}\frac{t\phi^{-1}(t)}{|x^{k+1}-y^{k+1}|\phi(|x^{k+1}-y^{k+1}|)}.
	\end{align}
	In a similar way to obtain \eqref{eq:upk}, we have that
	\begin{align}\label{eq:up1}
	\Pp^{x} (X_t\in Q_{1})\ge c_5\frac{t\phi^{-1}(t)}{|x^{1}-y^{1}|\phi(|x^{1}-y^{1}|)}.
	\end{align}
	Hence we have the lower bound for $p(t, x, y)$ by plugging \eqref{eq:upk} and \eqref{eq:up1} into \eqref{eq:up_semi}.
\end{proof}

%%%%%%%%%%%%%%%%%%%%%%%

\section{Appendix: Proof of \autoref{prop:main}}\label{sec:proof-propo}
In this section, we present the proof of \autoref{prop:main}. 
For the convenience of notations, we use constants $c$ instead of $c_i$, $i=1,2,\ldots$  in the proofs even the values are changed.
The main idea in this section follows from \cite[Section 4]{KKKpre}.

\subsection{Preliminary estimates} 
We first give the definition of $D_k \subset \R^d$ as in \cite{KKKpre}.
\begin{definition}\label{d:D}{\ }
\begin{itemize}
		\item[(0)] Define $D_0 =  \bigcup_{i=1,\ldots, d} \{ |x^i| < 1\} \cup (-2,2)^d $.
		\item[(1)] Given $k\in \N, \gamma:=(\gamma^1,\cdots,\gamma^d) \in \N_0^d$ with $\sum_{i=1}^d\gamma^i = k$ and $\switch \in \{-1,1\}^d \,$,
		define a box (hyper-rectangle) $D_k^{\gamma, \switch}$ by
		\begin{align*}
		D_k^{\gamma, \switch} = \switch^1 [2^{\gamma^1},2^{\gamma^1+1}) \times 
		\switch^2 [2^{\gamma^2},2^{\gamma^2+1}) \times \ldots \times 
		\switch^d [2^{\gamma^d},2^{\gamma^d+1}) \,. 
		\end{align*}
		\item[(2)] Given $k\in \N$ and $\gamma \in \N_0^d$ with $\sum_{i=1}^d \gamma^i = k$, 
		define
		\begin{align*}
		D_k^{\gamma} = 
		\bigcup_{\switch \in \{-1,1\}^d}  D_k^{\gamma, \switch}	\,. 
		\end{align*}
		\item[(3)] Given $k\in \N$, define
		\begin{align*}
		D_k = \bigcup_{\gamma \in \N_0^d:\, \sum_{i=1}^d \gamma^i = k}  D_k^{\gamma}  \,. 
		\end{align*}
	\end{itemize}
\end{definition}

Then we have the following Remark by \cite[Lemma 4.2]{KKKpre}.

\begin{remark}{\ }\label{rem:D}	Let $k \in \N_0,  \gamma \in \N_0^d$ and $\switch \in \{-1,1\}^d \,$.
	\begin{itemize}
		\item[(1)] Given $k\in \N, \gamma$ with $\sum_{i=1}^d \gamma^i = k$, there are $2^d$ sets of the form $D_k^{\gamma, \switch}$, 
		and $|D_k^{\gamma, \switch}|= \prod_{i=1}^d 2^{\gamma^i} = 2^{k}$. 
		\item[(2)] Given $k \in \N, \switch \in \{-1,1\}^d$, there are 
		$\big( \begin{array}{c} d+k-1 \\ d-1 \end{array} \big)$ sets $D_k^{\gamma, \switch}$ with $\sum_{i=1}^d \gamma^i = k$. Thus, the set $D_k$ consists of $2^d \big(\begin{array}{c} d+k-1 \\ d-1 \end{array} \big)$ disjoint boxes. 
		\item[(3)] $D_k \cap D_l = \emptyset$ if $k \ne l$ and 
		$\bigcup_{k \in \N_0} D_k = \R^d$.
	\end{itemize}
\end{remark}

Now we give the definitions of the shifted boxes centered at $y_0$.
For $y_0\in \R^d$, $t>0$ and $\kappa=\phi^{-1}(t)$, 
let $A_0:=y_0+\kappa D_0$. For $k \in \N, \gamma \in \N_0^d$ with $\sum_{i=1}^d \gamma^i = k$ and $\switch \in \{-1,1\}^d$, 
\begin{align*}
A_{k, \gamma, \switch}:=y_0+\kappa D_k^{\gamma, \switch} \,, \quad
A_{k, \gamma}:= y_0+\kappa D_k^{\gamma}\quad\text{  and }\quad
A_k := y_0+\kappa D_k. \end{align*}
By the definition of $D_k$, 
it is easy to see that $A_{k}\cap A_{l}=\emptyset$ for $k\neq l$ and $\cup_{k=0}^{\infty}A_k =\R^d$.
\medskip

For the rest of  \autoref{sec:proof-propo}, we assume $l\in \{0,1,\ldots, d-1\}$, $i_0 \in \{1, \dots, d-l\}$ and assume $x_0, y_0 \in \R^d$  satisfy the condition $\mathcal{R}(i_0)$ for some $i_0$ (see, \autoref{def:theta_and_R}). 
For $t>0$, set $\kappa = \phi^{-1}(t)$.
Then there exist $\lengthR_i\ge 1$, $i\in\{i_0, \ldots, d\}$ such that 
\begin{align*}
\tfrac{5}{4}2^{\lengthR_i}\kappa\le  |x_0^{i}-y_0^{i}| < \tfrac{10}{4}2^{\lengthR_i}\kappa\,\qquad\mbox{and}\qquad	R_i=2^{\lengthR_i}\kappa\,.
\end{align*}

Since the proofs of the following results reveal the same geometric structures as shown in \cite[Lemma 4.3, Remark 4.4, Lemma 4.5, Remark 4.6]{KKKpre}, we omit them.
\begin{lemma}\label{lem:jAn} Assume $x_0, y_0 \in \R^d$ satisfy condition $\mathcal{R}(i_0)$ for some $i_0$.
	Set $s(j_0):= \frac{R_{j_0}}{8}$ for  $j_0\in \{i_0,\ldots, d\}$. Then the following holds true:
	\begin{align*}
	&	\bigcup\limits_{u \in B(x_0,s(j_0))} \{ u +  h {e^i} | \, h \in \R\}\subset y_0+ \big(\bigotimes_{j=1}^{i-1}\mathcal{J}_{\lengthR_{j_0}}\times \R\times\bigotimes_{j=i+1}^{j_0-1}\mathcal{J}_{\lengthR_{j_0}}\times\bigotimes_{j=j_0}^{d} \mathcal{I}_{\lengthR_j}
	\big) 
	&\mbox{ if }i< j_0\,,\\
	&	\bigcup\limits_{u \in B(x_0,s(j_0))} \{ u + h e^i | \, h \in \R\}\subset y_0+ \big(\bigotimes_{j=1}^{j_0-1}\mathcal{J}_{\lengthR_{j_0}}\times\bigotimes_{j=j_0}^{i-1} \mathcal{I}_{\lengthR_j}
	\times\R\times\bigotimes_{j=i+1}^{d} \mathcal{I}_{\lengthR_j}
	\big)
	&\mbox{ if }i\ge j_0\,,
	\end{align*}
	where $\mathcal{I}_{\lengthR_j}:= \pm[2^{\lengthR_j}\kappa, 2^{\lengthR_j+2}\kappa)$, $\mathcal{J}_{\lengthR_{j_0}}:= \pm[0, 2^{\lengthR_{j_0}+2}\kappa)$ and $\lengthR_{j_0}, \ldots, \lengthR_{d} \in \N$. 
\end{lemma}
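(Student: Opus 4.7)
The plan is to parametrize a generic point on the line $\{u+he^i : h\in \R\}$ with $u\in B(x_0, s(j_0))$, shift by $-y_0$, and verify coordinate-by-coordinate that the resulting vector lies in the claimed product set. The $i$-th coordinate becomes $u^i+h-y_0^i$, which sweeps out all of $\R$ as $h$ varies; this matches the $\R$-factor in both displayed inclusions. So the whole content of the lemma reduces to bounding $|u^k-y_0^k|$ for the remaining coordinates $k\ne i$, and the split into the two cases $i<j_0$ and $i\ge j_0$ only concerns which positions in the product are labeled $\mathcal{J}$ versus $\mathcal{I}$.

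The inputs I would use are: (a) $|u^k-x_0^k|\le|u-x_0|<s(j_0)=2^{\lengthR_{j_0}}\kappa/8$ for every $k$; (b) under $\mathcal{R}(i_0)$, $|x_0^k-y_0^k|<\tfrac{10}{4}\kappa\le \tfrac{10}{4}2^{\lengthR_{j_0}}\kappa$ for $k<i_0$ (since $\lengthR_{j_0}\ge 1$), while $\tfrac{5}{4}2^{\lengthR_k}\kappa\le |x_0^k-y_0^k|<\tfrac{10}{4}2^{\lengthR_k}\kappa$ for $k\ge i_0$; (c) the monotonicity $\lengthR_1\le\cdots\le\lengthR_d$ provided by \autoref{def:theta_and_R}.

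For an "inner" coordinate $k<j_0$, $k\ne i$, a single triangle inequality suffices: combining (a) and (b) with $\lengthR_k\le \lengthR_{j_0}$ yields $|u^k-y_0^k|\le \tfrac{1}{8}2^{\lengthR_{j_0}}\kappa+\tfrac{10}{4}2^{\lengthR_{j_0}}\kappa=\tfrac{21}{8}2^{\lengthR_{j_0}}\kappa<2^{\lengthR_{j_0}+2}\kappa$, so $u^k-y_0^k\in\mathcal{J}_{\lengthR_{j_0}}$ (both signs are allowed in $\mathcal{J}$, so the absolute-value bound is all that is needed). For an "outer" coordinate $k\ge j_0$, $k\ne i$, I would produce a two-sided estimate: using $\lengthR_{j_0}\le \lengthR_k$, the upper bound is $|u^k-y_0^k|\le \tfrac{1}{8}2^{\lengthR_k}\kappa+\tfrac{10}{4}2^{\lengthR_k}\kappa<2^{\lengthR_k+2}\kappa$, and the lower bound is $|u^k-y_0^k|\ge \tfrac{5}{4}2^{\lengthR_k}\kappa-\tfrac{1}{8}2^{\lengthR_k}\kappa=\tfrac{9}{8}2^{\lengthR_k}\kappa\ge 2^{\lengthR_k}\kappa$; since $\mathcal{I}_{\lengthR_k}$ allows either sign, this places $u^k-y_0^k\in \mathcal{I}_{\lengthR_k}$.

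The argument is genuinely routine once the scale $s(j_0)=R_{j_0}/8$ is chosen correctly; in fact the choice of the factor $1/8$ is calibrated precisely so that the ball radius is negligible compared both to $\tfrac{5}{4}R_k$ from below and to $\tfrac{10}{4}R_k$ from above in the outer directions. The only mild point to watch is the case $k<i_0$, where $|x_0^k-y_0^k|$ has no lower bound of the form $2^{\lengthR_k}\kappa$; but such $k$ are always "inner" ($k<i_0\le j_0$), so only the upper-bound step is needed, and condition (b) handles it by the factor $2^{\lengthR_{j_0}}\ge 2$. I do not expect any substantive obstacle: the entire proof is three triangle inequalities organized according to the position of $k$ relative to $i_0$ and $j_0$.
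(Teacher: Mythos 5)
Your proof is correct, and it is exactly the coordinate-wise triangle-inequality argument the paper has in mind: the paper omits the proof and cites \cite[Lemma 4.3]{KKKpre}, whose proof proceeds in the same way (separating $i$-th coordinate from the rest, bounding inner coordinates $k<j_0$ one-sidedly to land in $\mathcal{J}_{\lengthR_{j_0}}$, and outer coordinates $k\ge j_0$ two-sidedly to land in $\mathcal{I}_{\lengthR_k}$, using $s(j_0)=2^{\lengthR_{j_0}-3}\kappa$ so that $\tfrac{21}{8}<4$ and $\tfrac{9}{8}\ge 1$). Your treatment of the coordinates $k<i_0$ (where only the upper bound $|x_0^k-y_0^k|<\tfrac{10}{4}\kappa\le\tfrac{10}{4}2^{\lengthR_{j_0}}\kappa$ is available and needed) is the right observation to close the argument.
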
	

\medskip

For $j_0\in \{i_0, \ldots, d\}$, define
\begin{align*}
{ A_k^i}: = A_k \cap  \bigcup\limits_{u \in B(x_0,s(j_0))} \{ u +  h e^i | \,  h \in \R\} \qquad \text{ for $k \in \N_0, i \in \{1,\ldots, d\}$},
\end{align*}
that is, ${A_k^i}$ contains all possible points in $A_k$ 
when the process $X$ leaves the ball $B(x_0,s(j_0))$ by a jump in the $i$-th direction.

\medskip

\begin{remark}\label{rem:nervig}
	Using the above notations together with \autoref{lem:jAn}, the following observation holds true:
	\begin{align*}
	{A_k^i} \ne \emptyset \qquad  &\Longrightarrow 
	\begin{cases}
	k= 0\,\mbox{ or }\, k \geq \sum\limits_{j\in\{j_0,\ldots,d\}} \lengthR_j \quad &\text{ if } i< j_0\,, \\
	k= 0\,\mbox{ or }\, k \geq 
	\sum\limits_{j\in\{j_0,\ldots,d\}\backslash\{i\} }\lengthR_j &\text{ if } i\ge j_0 \,. 
	\end{cases}
	\end{align*}
\end{remark}

\begin{lemma}\label{lem:zyi}
	Let $i\in\{1, \ldots, d\}$ and $k\in\N_0$.
	For any $z\in {A_k^i}$ and $y\in B(y_0, \tfrac{\kappa}{8})$, there exists
	$\gamma\in \N_0^d$ such that
	\begin{align}\label{eq:zyjg}
	|z^j-y^j|\in [a_4 2^{\gamma^j}\kappa,  a_52^{\gamma^j+1}\kappa)
	\qquad	\mbox{ if } k\in \N\ \text{and } j\in \{1, \ldots, d\},
	\end{align}
	for some $ a_4, a_5>0$.
	Moreover,  for $z\in {A_k^i}$ 
	and $y\in B(y_0, \tfrac{\kappa}{8})$, the following holds true:
	\begin{alignat}{2}
	&|z^j-y^j|\in [0,  a_5 2^{\lengthR_{j_0}}\kappa) 
	\qquad &&\mbox{ if } j\in \{1, \ldots, j_0-1\}\backslash\{i\},\label{eq:zyj0}\\
	&|z^j-y^j|\in [a_4 2^{\lengthR_j}\kappa,  a_5 2^{\lengthR_j+1}\kappa)	\quad &&\mbox{ if } j\in \{ j_0\ldots, d\}\backslash\{i\},\qquad\label{eq:zyj}
	\end{alignat}
	for some $ a_4, a_5>0$.
\end{lemma}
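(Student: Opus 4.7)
The plan is to establish all three displays coordinate‑by‑coordinate via the triangle inequality, exploiting two distinct structural facts: the dyadic box decomposition of $A_k = y_0 + \kappa D_k$ (which controls $|z^j - y_0^j|$ up to a factor of $2$), and the directional constraint built into the definition of $A_k^i$ (which controls $|z^j - x_0^j|$ for $j\ne i$).

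For \eqref{eq:zyjg}, I would start from the disjoint decomposition $A_k = \bigsqcup_{\gamma,\epsilon} A_{k,\gamma,\epsilon}$ recalled in \autoref{rem:D}. Since $z\in A_k^i\subset A_k$, there exists a unique pair $(\gamma,\epsilon)$ with $\sum_j \gamma^j = k$ and $z - y_0 \in \kappa D_k^{\gamma,\epsilon}$, giving $|z^j - y_0^j| \in [2^{\gamma^j}\kappa,\, 2^{\gamma^j+1}\kappa)$ for every $j$. Since $y \in B(y_0,\kappa/8)$ forces $|y^j - y_0^j| \le \kappa/8 \le 2^{\gamma^j}\kappa/8$, the triangle inequality immediately produces $|z^j - y^j| \in [\tfrac{7}{8}2^{\gamma^j}\kappa,\, \tfrac{17}{16}2^{\gamma^j+1}\kappa)$, which is \eqref{eq:zyjg} for suitable universal $a_4,a_5$.

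For \eqref{eq:zyj0} and \eqref{eq:zyj}, I would use that membership $z \in A_k^i$ means $z = u + h e^i$ for some $u \in B(x_0,s(j_0))$ and $h\in\R$. Hence for every $j\ne i$ we have $z^j = u^j$, and therefore $|z^j - x_0^j| \le |u - x_0| \le s(j_0) = 2^{\lengthR_{j_0}}\kappa/8$. Combined with $|y^j - y_0^j| \le \kappa/8$, the proof reduces to controlling $|x_0^j - y_0^j|$ via \autoref{def:theta_and_R}. For $j \in \{j_0,\ldots,d\}\setminus\{i\}$ we have $\lengthR_j \ge \lengthR_{j_0} \ge 1$ and $\tfrac{5}{4}2^{\lengthR_j}\kappa \le |x_0^j - y_0^j| < \tfrac{10}{4}2^{\lengthR_j}\kappa$; the triangle inequality then yields a lower bound $\ge (\tfrac{5}{4} - \tfrac{1}{8} - \tfrac{1}{8})2^{\lengthR_j}\kappa = 2^{\lengthR_j}\kappa$ and an upper bound $\le \tfrac{11}{4}2^{\lengthR_j}\kappa < 2\cdot 2^{\lengthR_j+1}\kappa$, which gives \eqref{eq:zyj} with $a_4 = 1$ and, say, $a_5 = 2$. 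For $j \in \{1,\ldots,j_0-1\}\setminus\{i\}$, condition $\mathcal{R}(i_0)$ gives $\lengthR_j \le \lengthR_{j_0}$ (with $\lengthR_j\le 0$ when $j < i_0$, in which case $2^{\lengthR_j}\le 1 \le 2^{\lengthR_{j_0}-1}$), so in any case $|x_0^j-y_0^j| < \tfrac{10}{4}2^{\lengthR_j}\kappa \lesssim 2^{\lengthR_{j_0}}\kappa$; no lower bound is asserted, and the triangle inequality produces only the upper bound $|z^j - y^j| \le a_5 2^{\lengthR_{j_0}}\kappa$ required by \eqref{eq:zyj0}.

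The argument presents no real obstacle; the work is purely bookkeeping. The one subtlety is the $j < i_0$ sub‑case of \eqref{eq:zyj0}, where $\lengthR_j$ can be non‑positive so one must absorb the crude bound $|x_0^j - y_0^j| < \tfrac{10}{4}\kappa$ into a bound of the form $a_5 2^{\lengthR_{j_0}}\kappa$ by invoking $\lengthR_{j_0}\ge 1$. Once this normalization is fixed, the constants $a_4, a_5$ can be chosen uniformly across all indices $i, j, k, \gamma$, which is what the statement requires.
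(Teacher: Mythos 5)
Your proposal is correct. The paper itself omits the proof, citing \cite{KKKpre} and stating that the ``geometric structures'' are the same as there; your argument --- locating the unique box $A_{k,\gamma,\epsilon}$ containing $z$ to control $|z^j - y_0^j|$ for the first display, and using the tube constraint $z^j = u^j$, $u\in B(x_0,s(j_0))$, together with condition $\mathcal{R}(i_0)$ and $\lengthR_{j_0}\ge 1$ for the other two --- is exactly the bookkeeping that the cited lemmas carry out, so this is the expected route rather than a genuinely different one. The only thing worth tightening: in the $j<i_0$ sub-case of \eqref{eq:zyj0} you invoke $\lengthR_j$, which is defined in \autoref{def:theta_and_R} only when $|x_0^j - y_0^j|>0$; it is cleaner to note directly that for such $j$ one has $|x_0^j-y_0^j|<\tfrac{10}{4}\kappa$ (either because $\lengthR_j\le 0$ or trivially when $|x_0^j-y_0^j|=0$) and then absorb $\tfrac{10}{4}\kappa \le \tfrac{10}{4}\cdot 2^{\lengthR_{j_0}-1}\kappa$ via $\lengthR_{j_0}\ge 1$, which is what you do anyway.
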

\begin{remark}\label{rem:add10-4}
	Given ${A_k^i}$ and $B(y_0, \tfrac{\kappa}{8})$, 
	$\gamma\in \N_0^d$ can be chosen independently of $z\in {A_k^i}$ and $y\in B(y_0, \tfrac{\kappa}{8})$.
\end{remark}

\medskip

For $s(j_0)=\frac{R_{j_0}}{8}$, $x\in B(x_0, \frac{\kappa}{8})$ and $\tau=\tau_{B(x_0, s(j_0))}$, let
\begin{align*}
\Psi(k)&:=\E^{x}\left[\1_{\{\tau\le t/2\}}\1_{\{X_{\tau}\in A_k\}}P_{t-\tau}f(X_{\tau})\right],\,\,\qquad 
k\in \N_0,\\
\Psi^i(k)&:=\E^{x}\left[\1_{\{\tau\le t/2\}}\1_{\{X_{\tau}\in { A_k^i}\}}P_{t-\tau}f(X_{\tau})\right],\,\qquad k\in \N,\,\, i\in \{1,2,\ldots, d\}\,.\
\end{align*}
Let $f$ be a non-negative Borel function on $\Rd$ supported in $B(y_0, \kappa/8)$. Then
using  $\Psi, \Psi^i$, we decompose the left-hand side of \eqref{eq:main1} according to the following: 
\begin{align*}
\E^{x}\left[\1_{\{\tau\le t/2\}}P_{t-\tau}f(X_{\tau})\right]
&=\sum_{k=0}^{\infty}\Psi(k)
=\sum_{k=1}^{\infty}\Big(\sum_{i=1}^{d}\Psi^i(k)\Big)+\Psi(0)\nn\\
&= \sum_{k=1}^{\infty}\Psi^d(k) + \sum_{k=1}^{\infty}\Psi^{d-1}(k)  +\ldots+   \sum_{k=1}^{\infty}\Psi^1(k)  + \Psi(0).
\end{align*}
Moreover,  using the notation 
$\TT{a}{b}:=\Tj{a}{b}$ for $a\le b$ defined in \autoref{sec:uphke},
\autoref{rem:nervig} yields that the above decomposition can be separated as follows:
\begin{align} \label{eq:main}
\begin{split}
&\E^{x}\left[\1_{\{\tau\le t/2\}}P_{t-\tau}f(X_{\tau})\right] \\
&=  \sum_{k=\TT{j_0}{d}-\lengthR_d}^{\infty}\Psi^d(k) + \sum_{k=\TT{j_0}{d}-\lengthR_{d-1}}^{\infty}\Psi^{d-1}(k) + \ldots + \sum_{k=\TT{j_0}{d}-\lengthR_{j_0}}^{\infty}\Psi^{j_0}(k)\\
& \qquad +\sum_{k=\TT{j_0}{d}}^{\infty}\big(\Psi^{j_0-1}(k)+\ldots+\Psi^1(k)\big)+\Psi(0) \\
&=
\sum_{i={j_0}}^d\mathcal{S}(i)+\sum_{i={1}}^{j_0-1}\mathcal{T}(i)+\Psi(0),
\end{split}
\end{align}
where $\mathcal{S}(i):=\sum_{k=\TT{j_0}{d}-\lengthR_i}^{\infty}\Psi^i(k)$
and $\mathcal{T}(i):=\sum_{k=\TT{j_0}{d}}^{\infty}\Psi^i(k)$.

\medskip

Recall from \eqref{eq:newmm} and \eqref{eq:newmm2} that ${\fN}(\delta):=\frac{\phi(\kappa)}{2^{\delta}\, \phi(2^{\delta}\kappa)}\text{ for } \delta\in \Z$, and there is $c>0$ such that
\begin{align*}
c^{-1}2^{-\delta(\ua+1)}\le \ & \fN(\delta)\le c2^{-\delta(\la+1)}\ \qquad\text{ for }\delta\in  \Z_+.
\end{align*}

\begin{remark}\label{rem:prel}
Here we state some useful results for obtaining the upper bounds of $\Psi(0), \Psi^i(k)$.
Let $x_0, y_0 \in \R^d$ satisfy the condition $\mathcal{R}(i_0)$ for some $i_0$, so that $|x_0^j-y_0^j|\ge \frac54 R_j =\frac54 2^{\lengthR_j }\kappa$ for $j\in \{i_0, \ldots, d\}$.
For $l\in \{0,\ldots, d-1\}$, let $j_0\in\{i_0,\ldots,d-l\}$, and $s(j_0):=\frac{R_{j_0}}{8}$ and $\tau:=\tau_{B(x_0, s(j_0))}$. 	\begin{itemize}
\item [(1)]
 For any $w\in B(x_0, s(j_0))$ and $z\in \Rd$  satisfying $|z^i-y_0^i|\in [2^{\gamma^i}\kappa, 2^{\gamma^i+1}\kappa)$ with $\gamma^i+1\le \lengthR_i$ or  satisfying $|z^i-y_0^i| \in [0,  2\kappa) \cup [2^{ m-1}\kappa, 2^{m+1}\kappa)$ for some $m+1\le \lengthR_i$  for $i\ge j_0$,
\begin{align*} 
	|w^i-z^i|&\ge |x_0^i-y_0^i|-|w^i-x_0^i|-|z^i-y_0^i|\ge 5R_i/4-R_i/8-2^{\lengthR_i}\kappa= R_i/8 \,.
\end{align*}
	For $k\ge 1$, let $I_k^i:=\{\ell \in\R:|\ell - y_0^i|\in [2^{\gamma^i}\kappa, 2^{\gamma^i+1}\kappa),  \gamma^i+1\le \lengthR_i\}$.
	Then for any $x\in B(x_0, \tfrac{\kappa}{8})$ and $i\ge {j_0}$, {\bf(WS)} implies that 
	\begin{align}\label{eq:LSIg}
	\E^x\left[\int_{0}^{t/2\wedge \tau}\int_{I_k^i}
	\frac{1}{|X_s^i-\ell |\phi(|X_s^i-\ell |)} d \ell d s \right]
	\le \frac{c t}{R_i\phi(R_i)}\cdot
	2^{\gamma^i}\kappa= c 2^{\gamma^i}{\fN}(\lengthR_i).
	\end{align}
	For $k=0$, let $ I^i_{0, 0}:=\{\ell \in \R:|\ell - y_0^i| \in [0, 2\kappa)\}$ and $ I^i_{ 0, m}:=\{\ell \in \R:|\ell - y_0^i| \in [2^{m-1}\kappa, 2^{m+1}\kappa)\}$, for some ${m\in \{1, \ldots, \lengthR_i-1\}}$. Then for any $x\in B(x_0, \tfrac{\kappa}{8})$ and $i\ge {j_0}$, {\bf(WS)} implies that 
	\begin{align}\label{eq:LSIg0}
	\E^x\left[\int_{0}^{t/2\wedge \tau}\int_{ I^i_{0, m}}
	\frac{1}{|X_s^i-\ell |\phi(|X_s^i-\ell |)} d \ell d s \right]
	\le \frac{c t}{R_i\phi(R_i)}\cdot
	{2^{m}\kappa}  = c 2^{m}{\fN}(\lengthR_i)
	\end{align}
	for some $m\in \{0, 1, \ldots, \lengthR_i-1\}$.

\item[(2)]
{Let $x\in B(x_0,\tfrac{\kappa}{8})$. For any 	$y\in B(x,  s(j_0)/2 )$, since $s(j_0)=2^{n_{j_0}-3}\kappa\ge 2^{-2}\kappa$,
$$|y-x_0|\le |y-x|+|x-x_0|\le (2^{n_{j_0}-4}+8^{-1})\kappa \le 2^{n_{j_0}-3}\kappa$$ so that 
	$B(x, s(j_0)/2)\subset B(x_0, s(j_0))$. Therefore, by  \autoref{cor:exit_new}  with $r:=2^{\lengthR_{j_0}-4}\ge 2^{-3}$ and {\bf(WS)}, we have that for $x\in B(x_0, \tfrac{\kappa}{8})$ and $t>0$,
\begin{align}\label{eq:Plg}
\Pp^x(\tau\le t/2, X_{\tau}\in {A_k^i})
&\le \Pp^x(\tau\le t/2)\le \Pp^x(\tau_{B(x,  r \kappa )}\le t/2)\nn\\
&\le\frac{c\, t}{\phi(2^{n_{j_0}}\kappa )}= c\,\fN(\lengthR_{j_0})2^{\lengthR_{j_0}}.
	\end{align}}
\item[(3)]
Since $|x_0^{j}-y_0^{j}|\asymp 2^{\lengthR_j}\kappa$ for $j\in \{j_0, \ldots, d\}$,
we note that
\begin{align}
\prod_{j=j_0}^{d-l}
\Big(\frac{t\phi^{-1}(t)}{|x_0^{j}-y_0^{j}|\phi(|x_0^{j}-y_0^{j}|)}\Big)^q\prod_{j=d-l+1}^d\Big(\frac{t\phi^{-1}(t)}{|x_0^{j}-y_0^{j}|\phi(|x_0^{j}-y_0^{j}|)}\Big)&\asymp \mathbf F_{j_0}(l)\nn\\
\text { where } 
\mathbf F_{j_0}(l):=
\prod_{j=j_0}^{d-l}
\fN(\lengthR_j)^q\prod_{j=d-l+1}^d\fN(\lengthR_j).
\label{eq:prodlg}
\end{align}
\end{itemize}
\end{remark}

\subsection{Estimates for  $\Psi(0)$, $\mathcal{S}(i)$, $i\in\{j_0, \ldots, d\}$ and $\mathcal{T}(i)$, $i\in\{1,\ldots,j_0-1\}$.}
\subsubsection*{\bf Estimate of \texorpdfstring{$\Psi(0)$}{F0}.}
First, we obtain the upper bound of
\[ 
P_{t-\tau}f(z)= \int_{B(y_0, \frac{\kappa}{8})} p(t-\tau,z, y) f(y) dy 
\]
for $z\in A_0^{i}$ and $t/2\le t-\tau\le t$. 
By {(\bf WS)}, we note that 
\begin{align}\label{eq:invphi}
\phi^{-1}(t)\asymp \phi^{-1}(t-\tau),
\end{align}
and $\Big(1\wedge \frac{t\phi^{-1}(t)}{|r|\phi(|r|)}\Big)\le 1$.  
For  $z\in A_0^{i}$ and $y\in B(y_0, \kappa/8)$, by \eqref{eq:zyj0} and \eqref{eq:zyj}, the following holds:  
\begin{align*}
\begin{cases}
|z^j-y^j|\in [0, { a_5} 2^{\lengthR_{j_0}}\kappa)&\mbox{ if } j\in \{1, \ldots, j_0-1\}\backslash \{i\},\\
|z^j-y^j|\in [{a_4} 2^{\lengthR_{j}}\kappa, { a_5} 2^{\lengthR_{j}+1}\kappa)&\mbox{ if } j\in \{j_0, \ldots, d\}\backslash \{i\}\ ,
\end{cases}
\end{align*}
for some $a_4, a_5>0$. 	Let $s_j:=|z^j-y^j|$ for $j\in \{1, \ldots, d\}$. 
We first consider the case $d-l < i$.
  Since 
$\HHq{q}{l}$ (to be precise $\HHq{q}{l}'$) 
yields that for $t/2\le t-\tau\le t$, 
\begin{align*}
p(t-\tau,z, y)
&\le \ c  [\phi^{-1}(t)]^{-d}
\prod_{j\in \{1, \ldots, d-l-1\}}\left(\frac{t\phi^{-1}(t)}{s_j\phi(s_j)}\wedge 1\right)^q
\prod_{j\in \{d-l+1, \ldots, d\}\backslash\{i\}}\left(\frac{t\phi^{-1}(t)}{s_{j}\phi(s_{j})}\wedge 1\right)\nn\\
&\cdot
\left(\frac{t\phi^{-1}(t)}{s_{d-l}\phi(s_{d-l})\wedge s_{i}\phi(s_i)}\wedge 1\right)^q\left(\frac{t\phi^{-1}(t)}{s_{d-l}\phi(s_{d-l})\vee s_{i}\phi(s_i)}\wedge 1\right),
\end{align*}
we have that 
\begin{align}\label{eq:new1}
p(t-\tau,z, y)
&\le c [\phi^{-1}(t)]^{-d}\nn\\
&\cdot\begin{cases}
\prod_{j\in \{j_0,\dots, d-l-1\}}\fN(\lengthR_j)^q
\prod_{j\in \{d-l, \ldots, d\}\backslash\{i\}}\fN(\lengthR_j)
&{\text{ if }  s_i< 2^{2}\kappa\ ,}\\
\prod_{j\in \{j_0,\dots, d-l\}}\fN(\lengthR_j)^q
\prod_{j\in \{d-l+1, \ldots, d\}}\fN(\lengthR_j)
&{ \text{ if }  s_i\ge 2^{\lengthR_i-1}\kappa}.
\end{cases}
\end{align}
Otherwise, $s_i\in[2^2\kappa, 2^{\lengthR_i-1}\kappa)$. In this case, 
there exists a constant $m\in \{1, \ldots, \lengthR_i -1\}$ such that $s_i\asymp 2^m \kappa$ and 
\begin{align}\label{eq:new11}
&p(t-\tau,z, y)
\le  \ c [\phi^{-1}(t)]^{-d}\nn\\
&\cdot\begin{cases}
\prod_{j\in \{j_0,\dots, d-l-1\}}\fN(\lengthR_j)^q{ \fN(m)^q}
\prod_{j\in \{d-l, \ldots, d\}\backslash\{i\}}\fN(\lengthR_j)
&{\text{if }  m<\lengthR_{d-l}\,,}\\
\prod_{j\in \{j_0,\dots, d-l\}}\fN(\lengthR_j)^q
%{ \fN(m)}
\prod_{j\in \{d-l+1, \ldots, d\}\backslash\{i\}}\fN(\lengthR_j){\fN(m)}
&{ \text{if } \lengthR_{d-l}\le m<\lengthR_{i}}.
\end{cases}
\end{align}
We obtain the upper bounds in a similar way
for the rest of  cases: $j_0 \leq i  \le d-l$ and  $i < j_0$.
If  $ i  \le d-l$, $\HHq{q}{l}$ (to be precise $\HHq{q}{l}'$) 
yields that  for $t/2\le t-\tau\le t$, 
\begin{align*}
p(t-\tau,z, y)&
\le\ c  [\phi^{-1}(t)]^{-d}\prod_{j\in \{1, \ldots, d-l\}\backslash\{i\}}
\left(\frac{t\phi^{-1}(t)}{s_j\phi(s_i)}\wedge 1\right)^q
\prod_{j\in \{d-l+2, \ldots, d\}}\left(\frac{t\phi^{-1}(t)}{s_j\phi(s_j)}\wedge 1\right)
\nn\\
&\cdot
\left(\frac{t\phi^{-1}(t)}{s_{d-l+1}\phi(s_{d-l+1})\wedge s_{i}\phi(s_i)}\wedge 1\right)^q\left(\frac{t\phi^{-1}(t)}{s_{d-l+1}\phi(s_{d-l+1})\vee s_{i}\phi(s_i)}\wedge 1\right).
\end{align*}
Therefore, for  $j_0 \leq i  \le d-l$,

\begin{align}
p(t-\tau,z, y)
&\le  \ c [\phi^{-1}(t)]^{-d}\nn\\
&\cdot\begin{cases}
\prod_{j\in \{j_0,\dots, d-l\}\backslash\{i\}}\fN(\lengthR_j)^q
\prod_{j\in \{d-l+1, \ldots, d\}}\fN(\lengthR_j)
&{\text{ if }  s_i< 2^{2}\kappa\ ,}\\
\prod_{j\in \{j_0,\dots, d-l\}}\fN(\lengthR_j)^q
\prod_{j\in \{d-l+1, \ldots, d\}}\fN(\lengthR_j)
&{ \text{ if }  s_i\ge 2^{\lengthR_i-1}\kappa}\ ,
\end{cases}\label{eq:new2}
\end{align}
and otherwise, there exists a constant $m\in \{1, \ldots, \lengthR_i -1\}$ such that $s_i\asymp 2^m \kappa$ and 
\begin{align}\label{eq:new22}
p(t-\tau,z, y)
\le  \ c [\phi^{-1}(t)]^{-d}
\prod_{j\in \{j_0,\dots, d-l\}\backslash\{i\}}\fN(\lengthR_j)^q{ \fN(m)^q}
\prod_{j\in \{d-l+1, \ldots, d\}}\fN(\lengthR_j).
\end{align}
If  $i< j_0 $, we have that
\begin{align}\label{eq:new3}
p(t-\tau,z, y)
\le  \ c [\phi^{-1}(t)]^{-d}
\prod_{j\in \{j_0,\dots, d-l\}}\fN(\lengthR_j)^q
\prod_{j\in \{d-l+1, \ldots, d\}}\fN(\lengthR_j).
\end{align}
Recall that for $l\in \{1, \ldots, d\}$ and $j_0\in \{i_0, \ldots, d-l\}$,
$$\mathbf F_{j_0}(l)=
\prod_{j=j_0}^{d-l}
\fN(\lengthR_j)^q\prod_{j=d-l+1}^d\fN(\lengthR_j).$$
Altogether, 
$\HHq{q}{l}$ yields for $t/2\le t-\tau\le t$ and $z\in A_0^{i}$, 
we obtain the upper bound of $P_{t-\tau} f(z)$ according to the point $z\in A_0^{i}$.
For $z\in A_0^{i}$ with { $i\ge j_0$ and $|z^i-y_0^i|<2\kappa$}, by \eqref{eq:new1} and \eqref{eq:new2}, 
\begin{align}
&P_{t-\tau}f(z)
=\int_{B(y_0, \frac{\kappa}{8})} p(t-\tau,z, y) f(y) dy\nn\\
&\le \, c  [\phi^{-1}(t)]^{-d} \|f\|_1\mathbf F_{j_0}(l)
\begin{cases}
\fN(\lengthR_{d-l})^{-q+1}\fN(\lengthR_i)^{-1}
\1_{\{|z^i-y_0^i|< 2\kappa\}}
&\mbox{if $d-l< i$},\\
\fN(\lengthR_{i})^{-q}
\1_{\{|z^i-y_0^i| < 2\kappa \}}
&\mbox{if $ j_0\le i \le d-l$}.
\end{cases}\label{eq:Hlq0g1}
\end{align} 
If $z\in A_0^{i}$ with { $i\ge j_0$ and $2\kappa\le |z^i-y_0^i|<2^{\lengthR_i}\kappa$}, there exists $m\in \{1, \ldots, \lengthR_i-1\}$ such that $ 2^{m} \kappa\le |z^i-y_0^i|< 2^{m+1} \kappa$.
So \eqref{eq:new11} and \eqref{eq:new22} imply that 
\begin{align}
P_{t-\tau}f(z)
&=\int_{B(y_0, \frac{\kappa}{8})} p(t-\tau,z, y) f(y) dy
\le  c  [\phi^{-1}(t)]^{-d} \|f\|_1\nn\\
&\cdot \mathbf F_{j_0}(l)
\begin{cases}
\fN(m)^q\fN(\lengthR_{d-l})^{-q+1}\fN(\lengthR_i)^{-1}
&\mbox{if $d-l< i$ and $m<\lengthR_{d-l}$}\,,\\
\fN(m)\fN(\lengthR_{i})^{-1}
&\mbox{if $ d-l< i$ and $\lengthR_{d-l}\le m<\lengthR_{i}$}\,,\\
\fN(m)^q\fN(\lengthR_{i})^{-q}
&\mbox{if $ j_0\le i\le d-l$}\,.
\end{cases}\label{eq:Hlq0g2}
\end{align}
For the rest of cases, that is, if $z\in A_0^{i}$ with $i\ge j_0$ and $|z^i-y_0^i|\ge 2^{\lengthR_i}\kappa$, or with $i< j_0$,
\eqref{eq:new1}, \eqref{eq:new2}  and \eqref{eq:new3} imply that
\begin{align}
P_{t-\tau}f(z)=\int_{B(y_0, \frac{\kappa}{8})} p(t-\tau,z, y)  f(y) dy
\le  c  [\phi^{-1}(t)]^{-d} \|f\|_1\cdot \mathbf F_{j_0}(l)\,.\label{eq:Hlq0g3}
\end{align}
Let us estimate $\Psi(0)$. 
By \eqref{eq:LSd}, \eqref{eq:LSIg0} and \eqref{eq:Hlq0g1}, we first observe that
\begin{align}\label{eq:s01g}
&\E^{x}\left[\1_{\{\tau\le t/2\}}\1_{\{X_{\tau}\in A_0^{i}, i\ge j_0, { |X_\tau^i-y_0^i|< 2\kappa}\}}P_{t-\tau}f(X_{\tau})\right]\nn\\
\le\,&c  [\phi^{-1}(t)]^{-d} \|f\|_1 \mathbf F_{j_0}(l)
\begin{cases}
\fN(\lengthR_{d-l})^{-q+1}
&\mbox{  if $ d-l< i$}\,,
\\
\fN(\lengthR_i)^{-q+1}
&\mbox{  if $ j_0\le i\le d-l$}
\end{cases}\nn\\
\le\,& c  [\phi^{-1}(t)]^{-d} \|f\|_1 \mathbf F_{j_0+1}(l) 
\begin{cases}
\fN(\lengthR_{j_0})
\qquad\quad&\mbox{  if $ d-l< i$}\,,\\
\fN(\lengthR_{j_0})
\qquad\quad&\mbox{  if $ j_0\le i\le d-l$}.
\end{cases}
\end{align}
The last inequality holds since $q\le 1$ and $\delta\to \fN(\delta)$ is decreasing.
Note that
$2^m\le { c}\fN(m)^{-\frac{1}{1+\newa}}$ by \eqref{eq:newmm2}.
Then \eqref{eq:LSd}, \eqref{eq:LSIg0} and \eqref{eq:Hlq0g2} imply that  for any $m\in \{1, \ldots,\lengthR_i-1\}$,
\begin{align*}
%\label{eq:s01g2}
&\E^{x}\left[\1_{\{\tau\le t/2\}}\1_{\{X_{\tau}\in A_0^{i}, i\ge j_0, { 2^{ m-1}\kappa\le |X_\tau^i-y_0^i|\le 2^{m+1}\kappa}\}}P_{t-\tau}f(X_{\tau})\right]\\
\le\,&c  [\phi^{-1}(t)]^{-d} \|f\|_1 \mathbf F_{j_0}(l)
\begin{cases}
\fN(m)^{q-\frac{1}{1+\newa}}\fN(\lengthR_{d-l})^{-q+1}
&\mbox{if $ d-l< i$ and $m<\lengthR_{d-l}$}\,,\\
\fN(m)^{1-\frac{1}{1+\newa}}
&\mbox{if $ d-l< i$ and $\lengthR_{d-l}\le m<\lengthR_{i}$}\,,\\
\fN(m)^{q-\frac{1}{1+\newa}}\fN(\lengthR_{i})^{-q+1}
&\mbox{if $ j_0\le i \le d-l$}\,.
\end{cases}\nn\\
\le\,&c  [\phi^{-1}(t)]^{-d} \|f\|_1 \mathbf F_{j_0+1}(l)
\begin{cases}
\fN(m)^{q-\frac{1}{1+\newa}}\fN(\lengthR_{j_0})^q\fN(\lengthR_{d-l})^{-q+1}
&\mbox{if $d-l< i$ and $m<\lengthR_{d-l}$}\,,\\
\fN(\lengthR_{j_0})^{\newb+q}
&\mbox{if $d-l< i$ and $\lengthR_{d-l}\le m<\lengthR_{i}$}\,,\\
\fN(m)^{q-\frac{1}{1+\newa}}\fN(\lengthR_{j_0})^q\fN(\lengthR_{i})^{-q+1}
&\mbox{if $ j_0\le i \le d-l$}\,.
\end{cases}\nn
\end{align*}
Since 
$\fN(m)^{q-\frac{1}{1+\newa}}\le 1 $ for $q>\frac{1}{1+\newa}$ and
$\fN(m)^{q-\frac{1}{1+\newa}}\le \fN(\lengthR_{i}\wedge \lengthR_{d-l})^{q-\frac{1}{1+\newa}}$ for $q<\frac{1}{1+\newa}$ and $m<\lengthR_{i}\wedge \lengthR_{d-l}$, 
\begin{align}\label{eq:s01g2}
&\E^{x}\left[\1_{\{\tau\le t/2\}}\1_{\{X_{\tau}\in A_0^{i}, { i\ge j_0, 2^{ m-1}\kappa\le |X_\tau^i-y_0^i|< 2^{m+1}\kappa}\}}P_{t-\tau}f(X_{\tau})\right]\nn\\
&\quad\le\, c {[\phi^{-1}(t)]^d} \|f\|_1 \mathbf F_{j_0+1}(l) 
\begin{cases}
\fN(\lengthR_{j_0})^{\newb+q}&\mbox{ if } q\in \big[0, \frac{1}{1+\newa}\big)\,,\\
\fN(\lengthR_{j_0})&\mbox{ if }  q\in(\frac{1}{1+\newa},1).
\end{cases} 
\end{align}
{ For the rest of cases,}  by \eqref{eq:newmm2}, \eqref{eq:Plg} and \eqref{eq:Hlq0g3}, we have that 
\begin{align}\label{eq:s02g}
&\E^{x}\left[\1_{\{\tau\le t/2\}}\1_{\{X_{\tau}\in A_0^{i}, i\ge j_0, |X_\tau^i-y_0^i|\ge 2^{\lengthR_i}\}\bigcup\{X_{\tau}\in A_0^{i}, { i< j_0} \}}\ P_{t-\tau}f(X_{\tau})\right]\\
%+\E^{x}\left[\1_{\{\tau\le t/2\}}P_{t-\tau}f(X_{\tau})\right]\nn\\
\le\, & c  [\phi^{-1}(t)]^{-d} \|f\|_1 \mathbf F_{j_0}(l)
\fN(\lengthR_{j_0})
2^{\lengthR_{j_0}}\le\, c  [\phi^{-1}(t)]^{-d} \|f\|_1 \mathbf F_{j_0+1}(l) \cdot \fN(\lengthR_{j_0})^{\newb+q}.\nn
\end{align} 
Therefore, by \eqref{eq:s01g}--\eqref{eq:s02g}, we obtain 
\begin{align}\label{eq:S0g}     
\Psi(0)
=\sum_{i=1}^d\Psi^i(0)
\le c  [\phi^{-1}(t)]^{-d} \|f\|_1 
\mathbf F_{j_0+1}(l)\begin{cases}                             
\fN(\lengthR_{j_0})^{\newb+q}&\mbox{if } q\in \big[0, \frac{1}{1+\newa}\big)\,,\\
\fN(\lengthR_{j_0})&\mbox{if } q\in(\frac{1}{1+\newa},1).
\end{cases} 
\end{align}
\subsection*{\bf Estimates of \texorpdfstring{$\mathcal{S}(i):=\sum_{k=\TT{j_0}{d}-\lengthR_i}^{\infty}\Psi^i(k)$ for $i\in \{j_0, \ldots, d\}$}{Si}.}

Let $l\in\{0,1,\ldots, d-1\}$, $i_0\in\{1,\ldots, d-l\}$ and $j_0\in\{i_0,\ldots, d-l\}$. 
For $z\in {A_k^i}$ and $y\in B(y_0, \kappa/8)$, 
let $s_j:=|z^j-y^j|$ for $j\in \{1, \ldots, d\}$, and by \eqref{eq:zyjg} and \eqref{eq:zyj}, we note that
\begin{align}\label{c:sS}
s_j\asymp 2^{\gamma^j}\kappa\text{ for }j\in \{1, \ldots, j_0-1\}\cup\{i\}\text{  and } s_j\asymp 2^{\lengthR_{j}}\kappa\text{ for }j\in \{j_0, \ldots, d\}\backslash\{i\}
\end{align}

The index $\gamma=(\gamma^1,\cdots,\gamma^d)\in \N_0^d$ of $A_{k, \gamma}$ is determined when the jump occurs at $\tau$ from the ball $B(x_0, s(j_0))$ and it is independent of the choice of the elements $z\in A_k^i$ and $y\in B(y_0, \kappa/8)$, as mentioned in  \autoref{rem:add10-4}.
Hence, $\HHq{q}{l}$, together with \eqref{eq:invphi} and \eqref{c:sS}, yields that  for $t/2\le t-\tau\le t$ and $i\le d-l$,
\begin{align}\label{eq:Hlqg1}
&p(t-\tau,z, y)
\le c [\phi^{-1}(t)]^{-d}
\prod_{j\in \{1, 2, \ldots, d-l\}}\Big(\frac{t\phi^{-1}(t)}{s_j\phi(s_j)}\Big)^q
\prod_{j\in \{d-l+1, \ldots, d\}}\Big(\frac{t\phi^{-1}(t)}{s_j\phi(s_j)}\Big)\nn\\
&\le c [\phi^{-1}(t)]^{-d}
\prod_{j\in \{1, \ldots, j_0-1\}\cup\{i\}}
\Big(\frac{\phi(\kappa)}{2^{\gamma^j}\, \phi(2^{\gamma^j}\kappa)}\Big)^q
\prod_{j\in \{j_0, \ldots, d-l\}\backslash\{i\}}\Big(\frac{\phi(\kappa)}{2^{\lengthR_j}\, \phi(2^{\lengthR_j}\kappa)}\Big)^q\nn\\
&\qquad\qquad\cdot\prod_{j\in \{d-l+1, \ldots, d\}}\Big(\frac{\phi(\kappa)}{2^{\lengthR_j}\, \phi(2^{\lengthR_j}\kappa)}\Big)\nn\\
&=c [\phi^{-1}(t)]^{-d}
\prod_{j\in \{1, \ldots, j_0-1\}\cup\{i\}}\fN(\gamma^j)^q
\prod_{j\in \{j_0, \ldots, d-l\}\backslash\{i\}}\fN(\lengthR_j)^q
\prod_{j\in \{d-l+1, \ldots, d\}}\fN(\lengthR_j).
\end{align}
Similarly, for $t/2\le t-\tau\le t$, the case $i > {d-l}$ can be dealt with accordingly,
\begin{align*}
p(t-\tau,z, y)
\le c &[\phi^{-1}(t)]^{-d}
\prod_{j\in \{1, \ldots, j_0-1\}}\fN(\gamma^j)^q
\prod_{j\in \{j_0, \ldots, d-l\}}\fN(\lengthR_j)^q\nn\\
&\cdot\fN(\gamma^i)\prod_{j\in \{d-l+1, \ldots, d\}\backslash\{i\}}\fN(\lengthR_j).
\end{align*}
Since $\delta\to \fN(\delta)$ is decreasing, 
for $t/2\le t-\tau\le t$ and $z\in {A_k^i}\cap A_{k, \gamma}$, we have that 
\begin{align}
&P_{t-\tau}f(z)=\int_{B(y_0, \frac{\kappa}{8})} p(t-\tau,z, y) f(y) dy\nn\\
&\le c [\phi^{-1}(t)]^{-d} \|f\|_1\cdot \mathbf F_{j_0}(l)
\prod_{j\in \{1, \ldots, j_0-1\}}\fN(\gamma^j)^q
\begin{cases}     
\fN(\gamma^i)^q
\fN(\lengthR_i)^{-q}               
&\mbox{ if $\gamma^i< \lengthR_{i}$,}\\
\fN(\gamma^i)
\fN(\lengthR_i)^{-1}
&\mbox{  if	$\gamma^i\ge \lengthR_{i}$}.
\end{cases}\label{eq:Hlqg}
\end{align}
The last inequality is due to $q<1$ and the fact that $\delta\to\fN(\delta)$ is decreasing.
%the fact 
% (note that 

For any  $a, b\in \N_0$, we define
\begin{align*}
\gam{a}{b}:=\gj{a}{b} \text{ if }a\le b,\text{ and otherwise } \gam{a}{b}:=0\,. 
\end{align*}
Recall the definition of $A_k$ so that $k=\sum_{j=1}^d \gamma^j$ and for $z\in {A_k^i}$, there exists $\gamma\in \N_0^d$ such that $z\in A_{k, \gamma}$ with  $\gamma^j=\lengthR_j \mbox{ or }\lengthR_j+1$ for $j\in\{j_0, \ldots, d\}\backslash\{i\}$  (see, \autoref{lem:jAn}).
Therefore,  
\begin{align*}
\gam{1}{j_0-1}+\TT{j_0}{d}-\lengthR_i+\gamma^i\le k\le \gam{1}{j_0-1}+\TT{j_0}{d}-\lengthR_i+\gamma^i+d,
\end{align*}
and so that
\begin{align}
\label{eq:gig}
\gam{1}{j_0-1}+\gamma^i-\lengthR_i\ge k-\TT{j_0}{d} -d
\qquad \text{ and } \qquad\gamma^i-\lengthR_i\le k-\TT{j_0}{d}.
\end{align}
Now decompose $\mathcal{S}(i)$ as follows:
\begin{align*}
\mathcal{S}(i)
=
\sum_{k= \TT{j_0}{d}-\lengthR_i}^{\infty} 
\Psi^i(k)\1_{\{\gamma^i< \lengthR_i\}}+\sum_{k= \TT{j_0}{d}-\lengthR_i}^{\infty} 
\Psi^i(k)\1_{\{\gamma^i\ge \lengthR_i\}}
=:\I+\II.
\end{align*}
For $\I$, by \eqref{eq:LSd}, \eqref{eq:LSIg} and \eqref{eq:Hlqg}, 
\begin{align}\label{eq:s11}
&\E^{x}\left[\1_{\{\tau\ge t/2\}}\1_{\{X_{\tau}\in { A_k^i}\}}P_{t-\tau}f(X_{\tau})\right]\1_{\{\gamma^i< \lengthR_i\}}
\le \,c [\phi^{-1}(t)]^{-d}\|f\|_1 \mathbf F_{j_0}(l)\nn\\
&\cdot\prod_{j\in \{1, \ldots, j_0-1\}}\fN(\gamma^j)^q\ 
\fN(\gamma^i)^q\fN(\lengthR_i)^{-q} \E^x\left[\int_{0}^{t/2\wedge \tau}\int_{I_k^i}
\frac{\1_{\{\gamma^i<\lengthR_i\}}}{|X_s^i-z^i|\phi(|X_s^i-z^i|)}dz^ids\right]\nn\\
\le \,&c [\phi^{-1}(t)]^{-d} \|f\|_1 \mathbf F_{j_0}(l)               
\prod_{j\in \{1, \ldots, j_0-1\}}\fN(\gamma^j)^q\fN(\gamma^i)^q\fN(\lengthR_i)^{1-q} 2^{\gamma^i}.
\end{align}
By \eqref{eq:newmm2} and \eqref{eq:gig}, we have that
\begin{align*}
\prod_{j\in \{1, \ldots, j_0-1\}}\fN(\gamma^j)^q\fN(\gamma^i)^q\fN(\lengthR_i)^{1-q} 2^{\gamma^i}
&\le c 2^{-(\gam{1}{j_0-1}+\gamma^i)(\la+1)q}\cdot 2^{\gamma^i}\fN(\lengthR_i)^{1-q}\\
&\le c 2^{-(k-\TT{j_0}{d}+\lengthR_i)(\la+1)q}2^{(k-\TT{j_0}{d}+\lengthR_i)}\fN(\lengthR_i)^{1-q}
\\
&=c 2^{-(k-\TT{j_0}{d}+\lengthR_i)((\la+1)q-1)}\fN(\lengthR_i)^{1-q}
\end{align*}
where
\begin{align}\label{eq:s1g1}
&\sum_{k= \TT{j_0}{d}-\lengthR_i}^{\TT{j_0}{d}-1}
 2^{-(k-\TT{j_0}{d}+\lengthR_i)((\la+1)q-1)}\ \fN(\lengthR_i)^{1-q}\nn\\
&\le  c
\begin{cases}
2^{\lengthR_i(1-(\la+1)q)}\fN(\lengthR_i)^{1-q}
&\qquad\mbox{ if } q\in [0, \frac{1}{1+\newa})\\
\fN(\lengthR_i)^{1-q}
&\qquad\mbox{ if } q\in (\frac{1}{1+\newa}, 1)
\end{cases}\nn\\
&\le  c
\begin{cases}
\fN(\lengthR_i)^{-(\la+1)^{-1}(1-(\la+1)q)}
\cdot \fN(\lengthR_i)^{1-q}=\fN(\lengthR_i)^{\frac{\la}{\la+1}}
&\mbox{ if } q\in [0, \frac{1}{1+\newa}),\\
\fN(\lengthR_i)^{1-q}
&\mbox{ if } q\in (\frac{1}{1+\newa}, 1).
\end{cases}
\end{align}
Similarly, 
by \eqref{eq:newmm2}, \eqref{eq:gig}  and with the fact that $\gamma^i=\lengthR_i$ or $\lengthR_i+1$, we have that 
\begin{align*}
\prod_{j\in \{1, \ldots, j_0-1\}}\fN(\gamma^j)^q
\fN(\gamma^i)^q\fN(\lengthR_i)^{1-q} 2^{\gamma^i}
&\le c 2^{-(\gam{1}{j_0-1}+\gamma^i)(\la+1)q}\cdot 2^{\gamma^i}\fN(\lengthR_i)^{1-q}\\
&\le c2^{-(k-\TT{j_0}{d}+\lengthR_i)(\la+1)q}2^{\lengthR_i}\fN(\lengthR_i)^{1-q},
\end{align*}
where 
\begin{align}\label{eq:s1g2}
&\sum_{k= \TT{j_0}{d}}^{\infty}
 2^{-(k-\TT{j_0}{d}+\lengthR_i)(\la+1)q}2^{\lengthR_i}\fN(\lengthR_i)^{1-q}
\nn\\
&\le  c
\begin{cases}
\fN(\lengthR_i)^{-(\la+1)^{-1}(1-(\la+1)q)}
\cdot \fN(\lengthR_i)^{1-q}=\fN(\lengthR_i)^{\frac{\la}{\la+1}}
&\mbox{ if } q\in [0, \frac{1}{1+\newa}),\\
\fN(\lengthR_i)^{1-q}
&\mbox{ if } q\in (\frac{1}{1+\newa}, 1).
\end{cases}
\end{align}
Therefore, by \eqref{eq:s11}, \eqref{eq:s1g1} and \eqref{eq:s1g2}, we conclude that 
\begin{align}\label{eq:s1g}
\I
&\le  \Big(\sum_{k= \TT{j_0}{d}-\lengthR_i}^{\TT{j_0}{d}-1}+\sum_{k= \TT{j_0}{d}}^{\infty}\Big)\E^{x}\left[\1_{\{\tau\ge t/2\}}\1_{\{X_{\tau}\in { A_k^i}\}}P_{t-\tau}f(X_{\tau})\right]\1_{\{\gamma^i< \lengthR_i\}}\nn\\
&\le\, c [\phi^{-1}(t)]^{-d} \|f\|_1 \mathbf F_{j_0}(l)
\begin{cases}
\fN(\lengthR_i)^{\frac{\la}{\la+1}}
&\mbox{ if } q\in [0, \frac{1}{1+\newa}),\\
\fN(\lengthR_i)^{1-q}
&\mbox{ if } q\in (\frac{1}{1+\newa}, 1).
\end{cases}
\end{align}
Now we estimate $\II$.
 Note that
$\fN(\gamma^i)\fN(\lengthR_i)^{-1}\le \fN(\gamma^i)^q\fN(\lengthR_i)^{-q}$ for $\gamma^i\ge \lengthR_i$ and $q<1$.
By  \eqref{eq:LSd}, \eqref{eq:Plg} and \eqref{eq:Hlqg}, we have that 
\begin{align*}
&\E^{x}\left[\1_{\{\tau\ge t/2\}}\1_{\{X_{\tau}\in { A_k^i}\}}P_{t-\tau}f(X_{\tau})\right]\1_{\{\gamma^i\ge  \lengthR_i\}}\nn\\
\le \,&c [\phi^{-1}(t)]^{-d}\|f\|_1 \mathbf F_{j_0}(l)
\prod_{j\in \{1, \ldots, j_0-1\}}\fN(\gamma^j)^q
\fN(\gamma^i)^q\fN(\lengthR_i)^{-q}
\cdot \Pp^x(\tau\le t/2, X_{\tau}\in {A_k^i})\1_{\{\gamma^i\ge \lengthR_i\}}\nn\\
\le \,&c [\phi^{-1}(t)]^{-d}\|f\|_1 \mathbf F_{j_0}(l)
\prod_{j\in \{1, \ldots, j_0-1\}}\fN(\gamma^j)^q\fN(\gamma^i)^q\fN(\lengthR_i)^{-q}\fN(\lengthR_{j_0})2^{\lengthR_{j_0}}.
\end{align*}
By \eqref{eq:newmm2} and \eqref{eq:gig}, we have that
\begin{align*}
\prod_{j\in \{1, \ldots, j_0-1\}}\fN(\gamma^j)^q\fN(\gamma^i)^q\fN(\lengthR_i)^{-q}\fN(\lengthR_{j_0})2^{\lengthR_{j_0}}
&\le c 2^{-(\gam{1}{j_0-1}+\gamma^i)(\la+1)q}\cdot\fN(\lengthR_i)^{-q} \fN(\lengthR_{j_0})2^{\lengthR_{j_0}}\\
&\le c 2^{-(k-\TT{j_0}{d}+\lengthR_i)(\la+1)q}\fN(\lengthR_i)^{-q} \fN(\lengthR_{j_0})2^{\lengthR_{j_0}},
\end{align*}
where
\begin{align*}
&\sum_{k= \TT{j_0}{d}}^{\infty}
2^{-(k-\TT{j_0}{d}+\lengthR_i)(\la+1)q}\fN(\lengthR_i)^{-q} \fN(\lengthR_{j_0})2^{\lengthR_{j_0}}\nn\\
&\le c\, 2^{-\lengthR_i(\la+1)q}\fN(\lengthR_i)^{-q} \fN(\lengthR_{j_0})2^{\lengthR_{j_0}}\nn\\
&\le c 2^{\lengthR_{j_0}(1-(\la+1)q)}\fN(\lengthR_i)^{-q} \fN(\lengthR_{j_0})\nn\\
&\le c 
\begin{cases}
\fN(\lengthR_{j_0})^{-(\la+1)^{-1}(1-(\la+1)q)}\fN(\lengthR_i)^{-q} \fN(\lengthR_{j_0})\le \fN(\lengthR_{j_0})^{\frac{\la}{\la+1}}&\mbox{ if } q\in [0, \frac{1}{1+\newa}),\\
\fN(\lengthR_i)^{-q} \fN(\lengthR_{j_0})\le \fN(\lengthR_i)^{1-q}
&\mbox{ if } q\in (\frac{1}{1+\newa}, 1).
\end{cases}
\end{align*}
 For the last inequality, we have used the fact that $\fN(\lengthR_{j_0})\le \fN(\lengthR_{i})$.
 Note that for $\gamma^i\ge \lengthR_i$, $k\ge\TT{j_0}{d}$ by \eqref{eq:gig}.
Therefore, 
\begin{align}\label{eq:s2g}
\II\le\, c [\phi^{-1}(t)]^{-d} \|f\|_1 \mathbf F_{j_0}(l)
\begin{cases}
\fN(\lengthR_{j_0})^{\frac{\la}{\la+1}}
&\mbox{ if } q\in [0, \frac{1}{1+\newa}),\\
\fN(\lengthR_i)^{1-q}
&\mbox{ if } q\in (\frac{1}{1+\newa}, 1).
\end{cases}
\end{align}
Since $j\to  \lengthR_j$ is increasing and $q< 1$,  \eqref{eq:s1g} and \eqref{eq:s2g} imply that for any $i\in \{ j_0,\ldots,  d\}$, 
\begin{align}\label{eq:Sig}
\mathcal{S}(i)\le \,&c[\phi^{-1}(t)]^{-d}\|f\|_1   \mathbf F_{j_0}(l)
\begin{cases}
\fN(\lengthR_{j_0})^{\frac{\la}{\la+1}}&\mbox{ if } q\in [0, \frac{1}{1+\newa})\\
\fN(\lengthR_i)^{1-q}
&\mbox{ if } q\in (\frac{1}{1+\newa},1)
\end{cases}\nn\\
\le \,&c[\phi^{-1}(t)]^{-d}\|f\|_1   \mathbf F_{j_0+1}(l)
\begin{cases}
\fN(\lengthR_{j_0})^{\newb+q}&\mbox{ if } q\in [0, \frac{1}{1+\newa})\,,\\
\fN(\lengthR_{j_0})&\mbox{ if } q\in (\frac{1}{1+\newa},1).
\end{cases}
\end{align}
\subsubsection*{\bf Estimates of \texorpdfstring{$\mathcal{T}(i)$ for $i\in\{1,2,\ldots,j_0-1\}$}{Ti}.} 

Let $l\in\{0,1,\ldots, d-1\}$, $i_0\in\{1,\ldots, d-l\}$ and  $j_0\in\{i_0,\ldots, d-l\}$.
 Similar to the proof of \eqref{eq:Hlqg1}, by \eqref{eq:zyjg}--\eqref{eq:zyj} together with 
$\HHq{q}{l}$ (to be precise $\HHq{q}{l}'$), we have that for $t/2\le t-\tau\le t$, $y\in B(y_0, \frac{\kappa}{8})$ and $z\in {A_k^i}$,
\begin{align*}
p(t-\tau,z, y)&\le c [\phi^{-1}(t)]^{-d}
\prod_{j\in \{1, \ldots, j_0-1\}}\fN(\gamma^j)^q
\prod_{j\in \{j_0, \ldots, d-l\}}\fN(\lengthR_j)^q
\prod_{j\in \{d-l+1, \ldots, d\}}\fN(\lengthR_j).
\end{align*}
Hence
\begin{align}\label{eq:Hlq1g}
P_{t-\tau}f(z)&=\int_{B(y_0, \frac{\kappa}{8})} p(t-\tau,z, y) f(y) dy
\le c [\phi^{-1}(t)]^{-d} \|f\|_1 \mathbf F_{j_0}(l)  \prod_{j\in \{1, \ldots, j_0-1\}}\fN(\gamma^j)^q,
\end{align} 
where $\mathbf F_{j_0}(l)$	is defined in \eqref{eq:prodlg}. Regarding the definition of $A_k$, recall $k=\sum_{j=1}^d \gamma^j$.
For $z\in { A_k^i}$, there exists $\gamma\in \N_0^d$ such that $z\in A_{k, \gamma}$ with  $\gamma^j=\lengthR_j \mbox{ or }\lengthR_j+1$ for $j\in\{j_0, \ldots, d\}$  (see \autoref{lem:jAn}),
hence,
\begin{align*}
\gam{1}{j_0-1}+\TT{j_0}{d}\le	k\le\gam{1}{j_0-1}+\TT{j_0}{d}+d.
\end{align*}
By \eqref{eq:newmm2}, since
\begin{align*}
\prod_{j\in \{1, \ldots, j_0-1\}}\fN(\gamma^j)^q&
\le c 2^{-\gam{1}{j_0-1}(\la+1)q}\le c2^{ -(k-\TT{j_0}{d})(\la+1)q},
\end{align*}
combining \eqref{eq:Plg} and \eqref{eq:Hlq1g}, we have that 
\begin{align}\label{eq:Tig}
\mathcal{T}(i)&=\sum_{k\ge\TT{j_0}{d}}\E^{x}\left[\1_{\{\tau\le t/2\}}\1_{\{X_{\tau}\in {A_k^i}\}}P_{t-\tau}f(X_{\tau})\right]\nn\\
&\le c [\phi^{-1}(t)]^{-d} \|f\|_1
\mathbf F_{j_0}(l)\fN(\lengthR_{j_0})2^{\lengthR_{j_0}}\sum_{k\ge \TT{j_0}{d}}c2^{-(k-\TT{j_0}{d})(\la+1)q}\nn\\
&\le c [\phi^{-1}(t)]^{-d} \|f\|_1
\mathbf F_{j_0}(l) \fN(\lengthR_{j_0})2^{\lengthR_{j_0}}\nn\\
&\le c [\phi^{-1}(t)]^{-d} \|f\|_1
\mathbf F_{j_0+1}(l) \fN(\lengthR_{j_0})^{\newb+q}.
\end{align}
The last inequality holds by \eqref{eq:newmm2} with $\lengthR_{j_0}\ge 1$.
\subsubsection*{\bf Conclusion} 
Finally, by the estimates \eqref{eq:S0g}, \eqref{eq:Sig} and \eqref{eq:Tig} in the representation \eqref{eq:main},
we obtain the upper bound of \eqref{eq:main} as follows:
\begin{align*}
&\E^{x}\left[\1_{\{\tau\le t/2\}}P_{t-\tau}f(X_{\tau})\right]
=\sum_{i=1}^{j_0-1}\mathcal{T}(i)+\sum_{i=j_0}^{d}\mathcal{S}(i)+\Phi(0)\nn\\
&\le c [\phi^{-1}(t)]^{-d} \|f\|_1     \mathbf F_{j_0+1}(l)
\begin{cases}
\fN(\lengthR_{j_0})^{\newb+q}&\mbox{ if } q\in [0, \frac{1}{1+\newa})\,,\\
\fN(\lengthR_{j_0})&\mbox{ if } q\in (\frac{1}{1+\newa},1).
\end{cases}
\end{align*}
This proves \autoref{prop:main} by {(\bf WS)}, \eqref{d:thi_Ri}, \eqref{eq:newmm} and  \autoref{rem:prel} (3). 
\fop

%\bibliography{LK-bib}{}
%\bibliographystyle{alpha}

\makeatletter
\providecommand
\@dotsep{5}
\makeatother
%\listoftodos
\relax
	
\end{document}